\documentclass[a4paper,centertags,oneside,12pt]{amsart}
\usepackage{mathrsfs}
\usepackage{appendix}
\usepackage{amssymb}
\usepackage{typearea}
\usepackage{pdfsync}
\usepackage{dsfont}
\usepackage{mathrsfs}
\usepackage[a4paper,top=1in,bottom=1in,left=1in,right=1in]{geometry}
\usepackage{hyperref}

\makeatletter
      \def\@setcopyright{}
      \def\serieslogo@{}
      \makeatother

\newcommand{\Complex}{\mathbb C}
\newcommand{\Real}{\mathbb R}
\newcommand{\ddbar}{\overline\partial}
\newcommand{\pr}{\partial}
\newcommand{\ol}{\overline}
\newcommand{\ov}{\overline}
\newcommand{\Td}{\widetilde}
\newcommand{\norm}[1]{\left\Vert#1\right\Vert}
\newcommand{\abs}[1]{\left\vert#1\right\vert}

\newcommand{\set}[1]{\left\{#1\right\}}
\newcommand{\To}{\rightarrow}
\DeclareMathOperator{\Ker}{Ker}

\DeclareMathOperator{\End}{End}

\newcommand{\field}[1]{\mathbb{#1}}

\newcommand{\C}{\field{C}}
\newcommand{\N}{\field{N}}

\newcommand{\cali}[1]{\mathscr{#1}}
\newcommand{\cC}{\cali{C}} 
\newcommand{\cO}{\cali{O}} \newcommand{\cE}{\cali{E}}
\newcommand{\cH}{\cali{H}}

\newcommand{\calig}[1]{\mathcal{#1}}
\newcommand\mO{\calig{O}}
\theoremstyle{plain}
\newtheorem{thm}{Theorem}[section]
\newtheorem{cor}[thm]{Corollary}

\newtheorem{lem}[thm]{Lemma}

\theoremstyle{definition}
\newtheorem{defn}[thm]{Definition}

\theoremstyle{remark}
\newtheorem{rem}[thm]{Remark}

\numberwithin{equation}{section}




\begin{document}
\title[]{Berezin-Toeplitz quantization for lower energy forms}
\author[]{Chin-Yu Hsiao}
\address{Institute of Mathematics, Academia Sinica  and National Center for Theoretical Sciences, 
6F, Astronomy-Mathematics Building, No.1, Sec.4, Roosevelt Road, Taipei 10617, Taiwan}
\thanks{The first author was supported by Taiwan Ministry of Science of Technology project 
104-2628-M-001-003-MY2,  the Golden-Jade fellowship of Kenda Foundation, 
Academia Sinica Career Development Award, and partially
funded through the Institutional Strategy of 
the University of Cologne within the German Excellence Initiative}
\email{chsiao@math.sinica.edu.tw or chinyu.hsiao@gmail.com} 
\author[]{George Marinescu}
\address{Universit{\"a}t zu K{\"o}ln,  Mathematisches Institut,
    Weyertal 86-90,   50931 K{\"o}ln, Germany
    \newline
    \mbox{\quad}\,Institute of Mathematics `Simion Stoilow', Romanian Academy,
Bucharest, Romania}\thanks{Second author partially supported by DFG funded 
projects SFB/TR 12, MA 2469/2-2, SFB TRR 191}
\email{gmarines@math.uni-koeln.de}
\keywords{Berezin-Toeplitz quantization,  
positive line bundle, Kodaira Laplace operator,
local holomorphic Morse inequalities, spectral asymptotics, Melin-Sj\"ostrand stationary
phase method}

\begin{abstract}
Let $M$ be an arbitrary complex manifold and let $L$ be a Hermitian
holomorphic line bundle over $M$. 
We introduce the Berezin-Toeplitz quantization of the open set of $M$
where the curvature on $L$ is non-degenerate. In particular, we quantize 
any manifold admitting a positive line bundle.
The quantum spaces 
are the spectral spaces corresponding to $\left[0,k^{-N}\right]$, where $N>1$ is fixed, 
of the Kodaira Laplace operator acting on forms with values
in tensor powers $L^k$. We establish the asymptotic expansion of 
associated Toeplitz operators and their composition in the semiclassical limit $k\to\infty$ and we
define the corresponding star-product. 
If the Kodaira Laplace operator has a certain spectral gap this method
yields quantization by means of harmonic forms.
As applications, we obtain the 
Berezin-Toeplitz quantization for semi-positive and big line bundles. 
\end{abstract}

\maketitle \tableofcontents
\section{Introduction and statement of the main results}\label{s-intro}

The aim of the geometric quantization theory of Kostant and Souriau 
is to relate the classical observables 
(smooth functions) on a phase space (a symplectic manifold) to 
the quantum observables (bounded linear operators) on the quantum space 
(sections of a line bundle). 
Berezin-Toeplitz
quantization is a particularly efficient version of the geometric quantization theory
\cite{BFFLS,Berez:74,Fedo:96,Kos:70,Ma10,Sou:70}.
Toeplitz operators and more generally Toeplitz structures were introduced in 
geometric quantization by
Berezin \cite{Berez:74} and Boutet de Monvel-Guillemin \cite{BouGu81}.
We refer to \cite{Ma10,MM11,Schlich:10} for reviews of Berezin-Toeplitz quantization.

The setting of Berezin-Toeplitz quantization
on  K{\"a}hler manifolds is the following. 
Let $(M,\omega, J)$ be a K{\"a}hler manifold of
$\dim_{\C}M=n$ with K{\"a}hler form $\omega$ and complex structure $J$.
Let $(L, h)$ be a holomorphic Hermitian line bundle on $X$, and
let $\nabla ^L$ be the holomorphic Hermitian
connection on $(L,h)$ with curvature
 $R^L=(\nabla^L)^2$.
 We assume that $(L,h,\nabla^L)$ is a prequantum line bundle, i.e.,
 \begin{align} \label{toe2.1}
     \omega= \frac{\sqrt{-1}}{2 \pi} R^L.
\end{align}
Let $g^{TM}:=\omega(\cdot,J\cdot)$ be the $J$-Riemannian 
metric on $TM$. The Riemannian volume form of $g^{TM}$ is denoted by $dv_M$.
On the space of smooth sections with compact support $\cC^{\infty}_0(M,L^k)$ 
we introduce the $L^2$-scalar product associated to the metrics $h$ and
the Riemannian volume form $dv_M$ by
\begin{equation}\label{lm2.0}
\big\langle s_1,s_2 \big\rangle =\int_M\big\langle s_1(x),
s_2(x)\big\rangle_{h^k}\,dv_M(x)\,.
\end{equation}
The completion of $\cC^{\infty}_0(M,L^k)$ with respect to \eqref{lm2.0} is denoted as 
usual by $L^{2}(M,L^k)$. We denote by $H^{0}_{(2)}(M,L^k)$ the closed subspace of $L^{2}(M,L^k)$
consisting of holomorphic sections. The Bergman projection is the orthogonal projection
$P_{k}:L^{2}(M,L^k)\to H^{0}_{(2)}(M,L^k)$\,.
For a bounded function $f\in\cC^{\infty}(M)$, set 
\begin{equation}\label{toe2.4}
T_{f,\,k}:L^2(M,L^k)\longrightarrow L^2(M,L^k)\,,
\quad T_{f,\,k}=P_k\,f\,P_k\,,
\end{equation}
where the action of $f$ is the pointwise multiplication by $f$.
The map which associates to  $f\in \cC^{\infty}(M)$
the family of bounded operators $\{T_{f,\,k}\}$ on $L^2(M,L^k)$ is called
the  {\em Berezin-Toeplitz quantization}\/.
A {\em Toeplitz operator}\index{Toeplitz operator}
is a sequence $\{T_k\}_{k\in\N}$ of bounded linear endomorphisms of 
$L^2(M,L^k)$ verifying $T_{k}=P_k\,T_k\,P_k$\,,
such that there exist a sequence $g_\ell\in\cC^\infty(M)$ such that
for any $p\geqslant0$, there exists $C_p>0$
 with
$\|T_k-\sum_{\ell=0}^pT_{g_\ell,\,k}\, k^{-\ell}\|_{op}
\leqslant C_p\,\, k^{-p-1}$ for any $k\in \N$,
where $\norm{\,\cdot\,}_{op}$ denotes the operator norm on the space of
bounded operators.

Assume now that $(M,\omega,J)$ is a compact K\"ahler manifold.
Then Bordemann-Meinrenken-Schlichenmaier \cite{BMS94} 
and Schlichenmaier \cite{Schlich:00} (using the analysis of Toeplitz structures 
of Boutet de Monvel-Guillemin \cite{BouGu81}), Charles \cite{Cha03} 
(inspired by semiclassical analysis of Boutet de Monvel-Guillemin \cite{BouGu81}) and 
Ma-Marinescu \cite{MM08b} (using the expansion of the Bergman kernel \cite{DLM04a,MM07})
showed that the composition of two Toeplitz operators is a Toeplitz operator, 
in the sense that for any $f,g\in \cC^\infty(M)$,
 the product $T_{f,\,k}\,T_{g,\,k}$ has an asymptotic expansion 
\begin{equation}\label{0.2}
T_{f,\,k}\,T_{g,\,k}=\sum_{p=0}^\infty T_{C_p(f,\,g), k}\, k^{-p}
+\mO(k^{-\infty})
\end{equation}
where $C_p$ are 
bidifferential operators of order $\leqslant 2r$, 
satisfying $C_0(f,g)=fg$ and 
$C_1(f,g)-C_1(g,f)=\sqrt{-1}\,\{f,g\}$. Here 
$\{ \,\cdot\, , \,\cdot\, \}$ is the Poisson bracket on 
$(M,2\pi \omega)$. We deduce from \eqref{0.2},  
\begin{equation}\label{0.3}
[T_{f,\,k}\,,T_{g,\,k}]=\frac{\sqrt{-1}}{\, k}T_{\{f,g\},\,k}
+\mO(k^{-2})\,.
\end{equation}
In \cite{MM07,MM08b} Ma-Marinescu extended the Berezin-Toeplitz quantization 
to symplectic manifolds and orbifolds
by using as quantum space the kernel of the Dirac operator 
acting on powers of the prequantum line bundle twisted with 
an arbitrary vector bundle with arbitrary metric on manifolds.
Recently, Charles \cite{Cha14} introduced a semiclassical approach for symplectic 
manifolds inspired from the Boutet de Monvel-Guillemin theory \cite{BouGu81}.

In this paper we extend the Berezin-Toeplitz quantization in several directions.
Firstly, we consider an arbitrary Hermitian manifold $(M,\Theta,J)$ endowed with
arbitrary Hermitian holomorphic line bundle $(L,h)$ and we
quantize the open set $M(0)$ where the curvature of $(L,h)$ is positive.
Since there are no holomorphic $L^2$ sections in general, we use as
quantum spaces the spectral spaces of the Kodaira Laplacian $\Box^{(0)}_{k}$ on
$L^2(M,L^k)$, corresponding to energy less than $k^{-N}$, $N>1$ fixed, 
decaying to $0$ polynomially in $k$, as $k\to\infty$.  
Secondly, we consider the same construction for the Kodaira Laplacian $\Box^{(q)}_{k}$
acting on $(0,q)$-forms. In this case we quantize the open set $M(q)$ 
where the curvature of $(L,h)$ is non-degenerate and has exactly $q$ negative eigenvalues
(and hence $n-q$ positive ones). 
Quantization using $(0,q)$-forms was introduced in \cite[\S 8.2]{MM07} for bundles
with mixed curvature of signature $(q,n-q)$ everywhere on a compact manifold. It was based on the
asymptotic of Bergman kernel developed in \cite{MM06}.

The idea underlying the construction used in this paper comes from the 
local holomorphic Morse inequalities \cite{Bismut87,De:85,HM12,MM07}. Roughly speaking, the harmonic
$(0,q)$-forms with values in $L^{k}$ tend to concentrate on $M(q)$ as $k\to\infty$.
More precisely, the semiclassical limit of the kernel of the spectral projectors
considered above was determined in \cite[Theorem\,1.1]{HM12}, see also \cite[Theorems\,1.6\,--1.10]{HM12}
for important particular cases. This is the main technical ingredient
used in this paper, which is in turn based on techniques of microlocal and semiclassical analysis,
see e.\,g.\ \cite{DS99,MS74}, especially the stationary phase method of Melin-Sj\"ostrand \cite{MS74}.

We now formulate the main results. We refer to Section~\ref{s:prelim} for some 
standard notations and terminology used here.  
We are working in the following general setting:
\medskip

\begin{itemize}
\item[(A)] $(M,\Theta,J)$ is a Hermitian manifold of
complex dimension $n$, where $\Theta$ is a smooth positive $(1,1)$-form 
and $J$ is the complex structure.
Moreover,
$(L,h)$ is a holomorphic Hermitian line bundle over $M$, where
$h$ is the Hermitian fiber metric on $L$, and
$q\in\{0,1,\ldots,n\}$.
\smallskip
\item[(B)]  $f,g\in\cC^\infty(M)$ are smooth bounded functions.
\end{itemize}
\medskip

Let $g^{TM}_\Theta(\cdot,\cdot)=\Theta(\cdot,J\cdot)$ be the Riemannian 
metric on $TM$ induced by $\Theta$ and $J$ and let $\langle\,\cdot\,,\cdot\,\rangle$ be the Hermitian
metric on $\Complex TM:=TM\otimes_\Real\Complex$
induced by $g^{TM}_\Theta$. The Riemannian volume
form $dv_M$ of $(M,\Theta)$ satisfies $dv_M= \Theta^n/n!$\,. For every $q=0,1,\ldots,n$, 
the Hermitian metric $\langle\,\cdot\,,\cdot\,\rangle$ on $TM\otimes_\Real\Complex$ 
induces a Hermitian metric $\langle\,\cdot\,,\cdot\,\rangle$ on $\Lambda^{0,q}(T^*M)$ 
the bundle of $(0,q)$ forms of $M$.

We will denote by
$\phi$ the local weights of the Hermitian metric $h$ on $L$ (see \eqref{s1-e1}). 
Let $\nabla ^L$ be the holomorphic Hermitian connection on $(L,h)$  with curvature $R^L=(\nabla^L)^2$.
We will identify the curvature form $R^L$ with the Hermitian matrix
$\dot{R}^L \in\cC^\infty(M,\End(T^{1,0}M))$ satisfying
for every $U, V\in T^{1,0}_xM$, $x\in M$,
\begin{equation}\label{s0-e0}
\langle\,R^L(x)\,,\,U\wedge\ov{V}\,\rangle = \langle\,\dot{R}^L(x)U,V\,\rangle.
\end{equation} 
Let $\det\dot{R}^L(x):=\mu_1(x)\ldots\mu_n(x)$, where $\{\mu_j(x)\}_{j=1}^n$, are the eigenvalues of $\dot R^L$ with respect to 
$\langle\,\cdot\,,\cdot\,\rangle$.
For $j\in\{0,1,\dots,n\}$, let 
\begin{equation} \label{s0-e2}
\begin{split}
M(j)=\big\{x\in M;\, &\mbox{$\dot{R}^L(x)$ is non--degenerate and has exactly $j$ negative eigenvalues}\big\}.
\end{split}
\end{equation}
We denote by $W$ the subbundle of rank $j$ of $T^{1,0}M|_{M(j)}$ generated 
by the eigenvectors corresponding to negative eigenvalues of $\dot{R}^L$. 
Then $\det\ov{W}^{\,*}:=\Lambda^j\ov{W}^{\,*}\subset \Lambda^{0,j}(T^*M)|_{M(j)}$ 
is a rank one sub-bundle. Here $\ov{W}^{\,*}$ is the dual bundle of the complex 
conjugate bundle of $W$ and $\Lambda^j\ov{W}^{\,*}$ is the vector space of all 
finite sums of $v_1\wedge\cdots\wedge v_j$, 
$v_1,\ldots,v_j\in\ov{W}^{\,*}$. We denote by $I_{\det\ov{W}^{\,*}}\in\End(\Lambda^{0,j}(T^*M))$ 
the orthogonal projection from $\Lambda^{0,j}(T^*M)$ onto $\det\ov{W}^{\,*}$.

For $k>0$, let $(L^k,h^k)$ be the $k$-th tensor power of the line bundle $(L,h)$.  
Let $(\,\cdot\,,\cdot\,)_{k}$ and  $(\,\cdot\,,\cdot\,)$ denote the global $L^2$ 
inner products on $\Omega^{0,q}_0(M, L^k)$ and $\Omega^{0,q}_0(M)$ induced by 
$\langle\,\cdot\,,\cdot\,\rangle$ and $h^k$ respectively (see \eqref{toe2.2}). 
We denote by $L^2_{(0,q)}(M,L^k)$ and $L^2_{(0,q)}(M)$ the completions of 
$\Omega^{0,q}_0(M, L^k)$ and $\Omega^{0,q}_0(M)$ with respect to 
$(\,\cdot\,,\cdot\,)_{k}$ and $(\,\cdot\,,\cdot\,)$ respectively.  

Let $\Box_{k}^{(q)}$ be the Kodaira Laplacian acting on $(0,q)$--forms 
with values in $L^k$, cf.\ \eqref{e:Kod_Lap}. We denote by the same symbol 
$\Box_{k}^{(q)}$ the Gaffney extension of the Kodaira Laplacian, cf.\ \eqref{Gaf1}. 
It is well-known that $\Box^{(q)}_k$ is self-adjoint and the spectrum 
of $\Box^{(q)}_k$ is contained in $\ol\Real_+$ (see \cite[Proposition\,3.1.2]{MM07}). 
For a Borel set $B\subset\Real$ let $E(B)$ be the spectral projection of $\Box^{(q)}_k$ 
corresponding to the set $B$, where $E$ is the spectral measure of $\Box^{(q)}_k$ 
(see Davies~\cite[Section\,2]{Dav95}) and for $\lambda\in\Real$ 
we set $E_\lambda=E\big((-\infty,\lambda]\big)$ and
\begin{equation} \label{s1-specsp}
\cE^q_\lambda(M, L^k)=\operatorname{Range}E_\lambda\subset L^2_{(0,q)}(M,L^k)\,.
\end{equation}
If $\lambda=0$, then $\cE^q_0(M, L^k)=\Ker\Box_{k}^{(q)}=:\cH^q(M,L^k)$
is the space of global harmonic sections. The \emph{spectral projection} of 
$\Box_{k}^{(q)}$ is the orthogonal projection
\begin{equation}\label{e:orto_proj}
P^{(q)}_{k,\lambda}:L^2_{(0,q)}(M,L^k)\to\cE^q_\lambda(M, L^k)\,.
\end{equation}
Fix $f\in\cC^\infty(M)$ be a bounded function. Let $\lambda\geq0$. 
The \emph{Berezin-Toeplitz quantization for $\cE^q_\lambda(M, L^k)$} is the operator 
\begin{equation}\label{e-gue140419}
T^{(q),f}_{k,\lambda}:=P^{(q)}_{k,\lambda}\circ f\circ P^{(q)}_{k,\lambda}: 
L^2_{(0,q)}(M,L^k)\to\cE^q_\lambda(M, L^k).
\end{equation}

Let $T^{(q),f}_{k,\lambda}(\,\cdot\,,\cdot)$ be the Schwartz kernel of 
$T^{(q),f}_{k,\lambda}$, see \eqref{e:kern0}, \eqref{e:kern1}. 
Since $\Box^{(q)}_k$ is elliptic, we have 
$T^{(q),f}_{k,\lambda}(\,\cdot\,,\,\cdot\,)\in 
\cC^\infty\big(M\times M,(L^k\otimes\Lambda^{0,q}(T^*M))
\boxtimes(L^k\otimes\Lambda^{0,q}(T^*M))^*\big)$.

Let $A_k:L^2_{(0,q)}(M,L^k)\To L^2_{(0,q)}(M,L^k)$ be a 
$k$-dependent continuous operator with smooth kernel $A_k(x,y)$ 
and let $D_0, D_1\Subset M$ be open trivializations with trivializing sections 
$s$ and $\widehat s$ respectively. In this paper, 
we will identify $A_k$ and $A_k(x,y)$ on $D_0\times D_1$ with the localized operators 
$A_{k,s,\widehat s}$ and $A_{k,s,\widehat s}(x,y)$ respectively (see \eqref{e-gue141118}).

The first main result of this work is the following. 

\begin{thm}\label{t-gue140521}
Under the assumptions (A) and (B) let $j\in\set{0,1,\ldots,n}$
and $D_{0},D_{1}\Subset M$ on which $L$ is trivial.
%
Suppose that
one of the following conditions is fulfilled:
\\[2pt]
(i)   $D_0\Subset M(j)$ and $j\neq q$,
\\[2pt]
(ii) $D_0\Subset M(q)$ and $\ol D_0\bigcap\ol D_1=\emptyset$. 
\\[2pt]
Then, for every $N>1$, $m\in\mathbb N$, there exists $C_{N,m}>0$ independent of $k$ 
such that 
\begin{equation}\label{e1.1}
\abs{T^{(q),f}_{k,k^{-N}}(x,y)}_{\cC^m(D_0\times D_1)}\leq C_{N,m}k^{2n-\frac{N}{2}+2m}.
\end{equation}
If $D_0\Subset M(q)$ there exists 
a symbol 
\[b_f(x,y,k)\in S^{n}(1;D_0\times D_0, \Lambda^{0,q}(T^*M)
\boxtimes (\Lambda^{0,q}(T^*M))^*)
\] and a phase function
$\Psi\in\cC^\infty(D_0\times D_0)$ such that for every $N>1$, $m\in\mathbb N$, there exists
$\Td C_{N,m}>0$ independent of $k$ such that 
\begin{equation}\label{e1.2}
\abs{T^{(q),f}_{k,k^{-N}}(x,y)-e^{ik\Psi(x,y)}b_f(x,y,k)}_{\cC^m(D_0\times D_0)}\leq
\Td C_{N,m}k^{2n-\frac{N}{2}+2m},
\end{equation}
where $b_f(x,y,k)\sim\sum^\infty_{j=0}b_{f,j}(x,y)k^{n-j}$ in the sense of Definition \ref{s1-d1} and  
\begin{equation}  \label{e-gue140521I}
\begin{split}
&b_{f,0}(x,x)=(2\pi)^{-n}f(x)\,\big|\!\det\dot{R}^L(x)\big|I_{\det\ov{W}^{\,*}}(x),\ \ x\in D_0,
\end{split}
\end{equation}
and 
\begin{equation}\label{prop_psi}
\begin{split}
&\Psi(x,y)\in\cC^\infty(D_0\times D_0),\ \ \Psi(x,y)=-\ol\Psi(y,x)\,,\\
& \exists\, c>0:\ {\rm Im\,}\Psi\geq c\abs{x-y}^2\,,\ \Psi(x,y)=0\Leftrightarrow x=y \,.
\end{split}
\end{equation}
\end{thm}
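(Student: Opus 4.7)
The plan is to rewrite the Toeplitz kernel as
\begin{equation*}
T^{(q),f}_{k,k^{-N}}(x,y)=\int_M P^{(q)}_{k,k^{-N}}(x,u)\,f(u)\,P^{(q)}_{k,k^{-N}}(u,y)\,dv_M(u),
\end{equation*}
and to reduce everything to the semiclassical description of the spectral projection kernel $P^{(q)}_{k,k^{-N}}(x,y)$ obtained in [HM12], which the paper has flagged as its main technical input. Recall that this description says: on $M(q)$, $P^{(q)}_{k,k^{-N}}(x,y)=e^{ik\Psi(x,y)}a(x,y,k)$ modulo a remainder polynomially small in $k$ in every $\cC^m$-norm, with $a(x,y,k)\sim\sum_{j\geq0}a_j(x,y)k^{n-j}$, with $\Psi$ satisfying (\ref{prop_psi}), and with $a_0(x,x)=(2\pi)^{-n}|\det\dot R^L(x)|I_{\det\ov W^{\,*}}(x)$. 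Outside $M(q)$ the kernel already satisfies a bound of the form (\ref{e1.1}).

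For case (i), with $D_0\Subset M(j)$, $j\neq q$, I would insert the polynomial bound for $P^{(q)}_{k,k^{-N}}(x,u)$ directly into the integral; the remaining factor $\int f(u)P^{(q)}_{k,k^{-N}}(u,y)\,dv_M(u)$ is controlled in $\cC^m$ by the global $L^2$-boundedness of $P^{(q)}_{k,k^{-N}}$ combined with elliptic regularity of $\Box^{(q)}_k$ (using $\|\Box^{(q)}_k P^{(q)}_{k,k^{-N}}\|\leq k^{-N}$ to absorb derivatives). For case (ii), substituting the FIO form of $P$ on the product $M(q)\times M(q)$ (or the polynomial bound when the second slot leaves $M(q)$), I would exploit
\begin{equation*}
\operatorname{Im}\Psi(x,u)+\operatorname{Im}\Psi(u,y)\geq c(|x-u|^2+|u-y|^2)\geq (c/2)|x-y|^2,
\end{equation*}
so that for $\mathrm{dist}(x,y)\geq c'\log k/\sqrt{k}$ the factor $e^{-c''\log^2 k}$ beats any polynomial in $k$.

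For the main asymptotic on $D_0\Subset M(q)$, I would substitute the FIO representation into both factors and analyze the oscillatory integral with phase $\Psi(x,u)+\Psi(u,y)$ in $u$ by the complex stationary phase method of Melin--Sj\"ostrand. Using $\Psi(x,x)=0$ and $\Psi(x,y)=-\ov\Psi(y,x)$, the equation $\partial_u(\Psi(x,u)+\Psi(u,y))=0$ has, for $(x,y)$ near the diagonal, a unique almost-analytic complex critical point with nondegenerate Hessian; the critical value defines, up to equivalence of phases, the desired new phase. Stationary phase then produces a symbol $b_f\sim\sum_j b_{f,j}k^{n-j}\in S^n(1;D_0\times D_0,T^{*0,q}_yM\boxtimes T^{*0,q}_xM)$, with $b_{f,0}$ given by the amplitude at the critical point divided by the square root of the determinant of the Hessian.

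To pin down the leading coefficient (\ref{e-gue140521I}) and to identify the effective phase with the \emph{same} $\Psi$, I would use the near-idempotency of $P^{(q)}_{k,k^{-N}}$: applying exactly the same stationary phase analysis with $f\equiv 1$ must reproduce $P^{(q)}_{k,k^{-N}}$ itself modulo negligible terms; this fixes both the choice of representative of the phase and the normalization of $b_{1,0}$ to be $(2\pi)^{-n}|\det\dot R^L|I_{\det\ov W^{\,*}}$. Multiplication by $f$ then simply pulls out $f(x)$ at the critical point, yielding (\ref{e-gue140521I}). The main technical obstacle I anticipate is the careful propagation of the $k^{-N}$-dependent remainders through the complex stationary phase argument, since the spectral cutoff is soft rather than sharp; I would handle this by performing a Kuranishi-type change of coordinates in $u$ adapted to $\Psi$ that separates the oscillatory part from the Gaussian decay, so that remainder estimates of order $k^{2n-N/2+2m}$ can be summed without loss.
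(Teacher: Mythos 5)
Your proposal tracks the paper's proof quite closely in its essential structure: Theorem~\ref{t-gue140521} is indeed proved by (a) establishing off-diagonal $\cC^m$ bounds for the Toeplitz kernel from the corresponding bounds on the spectral projection kernel $P^{(q)}_{k,k^{-N}}$ from \cite{HM12}, (b) localizing on $D_0\Subset M(q)$, replacing $P^{(q)}_{k,k^{-N}}$ by its FIO model $S_k=e^{ik\Psi}b$ in $\int P(x,z)\tau(z)f(z)P(z,y)\,dv(z)$, applying Melin--Sj\"ostrand complex stationary phase to $\int S_k(x,z)\tau f(z)S_k(z,y)\,dv(z)$, and (c) identifying the emerging phase $\Psi_1$ with $\Psi$ by taking $f\equiv1$ and comparing with the reproducing identity for $S_k$ (exactly Lemma~\ref{l-gue140603I}), which also fixes $b_{f,0}(x,x)=f(x)b_0(x,x)$. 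Where your plan diverges is in the implementation of the off-diagonal and remainder estimates. The paper does \emph{not} argue through $L^2$ operator boundedness plus $k$-dependent elliptic regularity nor through a Kuranishi change of coordinates; instead it diagonalizes the kernel pointwise using orthonormal frames of $\cE^q_{k^{-N}}(M,L^k)$ chosen so that all derivatives at the chosen points load onto a single frame element, builds a coherent state $h_k$ (Lemma~\ref{l-gue140504II}) that recovers that distinguished element up to $O(k^{-\infty})$, and closes the estimates by Cauchy--Schwarz against the \emph{diagonal} derivative bounds for $P^{(q)}_{k,k^{-N}}(x,x)$ from \cite{HM12}. The error between $\int P\tau fP$ and $\int S_k\tau fS_k$ is likewise controlled by these frame/coherent-state estimates (Lemmas~\ref{l-gue140509}, \ref{l-gue140603}), not by reorganizing the phase integral. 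Your reproducing-kernel route for case (i) does give the $k^{2n-N/2}$ bound at $m=0$ via $\lvert T(x,y)\rvert\le\|f\|_\infty\sqrt{P(x,x)}\sqrt{P(y,y)}$, but carrying the $\cC^m$ norms through an elliptic-regularity/Sobolev argument with $k$-dependent operators needs care to land precisely on the $k^{2m}$ loss; the paper's frame device handles this more directly. Also, for case (ii) your inequality $\operatorname{Im}\Psi(x,u)+\operatorname{Im}\Psi(u,y)\ge (c/2)\lvert x-y\rvert^2$ is only available when $x,u,y$ sit in a common trivialization where $\Psi$ is defined; you still need the $\tau$--$(1-\tau)$ splitting and the off-diagonal spectral estimates for the $u$ that leave the chart, which the paper treats in Lemmas~\ref{l-gue140506I} and \ref{l-gue140506II}. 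None of these gaps is fatal, but they are the points at which the paper invests the real technical work.
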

We collect more properties for the phase $\Psi$ in Theorem \ref{t-gue140523}. 
The results says that, roughly speaking, the Toeplitz kernel $T^{(q),f}_{k,k^{-N}}(\cdot,\cdot)$
acting on $(0,q)$-forms, decays rapidly as $k\to\infty$ outside $M(q)$ and off-diagonal, and
admits an asymptotic expansion on the set $M(q)$.

Let $\ell,m\in\N$ be fixed and choose $N\geq2(n+\ell+2m+1)$. Then we deduce from \eqref{e1.2} that
\begin{equation}\label{e1.31}
T^{(q),f}_{k,k^{-N}}(x,x)=\sum_{r=0}^\ell b_{f,r}(x,x)k^{n-r}
+O(k^{n-\ell-1})\:\:\text{in $\cC^m(D_0)$, $D_0\Subset M(q)$}.
\end{equation}
Note that if $M$ is compact complex manifold endowed with a positive line  
bundle $L$ (i.\,e.\ $M(0)=M$) we have by \cite[Theorem\,0.1]{MM12} for any $\ell,m\in\N$, 
\begin{equation}\label{e1.32}
T^{(0),f}_{k,0}(x,x)=\sum_{r=0}^\ell b_{f,r}(x,x)k^{n-r}
+O(k^{n-\ell-1})\:\:\text{in $\cC^m(M)$}.
\end{equation}
Actually, in this case, due to the spectral gap of the Kodaira Laplacian \cite[Theorem\,1.5.5]{MM07}
we have $T^{(0)}_{f,k,k^{-N}}=T^{(0)}_{f,k,0}$ for $k$ large enough, so \eqref{e1.31} follows from
\eqref{e1.32}.
The expansion \eqref{e1.31} bears resemblance to the expansion of the Toeplitz kernels for functions
$f\in\cC^p(M)$ (see \cite[(3.19)]{BMMP}), for arbitrary $p\in\N$. In \eqref{e1.31} the upper bound for
the order of expansion $\ell$ is due to the size $k^{-N}$ of the spectral parameter, while in case of 
symbols of class $\cC^p(M)$ is due to the order of differentiability $p$.

It is interesting to note that Theorem \ref{t-gue140521} and the following results provide
a generalization of various expansions for Toeplitz operators in the case of an arbitrary 
complex manifold endowed with a positive line bundle. In this case we have simply $M=M(0)$.
Of course, in such generality, the quantum spaces have to be spectral spaces $\cE^q_{k^{-N}}(M, L^k)$.

The first three coefficients of the kernel expansions of Toeplitz operators and of
their composition for the quantization of a compact K\"ahler manifold with positive line bundle
were calculated by Ma-Marinescu \cite{MM12} in the presence of a twisting
vector bundle $E$ and later by Hsiao \cite{Hsiao09} for $E=\C$. Both
\cite{Hsiao09,MM12} work with a general not necessarily K\"ahler base metric $\Theta$
which might not be polarized, that is, $\Theta\neq\frac{\sqrt{-1}}{2\pi}R^L$
in general.  We will calculate the top coefficients $b_{f,1}(x,x)$ and $b_{f,2}(x,x)$ of the expansion
\eqref{e1.2} in Section \ref{s-gue140729}. 
The coefficients $b_{f,0}(x,x)$ and $b_{f,1}(x,x)$ were given in \cite{Cha03} for
$E=\C$ and $\Theta=\frac{\sqrt{-1}}{2\pi}R^L$.
It is a remarkable manifestation of universality, that
the coefficients for the quantization with holomorphic sections \cite{Hsiao09,MM12} 
and for the quantization with spectral spaces used in this paper are given by the same formulas.  
We refer to \cite{Xu12}
for an interpretation in graph-theoretic terms of the Toeplitz kernel expansion. 
The formulas from \cite{MM12} play an essential role in the quantization 
of the Mabuchi energy \cite{Fine:12} and Laplace operator \cite{KMS}. 
On the set where the curvature of $L$ is degenerate we have the following behavior.
\begin{thm} \label{s1-maindege}
Under the general assumptions (A) and (B),
set 
\[M_{\mathrm{deg}}=\set{x\in M;\, \mbox{$\dot{R^L}$ is degenerate at $x\in M$}}.\]
Then for every $x_0\in M_{\mathrm{deg}}$, $\varepsilon>0$, $N>1$ and every $j\in\{0,1,\dots,n\}$ , there 
exist a neighborhood $U$ of $x_0$ and $k_0>0$, such that for all $k\geq k_0$ we have
\begin{equation} \label{s1-e3main}
\abs{T^{(j),f}_{k,k^{-N}}(x,x)}\leq \varepsilon k^n,\ \ x\in U.
\end{equation}
\end{thm}

We consider next the composition of two Berezin-Toeplitz quantizations. 
We have first the following expansion of the kernels
of Toeplitz operators.

\begin{thm}\label{t-gue140729II}
Under the assumptions (A) and (B) let $j\in\set{0,1,\ldots,n}$
and $D_{0},D_{1}\Subset M$ on which $L$ is trivial.
Suppose that
one of the following conditions is fulfilled:
\\[2pt]
(i)   $D_0\Subset M(j)$ and $j\neq q$,
\\[2pt]
(ii) $D_0\Subset M(q)$ and $\ol D_0\bigcap\ol D_1=\emptyset$. 
\\[2pt]
Then, for every $N>1$, $m\in\mathbb N$, there exists $C_{N,m}>0$ independent of $k$ such that 
\begin{equation}
\abs{\big(T^{(q),f}_{k,k^{-N}}\circ T^{(q),g}_{k,k^{-N}}\big)(x,y)}_{\cC^m(D_0\times D_1)}\leq
 C_{N,m}k^{3n-\frac{N}{2}+2m}.
\end{equation}
If $D_0\Subset M(q)$ there exists 
a symbol 
\[
b_{f,g}(x,y,k)\in S^{n}(1;D_0\times D_0,\Lambda^{0,q}(T^*M)
\boxtimes(\Lambda^{0,q}(T^*M))^*)
\]
such that for every $N>1$, $m\in\mathbb N$, there exists
$\Td C_{N,m}>0$ independent of $k$ such that 
\begin{equation}\label{e-gue141118I}
\abs{\big(T^{(q),f}_{k,k^{-N}}\circ T^{(q),g}_{k,k^{-N}}\big)(x,y)
-e^{ik\Psi(x,y)}b_{f,g}(x,y,k)}_{\cC^m(D_0\times D_0)}\leq\Td C_{N,m}k^{3n-\frac{N}{2}+2m},\end{equation}
where $b_{f,g}(x,y,k)\sim\sum^\infty_{j=0}b_{f,g,j}(x,y)k^{n-j}$ in the sense of Definition 
\ref{s1-d1} and 
\begin{equation}  \label{e-gue140521Iab}
\begin{split}
&b_{f,g,0}(x,x)=(2\pi)^{-n}f(x)g(x)\big|\det\dot{R}^L(x)\big|I_{\det\ov{W}^{\,*}}(x),\ \ x\in D_0,
\end{split}
\end{equation}
and $\Psi(x,y)\in\cC^\infty(D_0\times D_0)$ is as in Theorem~\ref{t-gue140521}. 
\end{thm}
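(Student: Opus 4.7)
The plan is to reduce the analysis of the composition to a product of two already-controlled kernels. The key observation is that the idempotence $(P^{(q)}_{k,k^{-N}})^{2}=P^{(q)}_{k,k^{-N}}$ of the spectral projector, combined with the definition $T^{(q),f}_{k,k^{-N}}=P^{(q)}_{k,k^{-N}}fP^{(q)}_{k,k^{-N}}$, forces $T^{(q),f}_{k,k^{-N}}P^{(q)}_{k,k^{-N}}=T^{(q),f}_{k,k^{-N}}$, hence
\begin{equation}\label{plan-compo}
\bigl(T^{(q),f}_{k,k^{-N}}\circ T^{(q),g}_{k,k^{-N}}\bigr)(x,y)=\int_{M} T^{(q),f}_{k,k^{-N}}(x,z)\,g(z)\,P^{(q)}_{k,k^{-N}}(z,y)\,dv_M(z).
\end{equation}
This identity is the starting point, with the $T$-factor controlled by Theorem~\ref{t-gue140521} and the projector factor controlled by its special case $f\equiv 1$, which is essentially the main semiclassical result of \cite{HM12}.

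Under hypothesis (i), $D_0\Subset M(j)$ with $j\neq q$, and Theorem~\ref{t-gue140521}(i) yields $\abs{T^{(q),f}_{k,k^{-N}}(x,z)}_{\cC^m(D_0\times D')}\leq C\,k^{2n-N/2+2m}$ for any relatively compact $D'\Subset M$. Combined with the uniform on-diagonal bound $|P^{(q)}_{k,k^{-N}}(z,y)|\leq C\,k^{n}$ (which follows from \cite{HM12}) and a cut-off argument reducing the $z$-integration to a fixed compact set, \eqref{plan-compo} yields the desired bound $C\,k^{3n-N/2+2m}$ after $\cC^m$-differentiation. Under hypothesis (ii), the separation ${\rm dist\,}(x,y)\geq c\log k/\sqrt{k}$ forces, for every $z\in M$, that at least one of ${\rm dist\,}(x,z)\geq \tfrac{c}{2}\log k/\sqrt{k}$ or ${\rm dist\,}(z,y)\geq \tfrac{c}{2}\log k/\sqrt{k}$ holds; in the first region Theorem~\ref{t-gue140521}(ii) controls the $T$-factor, in the second the analogous estimate on $P^{(q)}_{k,k^{-N}}$ controls the $P$-factor, and both contributions meet the claimed bound.

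For the oscillatory representation when $D_0\Subset M(q)$, fix open $\tilde D_0$ with $D_0\Subset\tilde D_0\Subset M(q)$ and split \eqref{plan-compo} as $\int_{\tilde D_0}+\int_{M\setminus\tilde D_0}$, handling the latter as above. On $\tilde D_0$, substitute the oscillatory expansions of $T^{(q),f}_{k,k^{-N}}(x,z)$ from Theorem~\ref{t-gue140521} and of $P^{(q)}_{k,k^{-N}}(z,y)$ from \cite{HM12}; the main contribution is
\[\int_{\tilde D_0}e^{ik[\Psi(x,z)+\Psi(z,y)]}\,b_f(x,z,k)\,g(z)\,\tilde b(z,y,k)\,dv_M(z),\]
a complex-phase oscillatory integral in $z$. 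Using \eqref{prop_psi} and the gluing identities for $\Psi$ recorded in Theorem~\ref{t-gue140523}, the combined phase has a unique non-degenerate almost-analytic critical point, smoothly depending on $(x,y)$ and reducing to $z=x$ at $x=y$, with critical value equal to $\Psi(x,y)$. The method of complex stationary phase (as developed in \cite{HM12}) then produces the expansion \eqref{e-gue141118I}. To extract $b_{f,g,0}(x,x)$, set $x=y$: the phase becomes $2i\,{\rm Im\,}\Psi(x,z)$ with unique zero at $z=x$ and Hessian determined by $\dot R^L(x)$; a direct computation using $I_{\det\ov W^{\,*}}^{\,2}=I_{\det\ov W^{\,*}}$ together with \eqref{e-gue140521I} and the analogous leading coefficient $(2\pi)^{-n}|\det\dot R^L(x)|I_{\det\ov W^{\,*}}(x)$ of the projector yields \eqref{e-gue140521Iab}. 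The main obstacle, as in the proof of Theorem~\ref{t-gue140521}, is the rigorous implementation of complex stationary phase with an almost-analytic phase arising as a sum of two such phases; the framework developed in \cite{HM12} carries over, since the gluing properties of $\Psi$ ensure that the critical point and critical value of $\Psi(x,z)+\Psi(z,y)$ behave well globally on $D_0\times D_0$.
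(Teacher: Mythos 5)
Your proposal is correct at the structural level and arrives at the same conclusion, but it takes a somewhat different decomposition from the paper. You exploit idempotence of the spectral projector to write $T^{(q),f}_{k,k^{-N}}T^{(q),g}_{k,k^{-N}}=T^{(q),f}_{k,k^{-N}}\,g\,P^{(q)}_{k,k^{-N}}$, reducing to the analysis of one Toeplitz factor and one projector factor. The paper instead keeps the composition in the symmetric form $T^{(q),\tau f}_{k,k^{-N}}\tau_1\,T^{(q),\tau g}_{k,k^{-N}}$ plus correction terms, by cutting off $f$ and $g$ to a neighborhood $\tilde D_0\Subset M(q)$ and inserting $1=\tau_1+(1-\tau_1)$ between the middle projectors, i.e.\ writing $P^2=P\tau_1 P+P(1-\tau_1)P$; the remainder $\hat T^{(0),\tau f,\tau g}_{k,k^{-N}}=T^{(0),\tau f}_{k,k^{-N}}(1-\tau_1)T^{(0),\tau g}_{k,k^{-N}}$ is controlled by the same orthonormal-frame arguments that drive Section~4 (and these also handle the far-from-$\tilde D_0$ part of the $z$-integral, which your write-up glosses over for non-compact $M$). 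Both routes then compare the near-diagonal kernel with products of model oscillatory kernels and apply complex stationary phase. One point where your reasoning is looser than the paper's: you claim the combined phase $\Psi(x,z)+\Psi(z,y)$ has critical value $\Psi(x,y)$ by invoking ``gluing identities'' from Theorem~\ref{t-gue140523}, but that theorem only records Taylor expansions of $\Psi$. The paper avoids computing the critical value directly; instead, Lemma~\ref{l-gue140603I} identifies the output phase by comparing the stationary-phase result for general $f$ with the special case $f\equiv 1$ (where the projector asymptotics of \cite{HM12} show it must be $e^{ik\Psi}$ times a symbol) and showing the two phases agree to infinite order on the diagonal. Your approach would need an analogous comparison, or an explicit verification that the almost-analytic critical value is $\Psi(x,y)$. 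What you gain is a cleaner algebraic reduction and a symmetric leading-term count ($k^{n}$ from $b_f$, $k^{n}$ from the projector symbol, $k^{-n}$ from the Hessian); what the paper's route buys is a treatment that leans only on the already-established Theorem~\ref{t-gue140521} for both factors, keeping the off-diagonal estimates a literal repetition of Section~4.
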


It should be noticed that Theorem~\ref{t-gue140729II} holds for any Hermitian manifold $M$, 
not necessarily compact.  
Note that the estimates in Theorem~\ref{t-gue140729II} involve the power $k^{3n-\frac{N}{2}+2m}$
compared to  $k^{2n-\frac{N}{2}+2m}$ in Theorem \ref{t-gue140521}.
We will explain why there are different exponents $3n$ and $2n$ in 
the proof of Theorem \ref{t-gue140729}.

We will calculate the top coefficients $b_{f,1}(x,x)$, $b_{f,2}(x,x)$ and 
$b_{f,g,1}(x,x)$, $b_{f,g,2}(x,x)$ of the expansions \eqref{e1.2} and 
\eqref{e-gue141118I} in Section \ref{s-gue140729} 
(see Theorem~\ref{t-gue140523I} and Theorem~\ref{s1-tmaincomp}).

We come now to the asymptotic expansion of the composition of two Toeplitz operators
in the operator norm.
Let $A_k:L^2(M,L^k)\To L^2(M,L^k)$ be $k$-dependent continuous operator. 
We say that 
$A_k=\mO(k^m+k^{m_1})$ as $k\to\infty$, locally in the $L^2$ operator norm if for any 
$\chi, \chi_1\in\cC^\infty_0(M)$, there exists $C>0$ independent of $k$ such that 
$\norm{\chi A_k\chi_1}_{op}\leq C(k^m+k^{m_1})$, for $k$ large, 
where $\norm{\cdot}_{op}$ denotes the $L^2$ operator norm.
We also denote by $\langle\,\cdot\,,|\,\cdot\,\rangle_\omega$ the Hermitian metric on 
$T^*M\otimes_\Real\Complex$ induced by $\omega:=\frac{\sqrt{-1}}{2\pi}R^L$.
\begin{thm}\label{t-comp}
Under the assumptions (A) and (B) suppose moreover that
$f,g\in \cC^\infty(M)$ have compact support in $M(0)$.
Then for every $N>1$, 
there exist functions $C_p(f,g)\in\cC^\infty_0(M(0))$, $p\in\N$, such that
for  any $\ell\in\N$
the product $T^{(0),f}_{k,k^{-N}}\,T^{(0),g}_{k,k^{-N}}$ has the asymptotic expansion 
\begin{equation}\label{0.2.1}
T^{(0),f}_{k,k^{-N}}\circ T^{(0),g}_{k,k^{-N}}=
\sum_{p=0}^\ell T^{(0),C_p(f,\,g)}_{k,k^{-N}}\, k^{-p}
+\mO(k^{-\ell-1}+k^{3n-\frac{N}{2}}),\ \ k\to\infty,
\end{equation}
locally in the $L^2$ operator norm. Moreover, 
\begin{equation}\label{0.2.1a}
C_0(f,g)=fg\,,\quad C_1(f,g)=-\frac{1}{2\pi}\langle\,\pr f\,,\,\pr\ol g\,\rangle_\omega\,,
\end{equation}
and therefore the commutator of two Toeplitz operators satisfies
\begin{equation}\label{e-gue160717}
\Big[T^{(0),f}_{k,k^{-N}},T^{(0),g}_{k,k^{-N}}\Big]=\frac{\sqrt{-1}}{k}T^{(0),\{f,g\}}_{k,k^{-N}}
+\mO(k^{-2}+k^{3n-\frac{N}{2}}) ,\ \ k\to\infty,
\end{equation}
where $\{f,g\}$ is the Poisson bracket on $(M(0),2\pi\omega)$.
\end{thm}
We will give formulas for the coefficients $C_j(f,g)$, $j=0,1,3,$ in Corollary \ref{cor:C}.
They have the same form as those
in the expansion of the Toeplitz operators acting on spaces of holomorphic sections, see
\cite[(1.29)]{Hsiao09}, \cite[(0.20)]{MM12}.
Formula \eqref{e-gue160717} represents the semiclassical correspondence principle between 
classical and quantum observables. Theorem \ref{t-comp} allows us to
introduce a star-product on the set where a line positive is positive, see Remark 
\ref{rem_sp}.

As an application of Theorem~\ref{t-gue140521} and Theorem~\ref{s1-maindege}, we obtain:  

\begin{thm}\label{localmorse}
Assume (A) and (B) are fulfilled and
let $N>2n$. Then 
\begin{equation} \label{s0-e4mainmore}
T^{(q),f}_{k,k^{-N}}(x,x)
=k^n(2\pi)^{-n}\abs{{\rm det\,}\dot{R}^L(x)}f(x)I_{\det\ov{W}^{\,*}}(x)+O(k^{n-1})\,,\:k\to\infty,
\end{equation} 
locally uniformly on $M(q)$, for every $D\Subset M$, there exists $C_D>0$ independent of $k$ such that
\begin{equation} \label{s0-e4mainmoreI}
\abs{T^{(q),f}_{k,k^{-N}}(x,x)}\leq C_Dk^n,\ \ \forall x\in D,
\end{equation} 
and if $\mathds{1}_{M(q)}$ denotes the characteristic function of $M(q)$, we have the pointwise convergence:
\begin{equation} \label{s0-e4main}
\lim_{k\To\infty}k^{-n}T^{(q),f}_{k,k^{-N}}(x,x)
=(2\pi)^{-n}f(x)\abs{{\rm det\,}\dot{R}^L(x)}\mathds{1}_{M(q)}(x)I_{\det\ov{W}^{\,*}}(x),\ \ x\in M.
\end{equation}
\end{thm}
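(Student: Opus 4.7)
The plan is to combine three pieces of information that together cover all of $M$: the diagonal asymptotics supplied by the second half of Theorem \ref{t-gue140521} on $M(q)$, the polynomial decay from Theorem \ref{t-gue140521}(i) on the sets $M(j)$ with $j\neq q$, and the degeneracy bound from Theorem \ref{s1-maindege} on $M_{\mathrm{deg}}$. Since $M = M(q) \sqcup \bigsqcup_{j\neq q} M(j) \sqcup M_{\mathrm{deg}}$, these three inputs give enough local control everywhere to deduce all three assertions by a stratification argument.

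For \eqref{s0-e4mainmore} I would fix $x_0\in M(q)$, choose an open set $D_0\Subset M(q)$ containing $x_0$, and apply the second half of Theorem \ref{t-gue140521} with $D_1=D_0$. Restricted to the diagonal, \eqref{prop_psi} gives $\Psi(x,x)=0$, so $e^{ik\Psi(x,x)}=1$ and \eqref{e1.2} at $m=0$ reduces to
\begin{equation*}
T^{(q),f}_{k,k^{-N}}(x,x) = b_f(x,x,k) + O\bigl(k^{2n-N/2}\bigr).
\end{equation*}
Using the asymptotic expansion of $b_f$ in the sense of Definition \ref{s1-d1} to write $b_f(x,x,k)=b_{f,0}(x,x)k^n + O(k^{n-1})$, and substituting the explicit formula \eqref{e-gue140521I} for $b_{f,0}$, yields the stated leading behavior with remainder $O(k^{n-1})$ for $N$ large. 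Because the constants in Theorem \ref{t-gue140521} depend only on $D_0$, the expansion is uniform on every compact subset of $D_0$, which gives locally uniform convergence on $M(q)$.

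For the uniform bound \eqref{s0-e4mainmoreI} I would cover the compact set $\ov D$ by finitely many open sets of three types: open sets $D_0\Subset M(q)$, on which the previous step yields $O(k^n)$; open sets $D_0\Subset M(j)$ with $j\neq q$, on which \eqref{e1.1} at $m=0$ gives $|T^{(q),f}_{k,k^{-N}}(x,x)| \leq C_{N,0}\,k^{2n-N/2}$, which is $o(k^n)$ since $N>2n$; and neighborhoods furnished by Theorem \ref{s1-maindege} with $\varepsilon=1$, on which $|T^{(q),f}_{k,k^{-N}}(x,x)| \leq k^n$ once $k\geq k_0$. Taking the maximum of the finitely many constants and absorbing the finitely many small $k$ into the constant yields the desired uniform estimate $C_D k^n$. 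Finally, \eqref{s0-e4main} follows by the same stratification applied pointwise: on $M(q)$ directly from \eqref{s0-e4mainmore}; on $M(j)$ with $j\neq q$ from the $o(k^n)$ bound above; and on $M_{\mathrm{deg}}$ from Theorem \ref{s1-maindege} used with arbitrary $\varepsilon>0$, which gives $\limsup_k k^{-n}|T^{(q),f}_{k,k^{-N}}(x,x)|\leq \varepsilon$. The three cases match $(2\pi)^{-n}f(x)|\det\dot R^L(x)|\,1_{M(q)}(x)\,I_{\det\ov W^{\,*}}(x)$. The main obstacle is not any single estimate, since all of the heavy lifting has been done in Theorems \ref{t-gue140521} and \ref{s1-maindege}; rather it is verifying that the three heterogeneous bounds patch together uniformly on the possibly complicated compact set $\ov D$, which compactness and the point-by-point nature of Theorem \ref{s1-maindege} render routine.
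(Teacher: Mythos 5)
Your proof is correct and is exactly the paper's intended route: the theorem is stated as a direct application of Theorems~\ref{t-gue140521} and~\ref{s1-maindege} without further argument, and your stratification $M=M(q)\sqcup\bigsqcup_{j\neq q}M(j)\sqcup M_{\mathrm{deg}}$, together with a finite-cover argument on $\ov{D}$, is precisely how the three ingredients are meant to be assembled. The only wrinkle, which you flag yourself by writing ``for $N$ large'', is that the error $O(k^{2n-N/2})$ in \eqref{e1.2} is merely $o(k^n)$ for $N>2n$ and requires $N\geq 2n+2$ to become $O(k^{n-1})$ as written in \eqref{s0-e4mainmore} (compare \eqref{e1.31}); this is a slight looseness in the theorem statement rather than in your argument, and \eqref{s0-e4mainmoreI} and \eqref{s0-e4main} hold verbatim for $N>2n$.
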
 

Since $L^k_x\boxtimes(L^k_x)^*\cong\Complex$ we can identify $T^{(q),f}_{k,\lambda}(x,x)$ 
to an element of $\End(\Lambda^{0,q}_x(T^*M)$. Then
\begin{equation}\label{e:state}
M\ni x\longmapsto T^{(q),f}_{k,\lambda}(x,x)\in\End(\Lambda^{0,q}_x(T^*M))
\end{equation}
is a smooth section of $\End(\Lambda^{0,q}(T^*M))$. Let ${\rm Tr\,}T^{(q),f}_{k,\lambda}(x,x)$ 
denote the trace of $T^{(q),f}_{k,\lambda}(x,x)$ with respect to $\langle\,\cdot\,,\cdot\,\rangle$.
When $M$ is compact, we define
\begin{equation}\label{e-gue140519I}
{\rm Tr\,}T^{(q),f}_{k,\lambda}:=\int_M{\rm Tr\,}T^{(q),f}_{k,\lambda}(x,x)dv_M(x).
\end{equation}
For $\lambda=0$, we set $T^{(q),f}_{k}:=T^{(q),f}_{k,0}$, $T^{(q),f}_{k}(x,y):=T^{(q),f}_{k,0}(x,y)$, 
${\rm Tr\,}T^{(q),f}_{k}(x,x):={\rm Tr\,}T^{(q),f}_{k,0}(x,x)$, 
${\rm Tr\,}T^{(q),f}_{k}:={\rm Tr\,}T^{(q),f}_{k,0}$.

From \eqref{s0-e4mainmore}, \eqref{s0-e4mainmoreI} and \eqref{s0-e4main}, 
we get Weyl's formula for Berezin-Toeplitz quantization.

\begin{thm}\label{t-gue140523II}
Assume (A) and (B) are fulfilled and
let $N>2n$. 
If $M$ is compact, then 
\begin{equation} \label{s0-e4mainI}
{\rm Tr\,}T^{(q),f}_{k,k^{-N}}=
k^n(2\pi)^{-n}\int_{M(q)}f(x)\big|\!\det\dot{R}^L(x)\big|\,dv_M(x)+o(k^n)\,,
\:\:k\to\infty.
\end{equation}
\end{thm}
From Theorem~\ref{t-gue140523II} we deduce the following (see Section~\ref{s-gue140802}).
\begin{thm}\label{t-gue140523III}
Under assumptions (A) and (B) suppose
that $M$ is compact and $M(q-1)=\emptyset$, $M(q+1)=\emptyset$.
Then 
\begin{equation}\label{e-gue140802}
\lim_{k\To\infty}\abs{k^{-n}T^{(q),f}_{k}(x,x)-(2\pi)^{-n}f(x)\big|\!\det\dot{R}^L(x)\big|\,
\mathds{1}_{M(q)}(x)I_{\det\ov{W}^{\,*}}(x)}=0\:\:\: \text{in $L^1_{(0,q)}(M)$}. 
\end{equation}
In particular,  
\begin{equation} \label{s0-e4mainII}
{\rm Tr\,}T^{(q),f}_{k}=k^n(2\pi)^{-n}\int_{M(q)}f(x)\big|\!\det\dot{R}^L(x)\big|\,dv_M(x)+o(k^n)
\quad\text{as $k\to\infty$\,.}
\end{equation}
\end{thm}
Let's consider $q=0$ and $f\equiv1$ in \eqref{s0-e4mainII}. If $M(1)=\emptyset$ we obtain
$\dim H^0(M,L^k)=k^n(2\pi)^{-n}\int_{M(0)}\big|\!\det\dot{R}^L(x)\big|\,dv_M(x)+o(k^n)$
as $k\to\infty$. 
Therefore, $\dim H^0(M,L^k)\sim k^n$ as $k\to\infty$, provided $M(0)\neq\emptyset$ and $M(1)=\emptyset$.
This is a form of Demailly's criterion for a line bundle to be big, which answers
the Grauert-Riemenschneider conjecture, see \cite{De:85}, \cite[Theorem\,2.2.27]{MM07}. 

We wish now to link the quantization scheme we proposed above by using spectral
spaces $\cE^q_{k^{-N}}(M, L^k)$ to the more traditional quantization using
holomorphic sections (or, more generally, harmonic forms).
For this purpose we need the notion of  
$O(k^{-N})$ small spectral gap property introduced in~\cite[Definition\,1.5]{HM12}:
\begin{defn} \label{s1-d2bis}
Let $D\subset M$. We say that $\Box^{(q)}_k$ has \emph{$O(k^{-N})$ small spectral 
gap on $D$} if there exist constants $C_D>0$,  $N\in\mathbb N$, $k_0\in\mathbb N$, 
such that for all $k\geq k_0$ and $u\in\Omega^{0,q}_0(D,L^k)$, we have  
\[\norm{(I-P^{(q)}_{k,0})u}_{k}\leq C_D\,k^{N}\norm{\Box^{(q)}_ku}_{k}.\]
\end{defn}
Let $D_0,D_1\subset M$ be open sets and 
$A_k, C_k:\Omega^{0,q}_0(D_1)\To\Omega^{0,q}(D_0)$ 
be $k$-dependent continuous operators with smooth kernels $A_k(x,y), C_k(x,y)\in
\cC^\infty(D_0\times D_1,\Lambda^{0,q}(T^*M))\boxtimes(\Lambda^{0,q}(T^*M))^*)$. We write 
$A_k\equiv C_k\mod O(k^{-\infty})$ locally uniformly on $D_0\times D_1$ or 
$A_k(x,y)\equiv C_k(x,y)\mod O(k^{-\infty})$ locally uniformly on $D_0\times D_1$
if 
\[\abs{\pr^\alpha_x\pr^\beta_y(A_k(x, y)-C_k(x,y))}=O(k^{-N})\] 
uniformly on every compact set in $D_0\times D_1$, for all $\alpha, \beta\in\mathbb N^{2n}_0$ and every $N>1$.  

The following result describes the asymptotics of the kernels of Toeplitz operators
corresponding to \emph{harmonic forms}
in the case of small spectral gap.
\begin{thm}\label{t-gue140523IV}
Under the assumptions (A) and (B) let $j\in\set{0,1,\ldots,n}$
and $D_{0},D_{1}\Subset M$ on which $L$ is trivial.
Suppose that
one of the following conditions is fulfilled:
\\[2pt]
(i)   $D_0\Subset M(j)$ and $j\neq q$,
\\[2pt]
(ii) $D_0\Subset M(q)$, $\Box^{(q)}_k$ has an $O(k^{-N})$ small spectral gap on $D_0$
 and $\ol D_0\bigcap\ol D_1=\emptyset$. 
\\[2pt]
Then
\[\begin{split}
&\mbox{$T^{(q),f}_{k}(x,y)\equiv0\mod O(k^{-\infty})$ locally uniformly 
on $D_0\times D_1$},\\
&\mbox{$(T^{(q),f}_{k}\circ T^{(q),g}_{k})(x,y)\equiv0\mod O(k^{-\infty})$ locally uniformly 
on $D_0\times D_1$}.\end{split}\]
Assume that $D_0\Subset M(q)$ and $\Box^{(q)}_k$ has an $O(k^{-N})$ small 
spectral gap on $D_0$. Then, 
\[\begin{split}
&\mbox{$T^{(q),f}_{k}(x,y)\equiv e^{ik\Psi(x,y)}b_f(x,y,k)\mod O(k^{-\infty})$ 
locally uniformly on $D_0\times D_0$},\\
&\mbox{$(T^{(q),f}_{k}\circ T^{(q),g}_{k})(x,y)\equiv e^{ik\Psi(x,y)}b_{f,g}(x,y,k)\mod O(k^{-\infty})$
locally uniformly on $D_0\times D_0$},\end{split}\]
where $b_f(x,y,k), b_{f,g}(x,y,k)\in S^{n}(1;D_0\times D_0,\Lambda^{0,q}(T^*M))\boxtimes(\Lambda^{0,q}(T^*M))^*\big)$ 
are as in \eqref{e1.2} and \eqref{e-gue141118I}, respectively, 
and $\Psi(x,y)\in\cC^\infty(D_0\times D_0)$ is as in Theorem~\ref{t-gue140521}. 
\end{thm}
There are several geometric situations when there exists a spectral gap.
For example, if $L$ is a positive line bundle on a compact manifold $M$, or more generally, if $L$ is uniformly 
positive on a complete manifold $(M,\Theta)$ with $\sqrt{-1}R^{K^*_M}$ and $\partial\Theta$ 
bounded below, then the Kodaira Laplacian $\Box^{(0)}_k$ has a ``large" spectral gap on $M$, that is, 
there exists a constant $C>0$ such that for all $k$
we have $\inf\{\lambda\in\operatorname{Spec}(\Box^{(0)}_k);\, \lambda\neq0\}\geq Ck\,,$
(see \cite[Theorem\,1.5.5]{MM07}, \cite[Theorem\,6.1.1, (6.1.8)]{MM07}).
Therefore, we can recover from Theorem \ref{t-gue140523IV} results about quantization of
non-compact manifolds, such as \cite[Theorem\,7.5.1]{MM07}, \cite[Theorem\,5.3]{MM08b},
\cite[Theorem\,2.30]{MM11}.

In this paper, as applications of Theorem~\ref{t-gue140523IV}, we establish 
Berezin-Toeplitz quantization for semi-positive and big line bundles. 
We assume now that $(M,\Theta)$ is compact and we set
\[
\operatorname{Herm}(L)=\big\{\text{singular Hermitian metrics on $L$} \big\}\,,
\]
\[\begin{split}
\mathcal{M}(L)=\big\{h\in\operatorname{Herm}(L);&\,\text{$h$ is smooth outside a proper analytic set}\\
&\text{and the curvature current of $h$ is strictly positive}\big\}\,.
\end{split}\] 
Note that by Siu's criterion \cite[Theorem\,2.2.27]{MM07}, $L$ is big under the hypotheses of 
Theorem \ref{s1-sing-semi-main} below. By \cite[Lemma\,2.3.6]{MM07}, $\mathcal{M}(L)\neq\emptyset$.
Set
\begin{equation}\label{s1-sing-semi-set2}
M':=\set{p\in M\,;\, \mbox{$\exists $ $h\in\mathcal{M}(L)$ with $h$ smooth near $p$}}.
\end{equation}
\begin{thm} \label{s1-sing-semi-main}
Let $(M,\Theta)$ be a compact Hermitian manifold.
Let $(L,h)\to M$ be a Hermitian holomorphic line bundle with smooth Hermitian metric $h$ having 
semi-positive curvature and with $M(0)\neq\emptyset$. Let $f, g\in\cC^\infty(M)$ and let 
$D_0\Subset M(0)\bigcap M'$ be an open set on which $L$ is trivial. Then 
\[\begin{split}
&\mbox{$T^{(0),f}_{k}(x,y)\equiv e^{ik\Psi(x,y)}b_f(x,y,k)\mod O(k^{-\infty})$ 
locally uniformly on $D_0\times D_0$},\\
&\mbox{$(T^{(0),f}_{k}\circ T^{(0),g}_{k})(x,y)\equiv e^{ik\Psi(x,y)}b_{f,g}(x,y,k)\mod O(k^{-\infty})$ 
locally uniformly on $D_0\times D_0$},
\end{split}\]
where $b_f(x,y,k), b_{f,g}(x,y,k)\in S^{n}(1;D_0\times D_0)$ are as in \eqref{e1.2} and 
\eqref{e-gue141118I} respectively and $\Psi(x,y)\in\cC^\infty(D_0\times D_0)$ is as in Theorem~\ref{t-gue140521}. 
\end{thm}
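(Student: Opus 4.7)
The plan is to derive Theorem~\ref{s1-sing-semi-main} as a direct corollary of Theorem~\ref{t-gue140523IV}. Since $D_0 \Subset M(0)$ is part of the hypothesis and $q = 0$, the only ingredient of Theorem~\ref{t-gue140523IV} that still has to be verified is that $\Box^{(0)}_k$ enjoys the $O(k^{-N})$ small spectral gap on $D_0$ in the sense of Definition~\ref{s1-d2bis}; once this is in hand, both claimed expansions (for $T^{(0),f}_k$ and for $T^{(0),f}_k \circ T^{(0),g}_k$) follow simultaneously from that theorem.

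Under the standing hypotheses $R^L \geq 0$ and $M(0) \neq \emptyset$, Siu's bigness criterion~\cite[Th.~2.2.27]{MM07} applies, so $L$ is big and $\mathcal M(L) \neq \emptyset$. Because $D_0 \Subset M'$ and $\overline{D_0}$ is compact, a regularized-maximum gluing of members of $\mathcal M(L)$ produces a single metric $\tilde h^L \in \mathcal M(L)$ whose local weight $\tilde\phi = \phi + \psi$ is smooth on a neighborhood $D_0' \Supset \overline{D_0}$ while the global curvature current still dominates $\epsilon\,\Theta$ for some $\epsilon > 0$.

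Given $u \in \Omega^{0,0}_0(D_0, L^k)$, the core step is to construct a global holomorphic section $s$ of $L^k$ satisfying
\[
\|u - s\|_{h^{L^k}} \leq C\, k^{N_0}\, \|\bar\partial u\|_{h^{L^k}}
\]
for some fixed $N_0$ independent of $k$. Such an $s$ arises from H\"ormander--Demailly's $L^2$-estimate with multiplier ideal sheaves for the globally positive singular metric $(\tilde h^L)^k$: one solves $\bar\partial w = \bar\partial u$ with $\|w\|^2_{\tilde h^{L^k}} \leq C k^{-1} \|\bar\partial u\|^2_{\tilde h^{L^k}}$, sets $s := u - w$, and transfers the estimate from the $\tilde h^L$-norm back to the $h^L$-norm on $D_0$. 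Once $s$ is in hand, the elementary bound $\|(I - P^{(0)}_{k,0}) u\| \leq \|u - s\|$ combined with
\[
\|\bar\partial u\|^2_{h^{L^k}} = \langle \Box^{(0)}_k u, u\rangle_{h^{L^k}} = \langle \Box^{(0)}_k u, (I - P^{(0)}_{k,0}) u\rangle_{h^{L^k}}
\]
and Cauchy--Schwarz bootstraps at once to $\|(I - P^{(0)}_{k,0}) u\|_{h^{L^k}} \leq C^2 k^{2N_0} \|\Box^{(0)}_k u\|_{h^{L^k}}$, i.e.\ the $O(k^{-2N_0})$ small spectral gap on $D_0$.

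The main obstacle is the norm comparison between $h^L$ and $\tilde h^L$ on $D_0$: a naive pointwise comparison produces a factor $e^{O(k)}$, which is hopeless. The resolution, following the method of~\cite{HM12}, exploits the fact that on $D_0 \subset M(0)$ the original smooth metric $h^L$ is already strictly positive, so one may use Demailly's regularization $\psi_\epsilon \searrow \psi$, optimize the regularization parameter $\epsilon = \epsilon(k)$, and patch a local $\bar\partial$-solution constructed in the smooth metric $h^L$ on $D_0$ to the global H\"ormander--Demailly solution obtained in $\tilde h^L$. Upgrading the exponential comparison into a polynomial one is the technical heart of the argument; once this polynomial control is secured, Theorem~\ref{t-gue140523IV} delivers both asymptotics and completes the proof.
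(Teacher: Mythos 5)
Your proposal is correct and follows essentially the same route as the paper: reduce the statement to Theorem~\ref{t-gue140523IV} by verifying the $O(k^{-N})$ small spectral gap of $\Box^{(0)}_k$ on $D_0 \Subset M(0)\cap M'$, which is precisely the content of \cite[Theorem 8.2]{HM12} that the paper cites. The additional sketch you give of how that spectral gap is actually established in \cite{HM12} (H\"ormander--Demailly $L^2$-estimates for the positively curved singular metric, patching the local $\bar\partial$-solution in $h^L$ against the global one in $\tilde h^L$, and the polynomial norm comparison) is a faithful, if informal, summary of that argument, but it is not needed for the deduction once one invokes \cite[Theorem 8.2]{HM12}.
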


Let us consider a singular Hermitian holomorphic line bundle $(L,h)\to M$ 
(see e.\,g.\ \cite[Definition\,2.3.1]{MM07}). We assume that $h$ is smooth outside a 
proper analytic set $\Sigma$ and the curvature current of $h$ is strictly positive.
The metric $h$ induces singular Hermitian
metrics $h^k$ on $L^k$. We denote by $\cali{I}(h^k)$ the Nadel multiplier ideal sheaf 
associated to $h^k$ and by $H^0(M,L^k\otimes\cali{I}(h^k))\subset H^0(M,L^k)$ the 
space of global sections of the sheaf $\cali{O}(L^k)\otimes\cali{I}(h^k)$ (see \eqref{l2:mult}), 
where $H^0(M,L^k):=\set{u\in\cC^\infty(M,L^k);\, \ddbar_ku=0}$.
We denote by $(\,\cdot\,,\cdot\,)_{k}$ the natural inner products on 
$\cC^\infty(M,L^k\otimes\cali{I}(h^k))$ induced by $h$ and the volume form $dv_M$ 
on $M$ (see \eqref{l2sing} and see also \eqref{pcsing} for the precise meaning of 
$\cC^\infty(M,L^k\otimes\cali{I}(h^k))$\,).
The (multiplier ideal) Bergman kernel of $H^0(M,L^k)\cali{I}(h^k))$ is the orthogonal projection
\begin{equation} \label{sing-e0-2}
P^{(0)}_{k,\cali{I}}:L^2(M,L^k)\To H^0(M,L^k\otimes\cali{I}(h^k)).
\end{equation}
Let $f\in\cC^\infty(M)$. The multiplier ideal Berezin-Toeplitz operator is
\begin{equation} \label{e-gue140523f}
T^{(0),f}_{k,\cali{I}}:=P^{(0)}_{k,\cali{I}}\circ f\circ 
P^{(0)}_{k,\cali{I}}:L^2(M,L^k)\To H^0(M,L^k\otimes\cali{I}(h^k))
\end{equation}
where we denote by $f$ the multiplication operator on $L^2(M,L^k)$ by $f$.
Let $T^{(0),f}_{k,\cali{I}}(x,y)$ be the distribution kernel of 
$T^{(0),f}_{k,\cali{I}}$. Note that 
$T^{(0),f}_{k,\cali{I}}(x,y)\in\cC^\infty((M\setminus\Sigma)\times(M\setminus\Sigma),(L^k)^*\boxtimes L^k)$. 

\begin{thm} \label{sing-main}
Let $(L,h)$ be a singular Hermitian holomorphic line bundle over a 
compact Hermitian manifold $(M,\Theta)$. We assume that $h$ is 
smooth outside a proper analytic set $\Sigma$ and the curvature current 
of $h$ is strictly positive. Let $f, g\in \cC^\infty(M)$. Let $D_0\subset M\setminus\Sigma$ 
be an open set on which $L$ is trivial. Then
\[\begin{split}
&\mbox{$T^{(0),f}_{k,\cali{I}}(x,y)\equiv e^{ik\Psi(x,y)}b_f(x,y,k)
\mod O(k^{-\infty})$ locally uniformly on $D_0\times D_0$},\\
&\mbox{$(T^{(0),f}_{k,\cali{I}}\circ T^{(0),g}_{k,\cali{I}})(x,y)\equiv e^{ik\Psi(x,y)}b_{f,g}(x,y,k)
\mod O(k^{-\infty})$ locally uniformly on $D_0\times D_0$},
\end{split}\]
where $b_f(x,y,k), b_{f,g}(x,y,k)\in S^{n}(1;D_0\times D_0)$ are as in 
\eqref{e1.2} and \eqref{e-gue141118I} respectively and $\Psi(x,y)\in\cC^\infty(D_0\times D_0)$ 
is as in Theorem~\ref{t-gue140521}.  
\end{thm}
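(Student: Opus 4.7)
The plan is to reduce Theorem~\ref{sing-main} to the framework of Theorem~\ref{t-gue140523IV} by replacing the singular data near $\Sigma$ with smooth strictly positive data and verifying a localization principle for the multiplier ideal Bergman kernel on the smooth locus. Since $D_0 \subset M\setminus\Sigma$ lies in the region where $h^L$ is smooth and its curvature is strictly positive, one can produce a smooth Hermitian metric $\widetilde{h}^L$ on $L$ that agrees with $h^L$ on a neighborhood $U$ of $D_0$ with $U\Subset M\setminus\Sigma$, by regularizing the local weight of $h^L$ via Demailly's approximation and patching with the smooth structure already present on $M\setminus\Sigma$. Global strict positivity of $\widetilde h^L$ need not hold, but the $O(k^{-N})$ small spectral gap framework of Definition~\ref{s1-d2bis} accommodates this by requiring only local strict positivity near $D_0$.

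The main step is to establish the kernel identification
\begin{equation*}
P^{(0)}_{k,\cali{I}}(x,y)\equiv \widetilde{P}^{(0)}_{k,0}(x,y)\mod O(k^{-\infty})\quad\text{on } D_0\times D_0,
\end{equation*}
where $\widetilde{P}^{(0)}_{k,0}$ is the spectral projection onto harmonic $L^k$-sections for the Kodaira Laplacian built from $\widetilde h^L$. This is a localization principle: both projections target spaces of sections whose \emph{local} behavior on $U$ is the same (local holomorphic sections in $L^k$), while any contribution from outside $U$ is exponentially small on $D_0$ by Agmon-type estimates in the strictly positive regime. One realizes the comparison by constructing peak sections near points of $D_0$ and using the multiplier-ideal Ohsawa--Takegoshi extension together with H\"ormander's $L^2$-solution of $\ddbar$ with multiplier ideal weights to transfer local sections into global elements of $H^0(M,L^k\otimes \cali{I}(h^{L^k}))$, absorbing the discrepancy in an $L^2$-small correction and invoking Nadel vanishing to control cohomological obstructions.

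Once the kernel comparison is in hand, the Toeplitz expansions follow by applying the stationary-phase computations from the proofs of Theorems~\ref{t-gue140521} and \ref{t-gue140729II} to $\widetilde P^{(0)}_{k,0}$, and transferring the resulting expansions to $T^{(0),f}_{k,\cali{I}}$ and $T^{(0),f}_{k,\cali{I}}\circ T^{(0),g}_{k,\cali{I}}$ through the $O(k^{-\infty})$ equivalence of the underlying projections; the phase $\Psi$ and the leading amplitudes $b_f$, $b_{f,g}$ are then inherited from Theorem~\ref{t-gue140521}. The principal obstacle is precisely the kernel identification: one must show that contributions to $P^{(0)}_{k,\cali{I}}$ coming from neighborhoods of $\Sigma$ are negligible at $D_0$. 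This is delicate because the singular weight can, in principle, concentrate mass of the projection near $\Sigma$; the remedy is the combination of weighted $L^2$ estimates under the strictly positive curvature current hypothesis with the exponential off-diagonal decay of Bergman-type kernels, which together force such contributions to be of order $O(k^{-\infty})$ uniformly on $D_0\times D_0$.
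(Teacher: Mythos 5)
Your proposal diverges from the paper's argument, and the divergence introduces two genuine gaps.

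The paper's proof is short: regard $M\setminus\Sigma$ as a non-compact complex manifold and work with the Gaffney extension of the Kodaira Laplacian there, with the \emph{original} metric $h^L$ (which is smooth and strictly positive on $M\setminus\Sigma$). Then (i) a lemma of Skoda, quoted from \cite[Lemma 7.2]{HM12}, gives the \emph{exact} identity $P^{(0)}_{k,\cali{I}} = P^{(0)}_{k,M\setminus\Sigma}$ on $M\setminus\Sigma$, and (ii) \cite[Theorem 9.1]{HM12} establishes that this Gaffney Laplacian has the $O(k^{-N})$ small spectral gap on every $D\Subset M\setminus\Sigma$. With these two facts, Theorem~\ref{t-gue140523IV} applies verbatim. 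No metric modification, no $O(k^{-\infty})$ kernel comparison between two different Bergman projections, and no peak-section or Ohsawa--Takegoshi construction is needed.

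Your route instead replaces $h^L$ by a globally smooth metric $\widetilde h^L$ agreeing with $h^L$ near $D_0$, and then proposes to prove $P^{(0)}_{k,\cali{I}}(x,y)\equiv\widetilde P^{(0)}_{k,0}(x,y)\mod O(k^{-\infty})$ on $D_0\times D_0$. The first gap is that this comparison is a substantial lemma, not a corollary of off-diagonal decay: the two projections target \emph{different subspaces} of $H^0(M,L^k)$ (the multiplier ideal subspace with the singular $L^2$ norm versus the full space with the $\widetilde h^{L^k}$ norm), and the inner products differ away from $D_0$. The Agmon/peak-section sketch would need to control the discrepancy of both the range and the metric; you assert this but do not supply the mechanism. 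The paper sidesteps the issue entirely because Skoda's result gives equality, not asymptotic equality. The second gap concerns the small spectral gap. You write that ``the $O(k^{-N})$ small spectral gap framework of Definition~\ref{s1-d2bis} accommodates this by requiring only local strict positivity near $D_0$.'' That conflates the \emph{hypothesis} of Theorem~\ref{t-gue140523IV} with its \emph{verification}. Local positivity near $D_0$ does not by itself give the estimate in Definition~\ref{s1-d2bis}: after your regularization, $\widetilde h^L$ may have negative or degenerate curvature away from $D_0$, and the compact Laplacian $\widetilde\Box^{(0)}_k$ could then have many low-lying eigenvalues whose eigenfunctions are not negligible on $D_0$. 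The paper obtains the spectral gap precisely by invoking \cite[Theorem 9.1]{HM12} for the Gaffney extension on $M\setminus\Sigma$ with the unmodified metric $h^L$, a setting to which that result actually applies. Your proposal never establishes the spectral gap in the modified setting, so the final reduction to the asymptotic expansions of Theorems~\ref{t-gue140521} and \ref{t-gue140729II} is not justified.
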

The paper is organized as follows. In Section \ref{s:prelim} we collect terminology,
definitions and statements we use throughout.
In Sections \ref{s-gue140503} and \ref{s:toediag} prove the off-diagonal decay for the kernels
$P^{(q)}_{k,k^{-N}}(\cdot,\cdot)$ and $T^{(q),f}_{k,k^{-N}}(\cdot,\cdot)$.
In Section \ref{s-gue140508} we establish the full asymptotic of the Berezin-Toeplitz kernels
$T^{(q),f}_{k,k^{-N}}(\cdot,\cdot)$
and prove Theorem \ref{t-gue140521}. Section \ref{s:ctoe} is devoted to
the expansion of the composition of two Toeplitz operators and contains the proof of Theorems
\ref{t-gue140729II}, \ref{t-comp},
\ref{t-gue140523IV}, \ref{s1-sing-semi-main} and \ref{sing-main}.
In Section \ref{s-gue140729} we calculate the leading coefficients of the various expansions
we established.
Finally, in Section \ref{s-gue140802} we prove 
Theorem~\ref{s1-maindege} and Theorem~\ref{t-gue140523III}.

\section{Preliminaries}\label{s:prelim}

\noindent
\textbf{Some standard notations.}
We denote by $\mathbb N=\set{0,1,2,\ldots}$ the set of 
natural numbers and by $\Real$ the set of real numbers. 
We use the standard notations $w^\alpha$, $\partial_x^\alpha$ 
for multi-indices $\alpha=(\alpha_1,\ldots,\alpha_m)\in\mathbb N^m$, $w\in\mathbb C^m$, 
$\partial_x=(\partial_{x_1},\ldots,\partial_{x_m})$.

Let $\Omega$ be a $\cC^\infty$ paracompact manifold equipped with a smooth density of integration.
We let $T\Omega$ and $T^*\Omega$ denote the tangent bundle of $\Omega$ 
and the cotangent bundle of $\Omega$ respectively.
The complexified tangent bundle of $\Omega$ and the complexified cotangent bundle of 
$\Omega$ will be denoted by $\Complex T\Omega:=T\Omega\otimes_\Real\Complex$
and $\Complex T^*\Omega:=T^*\Omega\otimes_\Real\Complex$ respectively. We write $\langle\,\cdot\,,\cdot\,\rangle$ 
to denote the pointwise duality between $T\Omega$ and $T^*\Omega$.
We extend $\langle\,\cdot\,,\cdot\,\rangle$ bilinearly to 
$(T\Omega\otimes_\Real\Complex)\times(T^*\Omega\otimes_\Real\Complex)$.

Let $E$ be a $\cC^\infty$ vector bundle over $\Omega$. We write $E^*$ to denote the dual bundle of $E$. 
The fiber of $E$ at $x\in\Omega$ will be denoted by $E_x$. We denote by ${\rm End\,}(E)$ 
the vector bundle over $\Omega$ with fiber $\End(E)_x=\End(E_x)$ over $x\in\Omega$. 

Let $F$ be a vector bundle over another $\cC^\infty$ paracompact manifold $\Omega'$. 
We introduce the vector bundle $F\boxtimes E^*=\pi^*_1(F)\otimes\pi^*_2(E^*)$ over
$\Omega'\times\Omega$, 
where $\pi_1$ and $\pi_2$ are the projections 
of $\Omega'\times\Omega$ on the first and second factor (see~\cite[p.\,337]{MM07}).
The fiber of $F\boxtimes E^*$ over $(x, y)\in \Omega'\times\Omega$
consists of the linear maps from $E_y$ to $F_x$.


Let $Y\subset\Omega$ be an open set and take any $L^2$ inner product on 
$\cC^\infty_0(Y,E)$. By using this $L^2$ inner product, in this paper, 
we will consider  a distribution section of $E$ over $Y$ is a continuous 
linear form on $\cC^\infty_0(Y,E)$. From now on, the spaces distribution sections of $E$ over $Y$ 
will be denoted by $\mathscr D'(Y, E)$.
Let $\mathscr E'(Y, E)$ be the subspace of $\mathscr D'(Y, E)$ whose elements have compact support in $Y$.
For $m\in\Real$, we let $H^m(Y, E)$ denote the Sobolev space
of order $m$ of sections of $E$ over $Y$. Put
\begin{gather*}
H^m_{\rm loc\,}(Y, E)=\big\{u\in\mathscr D'(Y, E);\, \varphi u\in H^m(Y, E),
      \,\varphi\in\cC^\infty_0(Y)\big\}\,,\\
      H^m_{\rm comp\,}(Y, E)=H^m_{\rm loc}(Y, E)\cap\mathscr E'(Y, E)\,.
\end{gather*}

Let $M$ be a complex manifold of dimension $n$. We always assume that $M$ is paracompact. 
Let $T^{1,0}M$ and $T^{0,1}M$ denote the holomorphic tangent bundle of $M$ and the anti-holomorphic 
tangent bundle of $M$ respectively. Let $\Lambda^{1,0}(T^*M)$ be the holomorphic cotangent bundle of $M$ and 
let $\Lambda^{0,1}(T^*M)$ be the anti-holomorphic cotangent bundle of $M$.  
For $p, q\in\mathbb N$, let
$\Lambda^{p,q}(T^*M)=
\Lambda^p(\Lambda^{1,0}(T^*M))\otimes\Lambda^q(\Lambda^{0,1}(T^*M))$ 
be the bundle of $(p,q)$ forms of $M$. 

For an open set
$D\subset M$ we let $\Omega^{p,q}(D)$ denote the space
of smooth sections of $\Lambda^{p,q}(T^*M)$ over $D$ and let $\Omega^{0,q}_0(D)$ 
be the subspace of $\Omega^{0,q}(D)$ whose elements have compact support in $D$. Similarly, if
$E$ is a vector bundle over $D$, then we let $\Omega^{p,q}(D, E)$
denote the space of smooth sections of $\Lambda^{p,q}(T^*M)\otimes
E$ over $D$. Let $\Omega^{p,q}_0(D, E)$ be the subspace of
$\Omega^{p,q}(D, E)$ whose elements have compact support in $D$.

For a multi-index
$J=(j_1,\ldots,j_q)\in\{1,\ldots,n\}^q$ we set $\abs{J}=q$. We say
that $J$ is strictly increasing if $1\leqslant
j_1<j_2<\cdots<j_q\leqslant  n$. Let $\set{e_1,\ldots,e_n}$ be a local frame for 
$\Lambda^{0,1}(T^*M)$ on an open set $D\subset M$. For a multi-index
$J=(j_1,\ldots,j_q)\in\{1,\ldots,n\}^q$, we put $e^J=e_{j_1}\wedge\cdots\wedge e_{j_q}$. 
Let $E$ be a vector bundle over $D$ and let $f\in\Omega^{0,q}(D,E)$. $f$ has the local representation
\[f|_D=\sideset{}{'}\sum_{\abs{J}=q}f_J(z)e^J\,,\]
where $\sum^{'}$ means that the summation is performed only
over strictly increasing multi-indices and $f_J\in\cC^\infty(D,E)$.

\medskip
\noindent
\textbf{Metric data.}
Let $(M,\Theta)$ be a complex manifold of dimension $n$, where $\Theta$ is a smooth positive $(1,1)$ form, which
induces a Hermitian metric $\langle\,\cdot\,,\cdot\,\rangle$ on the holomorphic tangent bundle $ T^{1,0}M$. 
In local holomorphic coordinates $z=(z_1,\ldots,z_n)$, if
$\Theta=\sqrt{-1}\sum^n_{j,k=1}\Theta_{j,k}dz_j\wedge d\ol z_k$, 
then $\langle\,\frac{\pr}{\pr z_j}\,|\,\frac{\pr}{\pr z_k}\,\rangle=\Theta_{j,k}, j, k=1,\ldots,n$. 
We extend the Hermitian metric $\langle\,\cdot\,,\cdot\,\rangle$ to $TM\otimes_\Real\Complex$ in a natural way. 
The Hermitian metric $\langle\,\cdot\,,\cdot\,\rangle$ on $TM\otimes_\Real\Complex$
induces a Hermitian metric on
$\Lambda^{p,q}(T^*M)$ also denoted by $\langle\,\cdot\,,\cdot\,\rangle$.  

Let $(L,h)$ be a Hermitian holomorphic line bundle over $M$, where
the Hermitian metric on $L$ is denoted by $h$. Until further notice, we assume that $h$ is smooth. 
Given a local holomorphic frame $s$ of $L$ on an open subset $D\subset M$ we define the associated local weight of $h$ by
\begin{equation} \label{s1-e1}
\abs{s(x)}^2_{h}=e^{-2\phi(x)},\quad\phi\in\cC^\infty(D,\Real).
\end{equation}
Let $R^L=(\nabla^L)^2$ be the Chern curvature of $L$, where $\nabla^L$ 
is the Hermitian holomorphic connection. Then $R^L|_D=2\pr\ddbar\phi$. 

Let $L^k$, $k>0$, be the $k$-th tensor power of the line bundle $L$.
The Hermitian fiber metric on $L$ induces a Hermitian fiber metric
on $L^k$ that we shall denote by $h^k$. If $s$ is a local
trivializing holomorphic section of $L$ then $s^k$ is a local trivializing holomorphic 
section of $L^k$. For $p,q\in\mathbb N$, the Hermitian metric $\langle\,\cdot\,,\cdot\,\rangle$ on 
$\Lambda^{p,q}(T^*M)$ and $h^k$ induce a Hermitian metric on $\Lambda^{p,q}(T^*M)\otimes L^k$, 
denoted by $\langle\,\cdot\,,\cdot\,\rangle_{h^k}$. 
For $s\in\Omega^{p,q}(M, L^k)$, we denote the
pointwise norm $\abs{s(x)}^2_{h^k}:=\langle s(x),s(x)\rangle_{h^k}$. We take $dv_M=dv_M(x)$
as the induced volume form on $M$. The $L^2$--Hermitian inner products on the spaces
$\Omega^{p,q}_0(M,L^k)$ and $\Omega^{p,q}_0(M)$ are given by
\begin{equation}\label{toe2.2}
\begin{split}
&(s_1,s_2)_{k}=\int_M\langle s_1(x),s_2(x)\rangle_{h^k}\,dv_M(x)\,,\ \ s_1,s_2
\in\Omega^{p,q}_0(M,L^k),\\
&(f_1,f_2)=\int_M\langle f_1(x),f_2(x)\rangle\,dv_M(x)\,,\ \ f_1,f_2\in\Omega^{p,q}_0(M).\\
&\norm{s}^2_{k}=(s,s)_{k},\:\: s\in\Omega^{p,q}_0(M,L^k),\quad 
\norm{f}^2:=(f,f),\:\:f\in\Omega^{p,q}_0(M).
\end{split}
\end{equation}
Let $A_k:L^2_{(0,q)}(M,L^k)\To L^2_{(0,q)}(M,L^k)$ be a $k$-dependent continuous operator with smooth kernel $A_k(x,y)$. 
Let $s$, $\widehat s$ be local trivializing holomorphic sections of $L$ on $D_0\Subset M$, $D_1\Subset M$ 
respectively, $\abs{s}^2_{h}=e^{-2\phi}$, 
$\abs{\widehat s}^2_{h}=e^{-2\widehat\phi}$, where $D_0$, $D_1$ are open sets. 
The localized operator of $A_k$ on $D_0\times D_1$ is given by 
\begin{equation}\label{e-gue141118}
\begin{split}
A_{k,s,\widehat s}:\Omega^{0,q}_0(D_1)\To\Omega^{0,q}(D_0),
\:\:u\longmapsto s^{-k}e^{-k\phi}(A_{k}\widehat s^ke^{k\widehat\phi}u),
\end{split}
\end{equation}
and let
$A_{k,s,\widehat s}(x,y)\in\cC^\infty(D_0\times D_1,\Lambda^{0,q}(T^*M))\boxtimes(\Lambda^{0,q}(T^*M))^*)$ 
be the distribution kernel of $A_{k,s,\widehat s}$. For $s=\widehat s$, $D_0=D_1$, 
we set 
\begin{equation}\label{e-gue141118a}
A_{k,s}:=A_{k,s,s}\,,\quad A_{k,s}(x,y):=A_{k,s,s}(x,y). 
\end{equation}

\medskip
\noindent
\textbf{A self-adjoint extension of the Kodaira Laplacian.}
We denote by
\begin{equation}\label{e:ddbar}
\ddbar_k:\Omega^{0,r}(M,L^k)\To\Omega^{0,r+1}(M,L^k)\,,\:\:\ol{\pr}^{\,*}_k:\Omega^{0,r+1}(M,L^k)\To\Omega^{0,r}(M,L^k)
\end{equation} 
the Cauchy-Riemann operator acting on sections of $L^k$ and its formal adjoint 
with respect to $(\,\cdot\,|\,\cdot)_{k}$ respectively. Let 
\begin{equation}\label{e:Kod_Lap}
\Box_{k}^{(q)}:=\ddbar_k\ol{\pr}^{\,*}_k+\ol{\pr}^{\,*}_k\ddbar_k:
\Omega^{0,q}(M, L^k)\To\Omega^{0,q}(M, L^k)
\end{equation}   
be the Kodaira Laplacian acting on $(0,q)$--forms with values in $L^k$. We extend 
$\ddbar_k$ to $L^2_{(0,r)}(M,L^k)$ by 
\begin{equation}\label{e:ddbar1}
\ddbar_k:{\rm Dom\,}\ddbar_k\subset L^2_{(0,r)}(M, L^k)\To L^2_{(0,r+1)}(M, L^k)\,,
\end{equation}
where ${\rm Dom\,}\ddbar_k:=\{u\in L^2_{(0,r)}(M, L^k);\, \ddbar_ku\in L^2_{(0,r+1)}(M, L^k)\}$, where 
$\ddbar_k u$ is defined in the sense of distributions. 
We also write 
\begin{equation}\label{e:ddbar_star1}
\ol{\pr}^{\,*}_k:{\rm Dom\,}\ol{\pr}^{\,*}_k\subset L^2_{(0,r+1)}(M, L^k)\To L^2_{(0,r)}(M, L^k)
\end{equation}
to denote the Hilbert space adjoint of $\ddbar_k$ in the $L^2$ space with respect to $(\,\cdot\,,\cdot\,)_{k}$.
Let $\Box^{(q)}_k$ denote the Gaffney extension of the Kodaira Laplacian given by 
\begin{equation}\label{Gaf1}
\begin{split}
{\rm Dom\,}\Box^{(q)}_k=\Big\{s\in L^2_{(0,q)}(M,L^k);\, s\in
{\rm Dom\,}\ddbar_k\cap{\rm Dom\,}\ol{\pr}^{\,*}_k,
\:\:\ddbar_ks\in{\rm Dom\,}\ol{\pr}^{\,*}_k,\ \ol{\pr}^{\,*}_ks\in{\rm Dom\,}\ddbar_k\Big\}\,,
\end{split}
\end{equation}
and $\Box^{(q)}_ks=\ddbar_k\ol{\pr}^{\,*}_ks+\ol{\pr}^{\,*}_k\ddbar_ks$ for 
$s\in {\rm Dom\,}\Box^{(q)}_k$. By a result of Gaffney \cite[Proposition\,3.1.2]{MM07}, 
$\Box^{(q)}_k$ is a positive self-adjoint operator. Note that if $M$ is complete, the 
Kodaira Laplacian $\Box^{(q)}_k$ is essentially self-adjoint \cite[Corollary\,3.3.4]{MM07} 
and the Gaffney extension coincides with the Friedrichs extension of $\Box^{(q)}_k$.

Consider a singular Hermitian metric $h$ on a holomorphic line bundle $L$ over $M$.
If $h_0$ is a smooth Hermitian metric on $L$ then $h=h_0e^{-2\varphi}$ for some function 
$\varphi\in L^1_{loc}(M,\mathbb R)$. The \emph{Nadel multiplier ideal sheaf} of $h$ is defined by 
$\cali{I}(h)=\cali{I}(\varphi)$; the definition does not depend on the choice of $h_0$. Recall that the \emph{Nadel multiplier ideal sheaf} $\cali{I}(\varphi)\subset\cO_{M}$ is the ideal subsheaf of germs of holomorphic functions $f\in\cO_{M,x}$ such that $|f|^2e^{-2\varphi}$ is integrable with respect to the Lebesgue measure in local coordinates near $x$ for all $x\in M$.
Put
\begin{equation}\label{pcsing}
\begin{split}
\cC^\infty(M,L\otimes\cali{I}(h)):=\set{S\in\cC^\infty(M,L);\, \int_M\big\lvert S\big\rvert^2_{h}\,dv_M=
\int_M\big\lvert S\big\rvert^2_{h_0}\,e^{-2\varphi}\,dv_M<\infty},
\end{split}
\end{equation}
where $\abs{\,\cdot\,}_{h}$ and $\abs{\,\cdot\,}_{h_0}$ denote the pointwise norms for sections induced by $h$ 
and $h_0$ respectively. With the help of $h$ and the volume form $dv_M$ we define an $L^2$ inner 
product on $\cC^\infty(M,L\otimes\cali{I}(h))$:
\begin{equation}\label{l2sing}
(S,S')_{h}=\int_M\langle S,S'\rangle_{h_0}\,e^{-2\varphi}dv_M\,,\quad S,S'\in \cC^\infty(M,L\otimes\cali{I}(h))\,.
\end{equation}

The singular Hermitian metric $h$ induces a singular Hermitian
metric $h^k=h^k_0e^{-2k\varphi}$
on $L^k$, $k>0$. We denote by $(\,\cdot\,,\cdot\,)_{k}$ the natural inner products on 
$\cC^\infty(M,L^k\otimes\cali{I}(h^k))$ defined as in \eqref{l2sing} and by $L^2(M,L^k)$ 
the completion of $\cC^\infty(M,L^k\otimes\cali{I}(h^k))$ with respect to $(\,\cdot\,,\cdot\,)_{k}$. 
The space of global sections in the sheaf $\cali{O}(L^k)\otimes\cali{I}(h^k)$ is given by
\begin{equation}\label{l2:mult}
\begin{split}
&H^0(M,L^k\otimes\cali{I}(h^k))\\
&\quad=\left\{
s\in\cC^\infty(M,L^k);\, \ddbar_ks=0,\,\int_M\big\lvert s\big\rvert^2_{h^k}\,dv_M=
\int_M\big\lvert s\big\rvert^2_{h^k_0}\,e^{-2k\varphi}\,dv_M<\infty
\right\}\,.
\end{split}
\end{equation}


\medskip
\noindent
\textbf{Schwartz kernel theorem and semi-classical H\"ormander symbol spaces.}
We recall here the Schwartz kernel theorem \cite[Theorems\,5.2.1, 5.2.6]{Hor03}, \cite[Thorem\,B.2.7]{MM07}. 
Let $\Omega$ be a $\cC^\infty$ paracompact manifold equipped with a smooth density of integration.
Let $E$ and $F$ be smooth vector bundles over $\Omega$. 
Any distribution (`kernel') 
\begin{equation}\label{e:kern0}
A(x,y)\in\mathscr D'(\Omega\times\Omega,F\boxtimes E^*),
\end{equation}
defines a continuous operator 
\begin{equation}\label{e:kern1}
A:\cC^\infty_0(\Omega,E)\To\mathscr D'(\Omega,F)\,,
\quad \langle Au,v\rangle:=\langle A(x,y),v(x)\otimes u(y)\rangle,
\end{equation}
for any $u\in \cC^\infty_0(\Omega,E)$, $v\in \cC^\infty_0(\Omega,F)$. 
Conversely, any continuous linear operator
$A:\cC^\infty_0(\Omega, E)\To \mathscr D'(\Omega, F)$ is given by a distribution
$A(x, y)\in \mathscr D'(\Omega\times\Omega,F\boxtimes E^*)$ as above, 
called the Schwartz distribution kernel of $A$.
Moreover, the following two statements are equivalent
\begin{equation}\label{e-gue160711s}
\begin{split}
&\mbox{(a) $A$ is continuous: $\mathscr E'(\Omega, E)\To\cC^\infty(\Omega, F)$},\\
&\mbox{(b)  $A(x,y)\in\cC^\infty(\Omega\times\Omega, F\boxtimes E^*)$}.
\end{split}
\end{equation}
If $A$ satisfies (a) or (b), we say that $A$ is a \emph{smoothing operator}. Furthermore, $A$ is smoothing if and only if
$A: H^s_{\rm comp\,}(\Omega, E)\To H^{s+N}_{\rm loc\,}(\Omega, F)$
is continuous, for all $N\geq0$, $s\in\Real$.
Let
$A, B:\cC^\infty_0(\Omega, E)\to \mathscr D'(\Omega, F)$ be continuous operators.
We write $A\equiv B$ or $A(x,y)\equiv B(x,y)$ if $A-B$ is a smoothing operator.

We say that $A$ is properly supported if 
the restrictions to ${\rm Supp\,}A(\cdot,\cdot)$ of the projections $\pi_1$ and $\pi_2$ from $\Omega\times\Omega$
to the first and second factor
 are proper. 

We say that $A$ is smoothing away the diagonal
if $\chi_1A\chi_2$ is smoothing for all $\chi_1, \chi_2\in\cC^\infty_0(\Omega)$ 
with ${\rm Supp\,}\chi_1\cap{\rm Supp\,}\chi_2=\emptyset$.

We recall the definition of semi-classical H\"{o}rmander symbol spaces 
\cite[Chapter 8]{DS99}:
\begin{defn} \label{s1-d1}
Let $U$ be an open set in $\Real^N$. Let 
\begin{gather*}
S(1)=S(1;U):=\Big\{a\in \cC^\infty(U)\,|\, \forall\alpha\in\mathbb N^N_0: 
\sup_{x\in U}\abs{\pr^\alpha a(x)}<\infty\Big\},\\
S^0(1;U):=\Big\{(a(\cdot,k))_{k\in\N}\in \cC^\infty(U)^\N\,|\, \forall\alpha\in\mathbb N^N_0
\,:\:\sup_{k\in\N}\sup_{x\in U}\abs{\pr^\alpha a(x,k)}<\infty\Big\}\,.
\end{gather*}
%
For $m\in\Real$ let
\[
S^m(1;U)=\Big\{(a(\cdot,k))_{k\in\N}\in \cC^\infty(U)^\N\,|
\,(k^{-m}a(\cdot,k))\in S^0(1;U)\Big\}\,.
\]
Hence $(a(\cdot,k))\in S^m(1;U)$ if for every $\alpha\in\mathbb N^N_0$, there
exists $C_\alpha>0$, such that $\abs{\pr^\alpha a(\cdot,k)}\leq C_\alpha k^{m}$ on $W$. 
Consider a sequence $a_j\in S^{m_j}(1)$, $j\in\N$, where $m_j\searrow-\infty$, 
and let $a\in S^{m_0}(1)$. We say that 
\[
a(\cdot,k)\sim
\sum\limits^\infty_{j=0}a_j(\cdot,k),\:\:\text{in $S^{m_0}(1)$},
\] 
if for every
$\ell\in\N$ we have $a-\sum^{\ell}_{j=0}a_j\in S^{m_{\ell+1}}(1)$ .  
For a given sequence $a_j$ as above, we can always find such an asymptotic sum 
$a$, which is unique up to an element in 
$S^{-\infty}(1)=S^{-\infty}(1;U):=\cap _mS^m(1)$.
We define $S^m(1;Y, E)$ in the
natural way, where $Y$ is a smooth paracompact manifold and $E$ is a
vector bundle over $Y$.
\end{defn}

\section{Spectral kernel estimates away the diagonal}\label{s-gue140503}
The goal of this section is to prove the off-diagonal decay for the kernel
$P^{(q)}_{k,k^{-N}}(\cdot,\cdot)$ of the spectral projection $P^{(q)}_{k,k^{-N}}$.
For this purpose we introduce a localization of the projection.
Let $s$, $\widehat s$ be local trivializing holomorphic sections of $L$ on $D_0\Subset M$, $D_1\Subset M$ 
respectively, $\abs{s}^2_{h}=e^{-2\phi}$, 
$\abs{\widehat s}^2_{h}=e^{-2\widehat\phi}$, where $D_0$, $D_1$ are open sets. 
We denote by $P^{(q)}_{k,k^{-N},s,\widehat s}$ the localization given by \eqref{e-gue141118}.

Let $\set{e_1,e_2,\ldots,e_n}$ and $\set{w_1,w_2,\ldots,w_n}$ 
be orthonormal frames of $\Lambda^{0,1}(T^*M)$ on $D_0$ and $D_1$ respectively. Then, 
\[\set{e^J;\, \abs{J}=q,\ \ \mbox{$J$ is strictly increasing}},\ \ \set{w^J;\, \abs{J}=q,
\ \ \mbox{$J$ is strictly increasing}}\] 
are orthonormal frames of $\Lambda^{0,q}(T^*M)$ on $D_0$ and $D_1$ respectively. We write 
\begin{equation}\label{e-gue140503}
\begin{split}
&P^{(q)}_{k,k^{-N},s,\widehat s}(x,y)=\sideset{}{'}\sum_{\abs{I}=
\abs{J}=q}P^{(q),I,J}_{k,k^{-N},s,\widehat s}(x,y)e^I(x)\wedge(w^J(y))^\dagger,\\
&\mbox{$P^{(q),I,J}_{k,k^{-N},s,\widehat s}(x,y)\in\cC^\infty(D_0\times D_1)$, 
$\forall \abs{I}=\abs{J}=q$, $I$, $J$ are strictly increasing},
\end{split}
\end{equation}
in the sense that for every $u=\sideset{}{'}\sum\limits_{\abs{J}=q}u_Jw^J\in\Omega^{0,q}_0(D_1)$, we have 
\begin{equation}\label{e-gue140503I}
(P^{(q)}_{k,k^{-N},s,\widehat s}u)(x)=
\sideset{}{'}\sum_{\abs{I}=\abs{J}=q}e^I(x)\otimes\int P^{(q),I,J}_{k,k^{-N},s,\widehat s}(x,y)u_J(y)dv_M(y).
\end{equation}
The goal of this section is to prove the following. 
\begin{thm}\label{t-gue140503}
With the notations used above, we assume that $D_0\Subset M(j)$, $j\neq q$, $j\in\set{0,1,\ldots,n}$ 
or $D_0\Subset M(q)$ and $\ol D_0\bigcap\ol D_1=\emptyset$. Then, for every $N>1$, $m\in\mathbb N$, 
there exists $C_{N,m}>0$ independent of $k$, such that for all strictly increasing
$I, J$ with $\abs{I}=\abs{J}=q$,
\[\abs{P^{(q),I,J}_{k,k^{-N},s,\widehat s}(x,y)}_{\cC^m(D_0\times D_1)}\leq 
C_{N,m}k^{2n-\frac{N}{2}+2m}.\]
\end{thm}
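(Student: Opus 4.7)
The plan is to deduce Theorem~\ref{t-gue140503} from the semiclassical description of the kernel of $P^{(q)}_{k,k^{-N}}$ established in \cite{HM12}, identified in the introduction as the main technical ingredient of the present paper. In any local trivialization $(s,\hat s)$ over $D_0\times D_1$, that result provides a decomposition
\begin{equation*}
P^{(q)}_{k,k^{-N},s,\hat s}(x,y)=e^{ik\Psi(x,y)}b(x,y,k)+R_k(x,y)
\end{equation*}
when $D_0\Subset M(q)$, and $P^{(q)}_{k,k^{-N},s,\hat s}(x,y)=R_k(x,y)$ when $D_0\Subset M(j)$ with $j\neq q$. Here $b(x,y,k)\in S^n(1;D_0\times D_0,T^{*0,q}_yM\boxtimes T^{*0,q}_xM)$, the phase $\Psi\in\cC^\infty(D_0\times D_0)$ satisfies $\mathrm{Im}\,\Psi(x,y)\geq c_0\abs{x-y}^2$ with equality iff $x=y$, and the remainder obeys the pointwise bound $\abs{R_k(x,y)}_{\cC^m(D_0\times D_1)}\leq C_{N,m}\,k^{2n-N/2+2m}$.

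In case~(i), where $D_0\Subset M(j)$ with $j\neq q$, no oscillatory term is present and the claimed $\cC^m$-estimate for each frame coefficient $P^{(q),I,J}_{k,k^{-N},s,\hat s}$ is immediate from the bound on $R_k$ after expanding against the orthonormal frames $\{e^I\}$ and $\{w^J\}$.

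In case~(ii), where $D_0\Subset M(q)$ but $\mathrm{dist}(x,y)\geq c\log k/\sqrt{k}$, combining the phase lower bound with the separation hypothesis gives
\begin{equation*}
\abs{e^{ik\Psi(x,y)}}\leq e^{-kc_0\abs{x-y}^2}\leq e^{-c_0 c^{2}(\log k)^2}=\mO(k^{-\infty}).
\end{equation*}
Each $\cC^m$-derivative of $e^{ik\Psi}b$ contributes at most a factor $k^m$ (from differentiating the oscillatory exponential) times the symbol bound $k^n$, which is overwhelmed by the super-polynomial decay above. Hence the oscillatory part of the kernel is $\mO(k^{-\infty})$ in $\cC^m$ on the separated region, and adding the bound on $R_k$ yields the stated estimate; expanding again in orthonormal frames gives the result for each $P^{(q),I,J}_{k,k^{-N},s,\hat s}$.

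The main obstacle lies in the semiclassical construction underlying the decomposition: building the phase $\Psi$ by a Melin--Sj\"ostrand-type complex stationary-phase argument, the amplitude $b$ by an inductive parametrix for the spectral cut-off of $\Box^{(q)}_k$ at level $k^{-N}$, and then extracting the explicit remainder bound $k^{2n-N/2+2m}$ through a Borel summation that trades the spectral-gap factor $k^{-N}$ against the derivative losses accumulated when applying elliptic estimates on the range of $E_{k^{-N}}$ in a $k$-adapted Sobolev scale. This entire machinery is imported as a black box from \cite{HM12}; the content of Section~\ref{s-gue140503} then reduces to the algebraic repackaging and the elementary exponential-decay computation outlined above.
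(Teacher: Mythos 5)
There is a genuine gap. Your proposal asserts that the reference \cite{HM12} provides a decomposition
$P^{(q)}_{k,k^{-N},s,\hat s}(x,y)=e^{ik\Psi(x,y)}b(x,y,k)+R_k(x,y)$
with remainder bound $k^{2n-N/2+2m}$ on the product $D_0\times D_1$ of two \emph{arbitrary} trivializations. But the actual result imported from \cite{HM12} (Theorem~\ref{t-gue140503I}) is a \emph{near-diagonal} asymptotic: it holds only on $D_0\times D_0$, with the same trivializing section $s$ on both arguments, and with remainder exponent $3n-N+2m$, not $2n-N/2+2m$. The phase $\Psi$ is only defined on $D_0\times D_0$; for $y$ in a trivialization $D_1$ possibly far from $D_0$ there is no $\Psi(x,y)$, and the quadratic lower bound on $\mathrm{Im}\,\Psi$ cannot be invoked. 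So your case~(ii), which hinges on estimating $e^{ik\Psi(x,y)}$ for $y\in D_1$, has no starting point, and your case~(i) likewise rests on the false premise that the remainder estimate is already an off-diagonal one.

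The nontrivial content of Theorem~\ref{t-gue140503} is precisely the passage from the on-/near-diagonal information in Theorem~\ref{t-gue140503I} to arbitrary $(x,y)\in D_0\times D_1$. The paper does this through the extremal orthonormal-frame representation of the kernel (\eqref{e-gue140504}) and a Cauchy--Schwarz inequality (\eqref{e-gue140504I}): each factor in the Cauchy--Schwarz bound is a diagonal quantity ($\pr^\alpha_x\pr^\alpha_y P^{(q),I,I}_{k,k^{-N},s}(x,x)$ and the analogous quantity on $D_1$), which \emph{is} controlled by the near-diagonal expansion. The exponent $2n-N/2+2m$ emerges as the geometric mean of the two diagonal bounds $k^{3n-N+4|\alpha|}$ (on $D_0\Subset M(j)$ or where the amplitude is nonpositive) and $k^{n+4|\beta|}$ (on $D_1$). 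In the remaining case where the approximate diagonal amplitude is positive, the paper constructs a peak section $h_k$, shows $g_1:=P^{(q)}_{k,k^{-N}}h_k/\|P^{(q)}_{k,k^{-N}}h_k\|$ is concentrated near $p$ (Lemma~\ref{l-gue140504II}), chooses an orthonormal frame starting with $g_1$, and uses the hypothesis $\mathrm{dist}(x,y)\geq c\log k/\sqrt{k}$ to make the cross-term with $g_1$ negligible, while the tail $\sum_{j\geq2}$ is again controlled by the near-diagonal remainder. None of this appears in your proposal; the step from near-diagonal to off-diagonal is exactly what is missing.
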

As preparation, we recall the next result, established in \cite[Theorems\,4.11-12]{HM12}. 
The localization $P^{(q)}_{k,k^{-N},s}$ is defined as in \eqref{e-gue141118a}.
\begin{thm}\label{t-gue140503I}
With the notations used above, assume that $D_0\Subset M(q)$. 
Then, for every $N>1$, $m\in\mathbb N$, 
there exists $C_{N,m}>0$ independent of $k$ such that 
\[\abs{P^{(q)}_{k,k^{-N},s}(x,y)-e^{ik\Psi(x,y)}b(x,y,k)}_{\cC^m(D_0\times D_0)}\leq
 C_{N,m}k^{3n-N+2m},\]
where 
\[
\begin{split}
&b(x,y,k)\in S^{n}(1;D_0\times D_0,\Lambda^{0,q}(T^*M)\boxtimes(\Lambda^{0,q}(T^*M))^*), \\
&b(x,y,k)\sim\sum^\infty_{j=0}b_{j}(x,y)k^{n-j}\text{ in }
S^{n}(1;D_0\times D_0,\Lambda^{0,q}(T^*M)\boxtimes(\Lambda^{0,q}(T^*M))^*), \\
&b_j(x,y)\in\cC^\infty(D_0\times D_0,(\Lambda^{0,q}(T^*M)\boxtimes(\Lambda^{0,q}(T^*M))^*),\ \ j=0,1,2,\ldots,\\
&b_0(x,x)=(2\pi)^{-n}\big|\det\dot{R}^L(x)\big|I_{\det\ov{W}^{\,*}}(x),\ \ \forall x\in D_0,
\end{split}
\]
and $b(x,y,k)$ is properly supported and $\Psi(x,y)\in\cC^\infty(D_0\times D_0)$ is as in Theorem~\ref{t-gue140521}.

Assume that $D_0\Subset M(j)$, $j\neq q$, $j\in\set{0,1,2,\ldots,n}$. 
Then, for every $N>1$, $m\in\mathbb N$, there exists $\Td C_{N,m}>0$ independent of $k$ such that 
\[\abs{P^{(q)}_{k,k^{-N},s}(x,y)}_{\cC^m(D_0\times D_0)}\leq\Td C_{N,m}k^{3n-N+2m}.\]
\end{thm}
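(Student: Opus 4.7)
The strategy combines the reproducing identity for the self-adjoint orthogonal projector $P := P^{(q)}_{k,k^{-N}}$ with the local estimates of Theorem~\ref{t-gue140503I}. From $P = P \circ P$ and self-adjointness, $P(x,y) = \int_M P(x,z)\overline{P(y,z)}\,dv_M(z)$, and differentiation under the integral sign combined with Cauchy--Schwarz gives, for $|\alpha|, |\beta| \leq m$,
\[
\bigl|\partial^\alpha_x\partial^\beta_y P^{(q),I,J}_{k,k^{-N},s,\hat s}(x,y)\bigr|^2 \leq \bigl[\partial^\alpha_x\partial^\alpha_{x'}P^{(q),I,I}_{k,k^{-N},s}(x,x')\big|_{x'=x}\bigr]\cdot \bigl[\partial^\beta_y\partial^\beta_{y'}P^{(q),J,J}_{k,k^{-N},\hat s}(y,y')\big|_{y'=y}\bigr],
\]
using the identity $\int|\partial^\alpha_x P(x,z)|^2\,dv(z) = \partial^\alpha_x\partial^\alpha_{x'}P(x,x')\big|_{x'=x}$ valid for any self-adjoint projection. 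Intermediate trivializations in the integration variable $z$ are handled by a standard partition of unity, summing a finite number of contributions of the same form.

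In case (i), $D_0 \Subset M(j)$ with $j \neq q$, the polynomial bound part of Theorem~\ref{t-gue140503I} directly bounds the first factor on the right by $C_{N,m} k^{3n-N+2m}$ uniformly on $D_0$, while covering $D_1$ by finitely many small trivializing open sets and applying Theorem~\ref{t-gue140503I} on each (in whichever of its two cases applies, or a uniform diagonal Bergman-type estimate near the degenerate locus) gives a bound $C_m k^{n+2m}$ for the second factor. Multiplying and taking square roots yields the stated estimate $C_{N,m} k^{2n-N/2+2m}$.

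Case (ii), $D_0 \Subset M(q)$ with $\mathrm{dist}(x,y) \geq c\log k/\sqrt{k}$, is more delicate because the Cauchy--Schwarz factor from $x$ is now only $O(k^{n+2m})$; the $k^{-N/2}$ decay must be extracted from the Gaussian profile of the kernel. Fix an open set $\tilde D_0$ with $D_0 \subset \tilde D_0 \Subset M(q)$ and a cutoff $\chi \in \cC^\infty_0(\tilde D_0)$ equal to $1$ on $D_0$. Split
\[
P(x,y) = \int \chi(z) P(x,z) \overline{P(y,z)}\,dv_M(z) + \int (1-\chi(z)) P(x,z) \overline{P(y,z)}\,dv_M(z).
\]
For the $(1-\chi)$-piece, the support condition forces $|z-x| \geq c_0 > 0$ on $\tilde D_0$, so Theorem~\ref{t-gue140503I} together with $\mathrm{Im}\,\Psi(x,z) \geq c |x-z|^2$ gives super-polynomial decay of $P(x,z)$ there; outside $\tilde D_0$, the $L^2$-concentration $\|\mathbf{1}_{M\setminus\tilde D_0} P(x,\cdot)\|_{L^2} = O(k^{-\infty})$ combined with $\|P(\cdot,y)\|_{L^2} \leq \sqrt{P(y,y)} = O(k^{n/2})$ handles the remainder. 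For the $\chi$-piece, further split at $|z-x| = c\log k/(2\sqrt{k})$: the far region gives super-polynomial smallness by Gaussian decay of $P(x,z)$, while in the near region the triangle inequality forces $|z-y| \geq c\log k/(2\sqrt{k})$, reducing the estimation of $P(z,y)$ to the same problem with $z$ lying slightly outside $D_0$ but still in $\tilde D_0 \Subset M(q)$; after finitely many iterations (each enlarging the source set by $c\log k/(2\sqrt{k})$) the splitting closes up and yields the desired bound, with polylogarithmic losses absorbed into $C_{N,m}$. The $C^m$ statement is obtained by differentiating under the integral throughout this argument.

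The main obstacle is the $L^2$-concentration $\|\mathbf{1}_{M\setminus\tilde D_0} P(x,\cdot)\|_{L^2} = O(k^{-\infty})$ underlying case (ii). It is deduced from Theorem~\ref{t-gue140503I} by computing the asymptotic expansion of the Gaussian integral $\int_{\tilde D_0} |e^{ik\Psi(x,z)} b(x,z,k)|^2\,dv_M(z)$ via complex stationary phase around the non-degenerate critical point $z = x$ (non-degeneracy following from $\mathrm{Im}\,\Psi \geq c|x-z|^2$), and matching the result with the diagonal expansion of $P(x,x) = \|P(x,\cdot)\|_{L^2}^2$ from the same theorem; equality of the two expansions, forced by $P = P \circ P$, makes the difference, which is precisely $\|\mathbf{1}_{M\setminus\tilde D_0} P(x,\cdot)\|_{L^2}^2$, be $O(k^{-\infty})$.
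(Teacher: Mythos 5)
Your proposal does not prove the statement in question; it proves a different theorem while assuming the statement as an input. The result to be established is Theorem~\ref{t-gue140503I} itself: the existence of the full asymptotic expansion $P^{(q)}_{k,k^{-N},s}(x,y)=e^{ik\Psi(x,y)}b(x,y,k)+O(k^{3n-N+2m})$ in $\cC^m(D_0\times D_0)$, with a specified phase $\Psi$ and a symbol $b\sim\sum_j b_jk^{n-j}$ whose leading term is $(2\pi)^{-n}\big|\det\dot{R}^L\big|I_{\det\ov{W}^{\,*}}$, together with the $O(k^{3n-N+2m})$ bound when $D_0\Subset M(j)$, $j\neq q$. Your argument instead derives the off-diagonal bound $C_{N,m}k^{2n-N/2+2m}$ under hypotheses (i)/(ii) --- that is the content of Theorem~\ref{t-gue140503}, proved in the paper only \emph{after} Theorem~\ref{t-gue140503I} --- and it does so by explicitly invoking ``the local estimates of Theorem~\ref{t-gue140503I}'' both for the diagonal factor in the Cauchy--Schwarz step and for the Gaussian decay and $L^2$-concentration in case (ii). With respect to the assigned statement this is circular.

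Beyond the circularity, the reproducing identity $P=P\circ P$ combined with Cauchy--Schwarz is structurally incapable of yielding the conclusion: it can only transfer diagonal upper bounds into off-diagonal upper bounds, whereas the statement asserts the existence of a specific oscillatory expansion with a prescribed phase and computable coefficients. That requires a constructive argument --- building a local approximate spectral projector as a Fourier integral operator with complex phase and comparing it with $P^{(q)}_{k,k^{-N}}$ via semiclassical elliptic (G{\aa}rding-type) estimates --- which is precisely what is done in \cite[Theorems 4.11--12]{HM12}; the present paper does not reprove this theorem but quotes it from that reference. Had the target been Theorem~\ref{t-gue140503}, your outline would be comparable in spirit to the paper's Lemmas~\ref{l-gue140504}--\ref{l-gue140504III}, which however extract the off-diagonal smallness in case (ii) not by iterating an annular decomposition in the integration variable but by constructing the quasi-mode $h_k$ concentrated at $p$ and exploiting $|h_k-P^{(q)}_{k,k^{-N}}h_k|_{h^{L^k}}\leq k^N|\Box^{(q)}_kh_k|_{h^{L^k}}=O(k^{-\infty})$. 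For Theorem~\ref{t-gue140503I} itself your proposal contains no proof.
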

The following properties of the phase function $\Psi$ follow also from \cite[Theorem\,3.8]{HM12}.
\begin{thm}\label{t-gue140523}
With the assumptions and notations used in Theorem~\ref{t-gue140521}, for a given
point $p\in D_0$, let $x=z=(z_1,\ldots,z_n)$ be local holomorphic coordinates centered at $p$ satisfying 
\begin{equation}\label{e-gue140521II}
\begin{split}
&\Theta(p)=\sqrt{-1}\sum^n_{j=1}dz_j\wedge d\ol z_j\,,\\
&\phi(z)=\sum^{n}_{j=1}\lambda_j\abs{z_j}^2+O(\abs{z}^3)\,,\:
\text{$z$ near $p$}\,,\:\{\lambda_j\}_{j=1}^n\subset\Real\setminus\{0\}\,,
\end{split}
\end{equation}
then we have near $(0,0)$,
\begin{equation} \label{s3-e16-bisbg}
\Psi(z,w)=i\sum^n_{j=1}\abs{\lambda_j}\abs{z_j-w_j}^2+i\sum^n_{j=1}\lambda_j(\ol
z_jw_j-z_j\ol w_j)+O(\abs{(z,w)}^3).
\end{equation}
Moreover, when $q=0$, we have 
\begin{equation} \label{s3-eIV} 
\Psi(z,w)=i\bigr(\phi(z)+\phi(w)\bigr)-2i\sum_{\alpha,\beta\in\mathbb N,
\abs{\alpha}+\abs{\beta}\leq N}\frac{\pr^{\abs{\alpha}+\abs{\beta}}\phi}
{\pr z^\alpha\pr\ol z^\beta}(0)\frac{z^\alpha}{\alpha!}\frac{\ol w^\beta}{\beta!}+O(\abs{(z,w)}^{N+1}),
\end{equation} 
for every $N\in\mathbb N$. 
\end{thm}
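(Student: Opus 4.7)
The plan is to derive both expansions by unpacking the construction of $\Psi$ in \cite{HM12} that underlies Theorem~\ref{t-gue140503I}. There the phase $\Psi$ is produced as a solution of a complex eikonal equation associated with $\Box^{(q)}_k$ in the local trivialization $s$, and is characterized modulo $O(|x-y|^\infty)$ by: $\Psi(x,x)=0$; the conjugation symmetry $\Psi(x,y)=-\overline{\Psi(y,x)}$; the ellipticity ${\rm Im\,}\Psi\geq c|x-y|^2$; and a normal form for $d\Psi$ on the diagonal that is fixed by the local weight $\phi$ of $h^L$. Choosing the coordinates of \eqref{e-gue140521II} makes $\Theta(p)$ Euclidean, $\dot R^L(p)$ diagonal with eigenvalues $2\lambda_1,\ldots,2\lambda_n$, and $\phi(z)=\sum_j\lambda_j|z_j|^2+O(|z|^3)$; the signs of $\lambda_j$ determine $W$ and the projector $I_{\det\overline{W}^{\,*}}$.

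For \eqref{s3-e16-bisbg} I would first write a generic Taylor jet of $\Psi$ at $(0,0)$ up to quadratic order with unknown coefficients, apply $\Psi(z,z)=0$ together with the conjugation symmetry to reduce to a Hermitian-form ansatz, and then match against the exact model on $\mathbb C^n$ with flat metric and quadratic weight $\phi_0=\sum\lambda_j|z_j|^2$. In this model the spectral kernel of $\Box^{(q)}_k$ restricted to $M(q)$ is computable in closed form via Bargmann-Fock representations adapted to the signature of $\dot R^L$, and its phase equals the right-hand side of \eqref{s3-e16-bisbg} with no error term. Since the eikonal construction in \cite{HM12} depends only on the $2$-jet of $\phi$ at $p$ modulo contributions that affect $\Psi$ only at order $\geq 3$, the general case inherits the same quadratic expansion.

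For \eqref{s3-eIV}, the key observation when $q=0$ is that in the holomorphic trivialization $s$ the conjugated Cauchy--Riemann operator $e^{-k\phi}\overline\partial_k\,e^{k\phi}$ reduces to $\overline\partial$. Consequently, the eikonal and transport conditions for $\Psi$ extracted from $\Box^{(0)}_k$ in \cite{HM12} degenerate to the requirement that $w\mapsto\Psi(z,w)+i\phi(w)$ be antiholomorphic in $w$ modulo $O(|(z,w)|^{N+1})$ for every $N$. Combined with the symmetry $\Psi(z,w)=-\overline{\Psi(w,z)}$, which transfers antiholomorphy in $w$ to holomorphy in $z$, this forces $-i\Psi(z,w)$ to equal $\phi(z)+\phi(w)$ minus twice the mixed $(z,\bar w)$-polarization of $\phi$ at $0$, namely $2\sum_{|\alpha|+|\beta|\leq N}\frac{\partial^{|\alpha|+|\beta|}\phi}{\partial z^\alpha\partial\bar z^\beta}(0)\frac{z^\alpha}{\alpha!}\frac{\bar w^\beta}{\beta!}$, yielding \eqref{s3-eIV}. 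The main obstacle is justifying cleanly that the stationary-phase construction of $\Psi$ in \cite{HM12} actually produces this polarized ansatz at \emph{every} finite order when $q=0$; once the simplification of $\overline\partial_k$ in the holomorphic frame is traced through the symbol transport equations of \cite{HM12}, both identities reduce to Taylor-series bookkeeping.
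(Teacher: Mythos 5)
The paper does not give a proof of Theorem~\ref{t-gue140523}: it simply states that these properties of $\Psi$ ``follow also from \cite{HM12}'', and later (around \eqref{coecaluXI}) points the reader to \cite[Section 4.5]{HM12} for the restriction $\Psi(z,0)=i\phi(z)+O(\abs{z}^N)$, $\Psi(0,z)=i\phi(z)+O(\abs{z}^N)$. So there is no in-paper argument to compare your sketch against; the comparison must be with the construction in \cite{HM12}.

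Your overall strategy for both \eqref{s3-e16-bisbg} and \eqref{s3-eIV} is the right one---match a Hermitian ansatz against the exact Bargmann--Fock model for the quadratic part, and then exploit that for $q=0$ the kernel in the trivialization $s^{k}$ (before the weight $e^{-k\phi}$ is stripped off) is a reproducing kernel that is holomorphic in $x$ and antiholomorphic in $y$, which, combined with $\Psi(x,x)=0$ and $\Psi(x,y)=-\ol{\Psi(y,x)}$, forces $\Psi$ to be the $i$-times polarized Taylor expansion of $\phi$. The final formula you write down is indeed \eqref{s3-eIV}.

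However, two details of your argument for \eqref{s3-eIV} are off. First, it is not true that the conjugated $\ddbar$ ``reduces to $\ddbar$'': in the localization \eqref{e-gue141118} one gets $e^{-k\phi}\ddbar(e^{k\phi}\cdot)=\ddbar+k(\ddbar\phi)\wedge$, and it is precisely the extra $k(\ddbar\phi)$ term that produces the leading-order eikonal constraint $\ddbar_{x}\Psi=i\,\ddbar_{x}\phi$ (equivalently, $\pr_{y}\Psi=i\,\pr_{y}\phi$ by the symmetry $\Psi(x,y)=-\ol{\Psi(y,x)}$). Second, the intermediate holomorphy statement has a sign error: what the eikonal equation forces is that $i\Psi(z,w)+\phi(w)$ (not $\Psi(z,w)+i\phi(w)$) is antiholomorphic in $w$ modulo $O(\abs{z-w}^\infty)$, i.e., that $i\Psi(z,w)+\phi(z)+\phi(w)$ is holomorphic in $z$ and antiholomorphic in $w$; evaluated on the diagonal and combined with $\Psi(z,z)=0$ this pins it down to $2\tilde\phi(z,\bar w)$ with $\tilde\phi$ the almost-analytic polarization of $\phi$, which gives \eqref{s3-eIV}. (One can sanity-check the sign on the flat model $\phi_0=\sum\lambda_j\abs{z_j}^2$, where $\Psi=i\phi_0(z)+i\phi_0(w)-2i\sum\lambda_j z_j\bar w_j$.) Finally, as you yourself flag, promoting the leading-order eikonal equation to control the phase at every finite order requires tracing through the transport equations in \cite[Sections 3--4]{HM12}; that step is the content that the paper is implicitly delegating to \cite{HM12} and your sketch does not supply it.
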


Fix $N>1$. Let $\set{g_1(x),g_2(x),\ldots,g_{d_k}(x)}$ be an orthonormal frame 
for $\cE_{k^{-N}}(M,L^k)$, where $d_k\in\mathbb N\cup\set{\infty}$. On $D_0$, $D_1$, we write 
\begin{equation}\label{e-gue140503II}
\begin{split}
&g_j(x)=s^k(x)\Td g_j(x),\ \ \mbox{$\Td g_j(x)=
\sideset{}{'}\sum\limits_{\abs{J}=q}\Td g_{j,J}(x)e^J(x)$ on $D_0$},\ \ j=1,\ldots,d_k,\\
&g_j(x)=\widehat s^k(x)\widehat g_j(x),\ \ \mbox{$\widehat g_j(x)=
\sideset{}{'}\sum\limits_{\abs{J}=q}\widehat g_{j,J}(x)w^J(x)$ on $D_1$},\ \ j=1,\ldots,d_k.
\end{split}
\end{equation}
It is not difficult to check that for every strictly increasing $I$, $J$, with $\abs{I}=\abs{J}=q$, we have
\begin{equation}\label{e-gue140504}
\begin{split}
&P^{(q),I,J}_{k,k^{-N},s,\widehat s}(x,y)=
\sum^{d_k}_{j=1}\Td g_{j,I}(x)e^{-k\phi(x)}\ol{\widehat g_{j,J}}(y)e^{-k\widehat\phi(y)},\\
&P^{(q),I,J}_{k,k^{-N},s}(x,y)=\sum^{d_k}_{j=1}\Td g_{j,I}(x)e^{-k\phi(x)}\ol{\Td g_{j,J}}(y)e^{-k\widehat\phi(y)}.
\end{split}
\end{equation}

\begin{lem}\label{l-gue140504}
Assume that $D_0\Subset M(j)$, $j\neq q$. Then, for every $N>1$, $m\in\mathbb N$, there 
exists $C_{N,m}>0$ independent of $k$ such that for every strictly increasing $I$, $J$, with $\abs{I}=\abs{J}=q$,
\[\abs{P^{(q),I,J}_{k,k^{-N},s,\widehat s}(x,y)}_{\cC^m(D_0\times D_1)}\leq C_{N,m}k^{2n-\frac{N}{2}+2m}.\]
\end{lem}

\begin{proof}
Fix $I$, $J$ are strictly increasing, $\abs{I}=\abs{J}=q$, and let 
$\alpha, \beta\in\mathbb N^{2n}_0$. By \eqref{e-gue140504}, we have 
\begin{equation}\label{e-gue140504I}
\abs{\pr^{\alpha}_x\pr^{\beta}_yP^{(q),I,J}_{k,k^{-N},s,\widehat s}(x,y)}
\leq\sqrt{\sum^{d_k}_{j=1}\abs{\pr^\alpha_x(\Td g_{j,I}(x)e^{-k\phi(x)})}^2}
\sqrt{\sum^{d_k}_{j=1}\abs{\pr^\beta_y(\widehat g_{j,J}(y)e^{-k\widehat\phi(y)})}^2}.
\end{equation}
In view of Theorem~\ref{t-gue140503I}, we see that 
\begin{equation}\label{e-gue140504II}
\sum^{d_k}_{j=1}\abs{\pr^\alpha_x(\Td g_{j,I}(x)e^{-k\phi(x)})}^2
\leq C_\alpha k^{3n-N+4\abs{\alpha}},\: \text{on $D_0$},
\end{equation}
where $C_\alpha>0$ is a constant independent of $k$. Moreover, it is known (see~\cite[Theorem\,4.3]{HM12}) that
\begin{equation}\label{e-gue140504III}
\sum^{d_k}_{j=1}\abs{\pr^\beta_y(\widehat g_{j,J}(y)e^{-k\widehat\phi(y)})}^2
\leq C_\beta k^{n+4\abs{\beta}} \: \text{on $D_1$},
\end{equation}
where $C_\beta>0$ is a constant independent of $k$. 
From \eqref{e-gue140504I}, \eqref{e-gue140504II} and \eqref{e-gue140504III}, the lemma follows. 
\end{proof}
Lemma~\ref{l-gue140504} provides the proof of Theorem~\ref{t-gue140503} in the case 
$D_0\Subset M(j)$, $j\neq q$. Now, we assume that $D_0\Subset M(q)$. 
Fix $p\in D_0$, $I_0$, $J_0$ strictly 
increasing with $\abs{I_0}=\abs{J_0}=q$, and $\alpha, \beta\in\mathbb N^{2n}_0$. Put 
\begin{equation}\label{e-gue140504IV}
\begin{split}
&\pr^\alpha_x\pr^\alpha_ye^{ik\Psi(x,y)}b(x,y,k)=e^{ik\Psi(x,y)}a(x,y,k)=
\sideset{}{'}\sum_{\abs{I}=\abs{J}=q}e^{ik\Psi(x,y)}a_{I,J}(x,y,k)e^I(x)\wedge(e^J(y))^\dagger,\\
&\pr^\alpha_ye^{ik\Psi(x,y)}b(x,y,k)=e^{ik\Psi(x,y)}d(x,y,k)=
\sideset{}{'}\sum_{\abs{I}=\abs{J}=q}e^{ik\Psi(x,y)}d_{I,J}(x,y,k)e^I(x)\wedge(e^J(y))^\dagger,
\end{split}
\end{equation}
where $\Psi(x,y)$ and $b(x,y,k)$ are as in Theorem~\ref{t-gue140503I} 
and $a_{I,J}(x,y,k), d_{I,J}(x,y,k)\in\cC^\infty(D_0\times D_0)$, for any 
$I$, $J$ strictly increasing with $\abs{I}=\abs{J}=q$. 

\begin{lem}\label{l-gue140504I}
Assume that $a_{I_0,I_0}(p,p,k)\leq0$. Then, there exists $C_{\alpha,\beta}>0$ 
independent of $k$ and the point $p$ such that 
\[\abs{\pr^\alpha_x\pr^\beta_yP^{(q),I_0,J_0}_{k,k^{-N},s,\widehat s}(p,y)}\leq 
C_{\alpha,\beta}k^{2n-\frac{N}{2}+2\abs{\alpha}+2\abs{\beta}},\ \ \forall y\in D_1.\]
\end{lem}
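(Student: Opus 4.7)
The strategy is to apply Cauchy--Schwarz to the bilinear sum \eqref{e-gue140504}, which factors the estimate into a piece depending only on $p$ and a piece depending only on $y$, and then to exploit the sign hypothesis on $a_{I_0,I_0}(p,p,k)$ to extract a sharper bound on the $p$-piece than the naive diagonal Bergman-type bound would give.

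Concretely, I would write
\[\pr^\alpha_x\pr^\beta_y P^{(q),I_0,J_0}_{k,k^{-N},s,\hat s}(p,y) = \sum_{j=1}^{d_k}\pr^\alpha\bigl(\tilde g_{j,I_0}e^{-k\phi}\bigr)(p)\cdot\overline{\pr^\beta\bigl(\hat g_{j,J_0}e^{-k\hat\phi}\bigr)(y)}\]
and apply Cauchy--Schwarz in $j$ to obtain $|\cdot|\leq A_k(p)^{1/2}B_k(y)^{1/2}$, where
$A_k(p)=\sum_j|\pr^\alpha(\tilde g_{j,I_0}e^{-k\phi})(p)|^2$ and $B_k(y)=\sum_j|\pr^\beta(\hat g_{j,J_0}e^{-k\hat\phi})(y)|^2$. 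The factor $B_k(y)$ is controlled by the standard diagonal bound \eqref{e-gue140504III} and contributes $O(k^{n/2+2|\beta|})$ uniformly on $D_1$.

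The heart of the argument is the $p$-factor. Setting $F_j(x):=\tilde g_{j,I_0}(x)e^{-k\phi(x)}$ and using that $\pr^\alpha$ is a \emph{real} multi-index derivative (so it commutes with conjugation, as $\phi$ is real), one reads off
\[A_k(p)=\sum_j|\pr^\alpha F_j(p)|^2 = \bigl(\pr^\alpha_x\pr^\alpha_y P^{(q),I_0,I_0}_{k,k^{-N},s,s}(x,y)\bigr)\big|_{x=y=p}.\]
Next I would invoke Theorem~\ref{t-gue140503I} with $m=2|\alpha|$ to compare $P^{(q)}_{k,k^{-N},s}$ with $e^{ik\Psi}b$ in $\mathcal{C}^{2|\alpha|}(D_0\times D_0)$; extracting the $(I_0,I_0)$ component, applying $\pr^\alpha_x\pr^\alpha_y$, using the definition \eqref{e-gue140504IV} of $a_{I_0,I_0}$, and evaluating at $(p,p)$ where $\Psi(p,p)=0$ by \eqref{prop_psi}, yields
\[A_k(p) = a_{I_0,I_0}(p,p,k) + O\!\bigl(k^{3n-N+4|\alpha|}\bigr).\]
Since $A_k(p)\geq 0$ (a sum of squared moduli) and $a_{I_0,I_0}(p,p,k)\leq 0$ by hypothesis, this forces $A_k(p)\leq C\,k^{3n-N+4|\alpha|}$, whence $A_k(p)^{1/2}\leq C\,k^{(3n-N)/2+2|\alpha|}$.

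Multiplying the two square-root estimates then gives exactly the target $C_{\alpha,\beta}\,k^{2n-N/2+2|\alpha|+2|\beta|}$, with constants independent of $p$ by the uniformity of the bounds in Theorem~\ref{t-gue140503I} and in \eqref{e-gue140504III} on $D_0\Subset M(q)$. There is no serious technical obstacle; the only delicate point is the identification of $A_k(p)$ with a diagonal derivative of the spectral kernel in the same trivialization $s$ on both sides, which hinges on $\alpha$ being a real multi-index. The role of the sign hypothesis is precisely to cancel the leading $O(k^n)$ contribution to $A_k(p)$ predicted by Theorem~\ref{t-gue140503I}, leaving only the remainder of size $k^{3n-N+4|\alpha|}$ and thereby buying the crucial factor $k^{-N/2}$ relative to the naive Bergman diagonal estimate.
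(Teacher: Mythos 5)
Your proposal is correct and follows essentially the same argument as the paper: apply Cauchy--Schwarz to separate the $p$- and $y$-factors, identify the $p$-factor $A_k(p)$ with the diagonal derivative $\pr^\alpha_x\pr^\alpha_y P^{(q),I_0,I_0}_{k,k^{-N},s}(p,p)$ (using that $\alpha\in\N_0^{2n}$ is a real multi-index), and use nonnegativity of $A_k(p)$ together with $a_{I_0,I_0}(p,p,k)\le 0$ to convert the $\cC^{2|\alpha|}$-estimate of Theorem~\ref{t-gue140503I} into the sharpened bound $A_k(p)=O(k^{3n-N+4|\alpha|})$. This is precisely the chain \eqref{e-gue140504V} combined with \eqref{e-gue140504I} and \eqref{e-gue140504III} in the paper.
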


\begin{proof}
In view of Theorem~\ref{t-gue140503I} and \eqref{e-gue140504}, we see that 
\begin{equation}\label{e-gue140504V}
\begin{split}
\sum^{d_k}_{j=1}\abs{\pr^\alpha_x(\Td g_{j,I_0}e^{-k\phi})(p))}^2=
\pr^\alpha_x\pr^\alpha_yP^{(q),I_0,I_0}_{k,k^{-N},s}(p,p)&\leq
\abs{\pr^\alpha_x\pr^\alpha_yP^{(q),I_0,I_0}_{k,k^{-N},s}(p,p)-a_{I_0,I_0}(p,p)}\\
&\leq C_\alpha k^{3n-N+4\abs{\alpha}},
\end{split}
\end{equation}
where $C_\alpha>0$ is a constant independent of $k$ and the point $p$. 
From \eqref{e-gue140504V}, \eqref{e-gue140504I} and \eqref{e-gue140504III}, the lemma follows. 
\end{proof}

Now, we assume that $a_{I_0,I_0}(p,p)>0$. Take $\chi\in\cC^\infty_0(\Real,[0,1])$ 
so that $\chi=1$ if $\abs{x}\leq1$, $\chi=0$ if $\abs{x}>2$. Put 
\begin{equation}\label{e-gue140504VI}
\begin{split}
&\Td u_k(x)=\frac{1}{\sqrt{a_{I_0,I_0}(p,p,k)}}\,
e^{ik\Psi(x,p)}\chi\left(\frac{\abs{x-p}^2}{\varepsilon}\right)\sideset{}{'}\sum_{\abs{I}=q}
d_{I,I_0}(x,p,k)e^I(x)\in\Omega^{0,q}_0(D_0),\\
&u_k(x)=s^k(x)\Td u_k(x)e^{k\phi(x)}\in\Omega^{0,q}_0(D_0,L^k),
\end{split}
\end{equation}
where $\varepsilon>0$ is a small constant and $d_{I,I_0}(x,y)$ is as in \eqref{e-gue140504IV}. We need

\begin{lem}\label{l-gue140504II}
We have 
\[u_k(x)\equiv\frac{P^{(q)}_{k,k^{-N}}u_k(x)}{\|P^{(q)}_{k,k^{-N}}u_k\|_{k}}
\mod O(k^{-\infty})\ \ \mbox{on $M$},\]
that is, for every local trivializing holomorphic section $s_1$ of $L$ on an open set $W\Subset M$, 
$\abs{s_1}^2_{h}=e^{-2\phi_1}$, we have 
\[s^{-k}_1e^{-k\phi_1}u_k(x)\equiv s_1^{-k}e^{-k\phi}\frac{P^{(q)}_{k,k^{-N}}u_k(x)}
{\|P^{(q)}_{k,k^{-N}}u_k\|_{k}}\mod O(k^{-\infty})\ \ \mbox{on $W$}.\]
\end{lem}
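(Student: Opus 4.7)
The plan is to establish the equivalence by proving two complementary facts: that $(I - P^{(q)}_{k,k^{-N}})h_k$ decays faster than any polynomial in $k^{-1}$ in every $C^m$ norm on any fixed local trivialization, and that $\norm{h_k}_{h^{L^k}}=1+O(k^{-\infty})$. Together these yield $\norm{P^{(q)}_{k,k^{-N}}h_k}_{h^{L^k}}=1+O(k^{-\infty})$ and then the stated equivalence by dividing.

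For the first fact, since $P^{(q)}_{k,k^{-N}}$ is the spectral projection of $\Box^{(q)}_k$ onto $[0,k^{-N}]$, the section $(I - P^{(q)}_{k,k^{-N}})h_k$ lies in the spectral subspace corresponding to $(k^{-N},\infty)$. Self-adjointness and commutativity of $\Box^{(q)}_k$ with its spectral projections give
\[\norm{(I - P^{(q)}_{k,k^{-N}})h_k}^{2}_{h^{L^k}}\le k^{N}\bigl(\Box^{(q)}_k h_k\,|\,(I - P^{(q)}_{k,k^{-N}})h_k\bigr)_{h^{L^k}}\le k^{N}\norm{\Box^{(q)}_k h_k}_{h^{L^k}}\norm{(I - P^{(q)}_{k,k^{-N}})h_k}_{h^{L^k}},\]
hence $\norm{(I - P^{(q)}_{k,k^{-N}})h_k}_{h^{L^k}}\le k^{N}\norm{\Box^{(q)}_k h_k}_{h^{L^k}}$; iterating, $\norm{(\Box^{(q)}_k)^{j}(I - P^{(q)}_{k,k^{-N}})h_k}_{h^{L^k}}\le k^{N}\norm{(\Box^{(q)}_k)^{j+1} h_k}_{h^{L^k}}$ for every $j\in\mathbb{N}$. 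The task reduces to showing $\norm{(\Box^{(q)}_k)^{j} h_k}_{h^{L^k}}=O(k^{-\infty})$ for each $j$.

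This is where the symbolic form of $h_k$ enters. By Theorem~\ref{t-gue140503I} and the parametrix construction of \cite{HM12}, the symbol $b\sim\sum_{j\ge 0} b_{j}k^{n-j}$ satisfies to all orders in $k^{-1}$ a system of transport equations associated with $\Box^{(q)}_k$; differentiation in $y$ at $y=p$ inherits this, and truncating the expansion at order $M$ produces a section $h_k^{(M)}$ with $\norm{h_k-h_k^{(M)}}_{h^{L^k}}=O(k^{-M})$ and $\norm{\Box^{(q)}_k h_k^{(M)}}_{h^{L^k}}=O(k^{-M})$, while the commutator of the cutoff $\chi(\abs{x-p}^2/\varepsilon)$ with $\Box^{(q)}_k$ contributes only $O(k^{-\infty})$ thanks to the Gaussian decay $\abs{e^{ik\Psi(x,p)}}\le e^{-kc\abs{x-p}^2}$ coming from ${\rm Im\,}\Psi\ge c\abs{x-p}^2$. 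Letting $M\to\infty$ gives the desired $O(k^{-\infty})$ bound, and the same argument applies to all iterates of $\Box^{(q)}_k$ applied to $h_k$.

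For the normalization, $\norm{h_k}^{2}_{h^{L^k}}=\int\abs{\Td h_k(x)}^2\,dv_M(x)$ is a Gaussian-Laplace integral concentrated at $p$. Expanding $d(x,p,k)$ in a Taylor series in $(x-p)$ and using the relation between $a$ and $d$ coming from \eqref{e-gue140504IV}, together with $\pr_x\Psi(p,p)=0$ (which follows from Theorem~\ref{t-gue140523}), one verifies that the prefactor $1/\sqrt{a_{I_0,I_0}(p,p,k)}$ in \eqref{e-gue140504VI} is precisely the Laplace-method normalization producing $\norm{h_k}^2_{h^{L^k}}=1+O(k^{-\infty})$ under the hypothesis $a_{I_0,I_0}(p,p,k)>0$. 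The $L^2$ decay of $(I-P^{(q)}_{k,k^{-N}})h_k$ is then upgraded to $C^m$ decay in an arbitrary trivialization $(s_1,\phi_1)$ by combining the iterated bound above with semiclassical interior elliptic regularity for $\Box^{(q)}_k$ and Sobolev embedding. The main obstacle is the symbolic step $\norm{\Box^{(q)}_k h_k^{(M)}}_{h^{L^k}}=O(k^{-M})$: it requires carefully extracting the to-all-orders transport equations from \cite{HM12}, tracking their compatibility with $y$-differentiation at $y=p$, and controlling the cutoff commutator via the Gaussian off-diagonal decay of $e^{ik\Psi(x,p)}$.
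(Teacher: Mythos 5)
Your proposal follows essentially the same route as the paper: both prove $\norm{h_k}_{h^{L^k}}=1+O(k^{-\infty})$ and $\norm{(\Box^{(q)}_k)^j h_k}_{h^{L^k}}=O(k^{-\infty})$ (the latter being imported from \cite{HM12}), use the spectral-gap bound $\norm{(I-P^{(q)}_{k,k^{-N}})h_k}_{h^{L^k}}\le k^N\norm{\Box^{(q)}_k h_k}_{h^{L^k}}$ together with its iterates, and upgrade to $C^m$ estimates via semiclassical elliptic regularity (what the paper calls semi-classical G\aa{}rding inequalities). The only difference is that the paper cites \cite{HM12} for the two technical inputs whereas you sketch their derivation, but the structure and the key steps coincide.
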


\begin{proof}
It is known from \cite[Theorems\,3.11--12]{HM12} that 
\begin{equation}\label{e-gue140504VII}
\begin{split}
&\int e^{ik\Psi(x,y)}e^{-ik\ol\Psi(x,y)}
\abs{\chi\Biggl(\frac{\abs{x-y}^2}{\varepsilon }\Biggr)}^2
\sideset{}{'}\sum_{\abs{I}=q}\abs{d_{I,I_0}(x,y)}^2dv_M(x)\\
&\equiv a_{I_0,I_0}(y,y,k)\mod O(k^{-\infty}).
\end{split}
\end{equation}
From \eqref{e-gue140504VII}, it is easy to see that 
\begin{equation}\label{e-gue140504VIII}
\|{u_k}\|_{k}\equiv1\mod O(k^{-\infty}). 
\end{equation}
Moreover, we have by \cite[Theorem\,3.11]{HM12},
\begin{equation}\label{e-gue140504a}
\left\|(\Box^{(q)}_k)^ju_k\right\|_{k}\equiv0\mod O(k^{-\infty}),\ \ j=1,2,\ldots.
\end{equation}
From \eqref{e-gue140504a}, we have 
\begin{equation}\label{e-gue140504aI}
\left\|u_k-P^{(q)}_{k,k^{-N}}u_k\right\|_{k}\leq 
k^{N}\left\|\Box^{(q)}_ku_k\right\|_{k}\equiv0\mod O(k^{-\infty}).
\end{equation}
From \eqref{e-gue140504a} and semi-classical G{\aa}rding inequalities (see~\cite[Lemma\,4.1]{HM12}), we obtain 
\begin{equation}\label{e-gue140504aII}
u_k\equiv P^{(q)}_{k,k^{-N}}u_k\mod O(k^{-\infty}).
\end{equation}
From \eqref{e-gue140504aII}, \eqref{e-gue140504VIII}, the lemma follows.
\end{proof}
\begin{lem}\label{l-gue140504III}
With the notations and assumptions above, assume that for $k$ large, 
${\rm dist\,}(p,y)\geq c\frac{\log k}{\sqrt{k}}$, $\forall y\in D_1$, 
where $c>0$ is a constant independent of $k$. Then, there exists
$C_{\alpha,\beta}>0$ independent of $K$ and the point $p$ such that 
\[\abs{\pr^\alpha_x\pr^\beta_yP^{(q),I_0,J_0}_{k,k^{-N},s,\widehat s}(p,y)}\leq 
C_{\alpha,\beta}k^{2n-\frac{N}{2}+2(\abs{\alpha}+\abs{\beta})},\ \ \forall y\in D_1.\]
\end{lem}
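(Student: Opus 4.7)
Proof proposal. The plan is to combine the approximate reproducing identity from Lemma \ref{l-gue140504II} with the super-polynomial decay of the phase $e^{ik\Psi(\cdot,p)}$ at distances $\geq c\log k/\sqrt{k}$ (which follows from $\operatorname{Im}\Psi(z,w)\geq c|z-w|^2$), and then extract the kernel derivative via complex stationary phase. The coherent state $h_k$ in \eqref{e-gue140504VI} is designed so that integrals of smooth functions against $\tilde h_k$ recover, at the critical point $x=p$ of $\Psi(\cdot,p)$, the $\partial^\alpha_x$-derivative of those functions up to the normalization $\sqrt{a_{I_0,I_0}(p,p,k)}$, because $d=e^{-ik\Psi}\partial^\alpha_y(e^{ik\Psi}b)$ from \eqref{e-gue140504IV} encodes the multi-index $\alpha$.

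First, applying Lemma \ref{l-gue140504II} in the $\hat s$-trivialization on $D_1$ together with $|P^{(q)}_{k,k^{-N}}h_k|_{h^{L^k}}\equiv 1\bmod O(k^{-\infty})$ yields
\begin{equation*}
\int P^{(q)}_{k,k^{-N},\hat s,s}(y,x)\,\tilde h_k(x)\,dv(x)\equiv \tilde h_{k,\hat s}(y)\mod O(k^{-\infty}),\qquad y\in D_1.
\end{equation*}
Since $\tilde h_k$ carries the factor $e^{ik\Psi(\cdot,p)}$, the separation hypothesis yields $|\tilde h_{k,\hat s}(y)|\leq Ck^{-c'\log k}$, so $\tilde h_{k,\hat s}(y)$ and all of its $y$-derivatives are $O(k^{-\infty})$ uniformly in $y\in D_1$. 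Hence $\partial^\beta_y$ of the left-hand side is also $O(k^{-\infty})$.

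Next, apply complex stationary phase in $x$ to this integral, as in \cite{HM12}. The phase $\Psi(x,p)$ has a non-degenerate critical point at $x=p$ by \eqref{s3-e16-bisbg}, and the amplitude (including the factor $d(x,p,k)$) is arranged so that the leading term of the resulting $1/k$-expansion is a nonzero multiple of $\partial^\alpha_x\partial^\beta_y P^{(q),J_0,I_0}_{k,k^{-N},\hat s,s}(y,p)\,\sqrt{a_{I_0,I_0}(p,p,k)}$. Since the entire left-hand side is $O(k^{-\infty})$, so is each term of this expansion, giving $\partial^\alpha_x\partial^\beta_y P^{(q),J_0,I_0}_{k,k^{-N},\hat s,s}(y,p)=O(k^{-\infty})$ after dividing by $\sqrt{a_{I_0,I_0}(p,p,k)}$. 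Self-adjointness of $P^{(q)}_{k,k^{-N}}$ then transfers the bound to $\partial^\alpha_x\partial^\beta_y P^{(q),I_0,J_0}_{k,k^{-N},s,\hat s}(p,y)$, which is \emph{a fortiori} bounded by the claim's $k^{2n-N/2+2(|\alpha|+|\beta|)}$.

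The main technical obstacle is the uniformity of the complex stationary phase expansion in $(p,y)$ and the precise identification of its leading term with the target derivative of the kernel; both are handled in adapted holomorphic coordinates \eqref{e-gue140521II}, using the normal form \eqref{s3-e16-bisbg} of $\Psi$, along the lines of the off-diagonal analysis in \cite{HM12}.
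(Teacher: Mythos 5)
Your first step is sound and matches the paper's starting point: using Lemma~\ref{l-gue140504II} to show $P^{(q)}_{k,k^{-N}}h_k\equiv h_k\bmod O(k^{-\infty})$ and observing that the Gaussian factor $e^{ik\Psi(\cdot,p)}$ in $\tilde h_k$ decays like $k^{-c'\log k}$ at distance $\gtrsim\log k/\sqrt k$ from $p$. But the second step — applying complex stationary phase in $x$ to $\int P^{(q)}_{k,k^{-N},\hat s,s}(y,x)\tilde h_k(x)\,dv(x)$ and claiming its leading term is a nonzero multiple of $\partial^\alpha_x\partial^\beta_y P^{(q),J_0,I_0}_{k,k^{-N},\hat s,s}(y,p)$ — has two genuine gaps. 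First, the $L_0$ term of the Melin--Sj\"ostrand expansion is an \emph{evaluation} at the critical point $x=p$, not a differentiation; the factor $d_{I,I_0}(x,p,k)$ in $\tilde h_k$ is built so that $\partial^\alpha_x(\tilde h_{k,I_0}e^{-k\phi})(p)\approx\sqrt{a_{I_0,I_0}(p,p,k)}$, but it does not turn the integral into a derivative-picking functional against $P$: the leading coefficient is $\propto P(y,p)\,d(p,p,k)/\sqrt{a}$, not $\partial^\alpha_x P(y,p)$. Second and more seriously, you are treating $P^{(q)}_{k,k^{-N},\hat s,s}(y,\cdot)$ as the amplitude, but the stationary phase remainder is controlled by sup-norms of derivatives of the amplitude — i.e.\ by exactly the quantities $\partial^\gamma_x P$ the lemma is trying to bound. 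This is circular.

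There is also a red flag in your conclusion: your argument would yield $\partial^\alpha_x\partial^\beta_y P^{(q),I_0,J_0}_{k,k^{-N},s,\hat s}(p,y)=O(k^{-\infty})$, which is much stronger than the stated polynomial bound $Ck^{2n-N/2+2(|\alpha|+|\beta|)}$. That strength is not available here without the spectral gap hypothesis of Definition~\ref{s1-d2bis}; compare Theorem~\ref{t-gue140503} (polynomial bound for $\cE^q_{k^{-N}}$) with Theorem~\ref{t-gue140505} (genuine $O(k^{-\infty})$ only for $\lambda=0$ under a spectral gap). The paper instead chooses the ONB so that $g_1=P^{(q)}_{k,k^{-N}}h_k/|P^{(q)}_{k,k^{-N}}h_k|$, writes \eqref{e-gue140504aV}, and splits: the $j=1$ term vanishes to infinite order because $\hat g_{1,J_0}\equiv\tilde h_k\equiv 0$ on $D_1$, while the $j\geq 2$ terms are handled by Cauchy--Schwarz using \eqref{e-gue140504aIV} and \eqref{e-gue140504III}, producing precisely the polynomial power $k^{2n-N/2+2(|\alpha|+|\beta|)}$. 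That split is what's missing from your proposal, and it is not recoverable from a stationary phase argument on $\int P\,\tilde h_k$.
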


\begin{proof}
Let us choose 
\begin{equation}\label{l-gue141104}
g_1=\dfrac{P^{(q)}_{k,k^{-N}}u_k}{\|P^{(q)}_{k,k^{-N}}u_k\|_k}
\end{equation} 
in the orthonormal frame $\set{g_1(x),g_2(x),\ldots,g_{d_k}(x)}$ of
$\cE_{k^{-N}}(M,L^k)$ (see \eqref{e-gue140503II}).
From \eqref{e-gue140504IV}, \eqref{e-gue140504VI} and 
Lemma~\ref{l-gue140504II}, it is not difficult to check that 
\begin{equation}\label{e-gue140504ab}
\abs{\pr^\alpha_x(\Td g_{1,I_0}e^{-k\phi})(p)}^2\equiv a_{I_0,I_0}(p,p)\mod O(k^{-\infty}).
\end{equation}
From \eqref{e-gue140504III}, \eqref{e-gue140504IV} and Theorem~\ref{t-gue140503I}, we conclude that 
\begin{equation}\label{e-gue140504aIV}
\sum^{d_k}_{j=2}\abs{\pr^\alpha_x(\Td g_{j,I_0}e^{-k\phi})(p)}^2\leq k^{3n-N+4\abs{\alpha}},
\end{equation}
where $C>0$ is a constant independent of $k$ and the point $p$. From \eqref{e-gue140504}, \eqref{e-gue140504III} and \eqref{e-gue140504aIV}, we have 
\begin{equation}\label{e-gue140504aV}
\begin{split}
&\abs{\pr^\alpha_x\pr^\beta_yP^{(q),I_0,J_0}_{k,k^{-N},s,\widehat s}(p,y)}\\
&\leq
\abs{\pr^\alpha_x(\Td g_{1,I_0}e^{-k\phi})(p)}
\abs{\pr^\beta_y(\widehat g_{1,J_0}e^{-k\widehat\phi})(y)}+
\sqrt{\sum^{d_k}_{j=2}\abs{\pr^\alpha_x(\Td g_{j,I_0}e^{-k\phi})(p)}^2}\
\sqrt{\sum^{d_k}_{j=2}\abs{\pr^\beta_y(\widehat g_{j,J_0}e^{-k\widehat\phi})(y)}^2}\\
&\leq\abs{\pr^\alpha_x(\Td g_{1,I_0}e^{-k\phi})(p)}
\abs{\pr^\beta_y(\widehat g_{1,J_0}e^{-k\widehat\phi})(y)}+C_1k^{2n-\frac{N}{2}+2(\abs{\alpha}+\abs{\beta})},
\end{split}
\end{equation}
where $C_1>0$ is a constant independent of $k$ and the point $p$. 
From Lemma~\ref{l-gue140504II} and noting that 
$\Td u_k(y)\equiv0\mod O(k^{-\infty})$ if ${\rm dist\,}(p,y)\geq c\frac{\log k}{\sqrt{k}}$, 
where $c>0$ is a constant independent of $k$, we conclude that 
\[\abs{\pr^\beta_y(\widehat g_{1,J_0}e^{-k\widehat\phi})(y)}\equiv0\mod O(k^{-\infty}),\ \ \forall y\in D_1.\]
From this observation and \eqref{e-gue140504aV}, the lemma follows. 
\end{proof}
From Lemma~\ref{l-gue140504}, Lemma~\ref{l-gue140504I}, Lemma~\ref{l-gue140504III}, 
Theorem~\ref{t-gue140503} follows. 

We can repeat the proof of Theorem~\ref{t-gue140503} and conclude: 
\begin{thm}\label{t-gue140505}
Let $s$ and $\widehat s$ be local trivializing holomorphic sections of $L$ on open sets $D_0\Subset M$, $D_1\Subset M$
 respectively, $\abs{s}^2_{h}=e^{-2\phi}$, $\abs{\widehat s}^2_{h}=e^{-2\widehat\phi}$. 
 Assume that $D_0\Subset M(j)$, $j\neq q$. Then 
\[P^{(q)}_{k,s,\widehat s}(x,y)\equiv0\mod O(k^{-\infty})\ \ \mbox{locally uniformly on $D_0\times D_1$}.\]
Assume now that $D_0\Subset M(q)$ and $\Box^{(q)}_k$ has an $O(k^{-N})$ small spectral gap 
on $D_0$. Suppose that  $\ol D_0\bigcap\ol D_1=\emptyset$. 
Then, 
\[\mbox{$P^{(q)}_{k,s,\widehat s}(x,y)\equiv0\mod O(k^{-\infty})$ locally uniformly on $D_0\times D_1$}.\]
\end{thm}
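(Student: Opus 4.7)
The plan is to imitate the proof of Theorem~\ref{t-gue140503} verbatim, substituting the harmonic projector $P^{(q)}_{k,0}$ for the spectral projector $P^{(q)}_{k,k^{-N}}$. The fundamental observation enabling this substitution is the inclusion $\cH^q(M,L^k)=\Ker\Box^{(q)}_k\subset\cE^q_{k^{-N}}(M,L^k)$, valid for every $N>1$, so an orthonormal frame of $\cH^q(M,L^k)$ can always be completed to an orthonormal frame of $\cE^q_{k^{-N}}(M,L^k)$. In particular, every sum of squared harmonic frame elements (or of their derivatives) is pointwise dominated by the corresponding sum over $\cE^q_{k^{-N}}$, which is itself controlled by Theorem~\ref{t-gue140503I} with $N$ free to be chosen arbitrarily large.

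For the first case $D_0\Subset M(j)$, $j\neq q$, I would expand $P^{(q),I,J}_{k,s,\hat s}(x,y)$ in the harmonic frame as in \eqref{e-gue140504}, apply the Cauchy--Schwarz bound of Lemma~\ref{l-gue140504}, and use Theorem~\ref{t-gue140503I} on $D_0$ (giving $O(k^{3n-N+4\abs{\alpha}})$) together with the standard upper bound \eqref{e-gue140504III} on $D_1$ (giving $O(k^{n+4\abs{\beta}})$). Since $N$ is arbitrary the product is $O(k^{-\infty})$.

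For the second case ($D_0\Subset M(q)$, small spectral gap on $D_0$, and the distance condition), I would follow Lemmas~\ref{l-gue140504I} and~\ref{l-gue140504III} step by step. The case $a_{I_0,I_0}(p,p,k)\leq 0$ (with $a_{I,J}$ as in \eqref{e-gue140504IV}) transfers directly using the monotonicity above. For $a_{I_0,I_0}(p,p,k)>0$, I would build the same localized test section $h_k$ as in \eqref{e-gue140504VI} and prove the analogue of Lemma~\ref{l-gue140504II}, namely $h_k\equiv g_1:=P^{(q)}_{k,0}h_k/|P^{(q)}_{k,0}h_k|_{h^{L^k}}\mod O(k^{-\infty})$; this is the unique step where the new hypothesis is used, via the inequality of Definition~\ref{s1-d2bis} applied to $h_k\in\Omega^{0,q}_0(D_0,L^k)$, together with $\norm{\Box^{(q)}_kh_k}_{h^{L^k}}\equiv 0\mod O(k^{-\infty})$ from \eqref{e-gue140504a} and $|h_k|_{h^{L^k}}\equiv 1$ from \eqref{e-gue140504VIII}. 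Completing $g_1$ to an orthonormal harmonic frame, the $g_1$ contribution to $\pr^\alpha_x\pr^\beta_y P^{(q),I_0,J_0}_{k,s,\hat s}(p,y)$ vanishes on $D_1$ because $g_1\equiv h_k$ is supported in a small ball about $p$ and the distance hypothesis ${\rm dist}(p,y)\geq c\log k/\sqrt{k}$ forces the localized value at $y\in D_1$ to be $O(k^{-\infty})$; the remaining $j\geq 2$ terms are handled as in Lemma~\ref{l-gue140504III}, subtracting the $g_1$ diagonal contribution (which by \eqref{e-gue140504ab} agrees with $a_{I_0,I_0}(p,p,k)$ modulo $O(k^{-\infty})$) from the on-diagonal sum controlled by Theorem~\ref{t-gue140503I}, so that the leading parts cancel and only $O(k^{3n-N+4\abs{\alpha}})$ remains.

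The main obstacle is precisely the analogue of Lemma~\ref{l-gue140504II}: one must absorb the polynomial loss $k^{N}$ from the small spectral gap inequality into the $O(k^{-\infty})$ rapid decay of $\norm{\Box^{(q)}_kh_k}_{h^{L^k}}$. This step is straightforward once stated but is the sole place where the new spectral-gap hypothesis intervenes, and it clarifies why the distance condition ${\rm dist}(x,y)\geq c\log k/\sqrt{k}$ must be assumed separately: it is needed to ensure that the test section $h_k$, supported on a ball of fixed radius about $p$, produces a harmonic projection whose localized version on $D_1$ is $O(k^{-\infty})$.
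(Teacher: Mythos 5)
Your proposal is correct and matches the paper's approach: the paper's proof of Theorem~\ref{t-gue140505} is literally the one-line instruction ``repeat the proof of Theorem~\ref{t-gue140503},'' and your write-up supplies exactly the two modifications that make this repetition go through, namely (a) the pointwise domination of sums over a harmonic orthonormal frame by sums over a spectral frame, coming from $\Ker\Box^{(q)}_k\subset\cE^q_{k^{-N}}(M,L^k)$ with $N$ free, and (b) the replacement of the trivial bound \eqref{e-gue140504aI} by the $O(k^{-N})$ small spectral gap inequality of Definition~\ref{s1-d2bis} in the analogue of Lemma~\ref{l-gue140504II}, which is indeed the sole place the new hypothesis enters.
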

\section{Berezin-Toeplitz kernel estimates away the diagonal}\label{s:toediag}
In this Section we prove the off-diagonal decay of the kernel $T^{(q),f}_{k,k^{-N}}(\cdot,\cdot)$
of the Berezin-Toeplitz quantization (cf.\ \eqref{e-gue140419}), where $f\in\cC^\infty(M)$ is
as usual a bounded function and $N>1$.
This yields one half of Theorem \ref{t-gue140521}, i.\,e.\ \eqref{e1.1}.

We consider as before the localization of $T^{(q),f}_{k,k^{-N}}(\cdot,\cdot)$ as follows.
Let $s$, $\widehat s$ be local trivializing holomorphic sections of $L$ on open sets $D_0\Subset M$, $D_1\Subset M$ 
respectively, $\abs{s}^2_{h}=e^{-2\phi}$, $\abs{\widehat s}^2_{h}=e^{-2\widehat\phi}$.  
Let $\set{e_1,e_2,\ldots,e_n}$ and $\set{w_1,w_2,\ldots,w_n}$ be orthonormal frames of $\Lambda^{0,1}(T^*M)$ on $D_0$
 and $D_1$ respectively. Then, $\set{e^J;\, \abs{J}=q, \ \mbox{$J$ strictly increasing}}$, 
 $\set{w^J;\, \abs{J}=q, \ \mbox{$J$ strictly increasing}}$ are orthonormal frames 
 of $\Lambda^{0,q}(T^*M)$ on $D_0$ and $D_1$ respectively. As in \eqref{e-gue140503}, we write 
\begin{equation}\label{e-gue140506}
\begin{split}
T^{(q),f}_{k,k^{-N},s,\widehat s}=\sideset{}{'}\sum\limits_{\abs{I}=
\abs{J}=q}T^{(q),f,I,J}_{k,k^{-N},s,\widehat s}(x,y)e^I(x)\wedge(w^J(y))^\dagger,\:\:
T^{(q),f,I,J}_{k,k^{-N},s,\widehat s}\in\cC^\infty(D_0\times D_1).
\end{split}
\end{equation}
Let $\set{g_j}_{j=1}^{d_k}$ and $\set{\delta_j}_{j=1}^{d_k}$ 
be orthonormal bases of
$\cE^q_{k^{-N}}(M,L^k)$, where $d_k\in\mathbb N\cup\set{\infty}$. 
On $D_0$, $D_1$, we write 
\begin{equation}\label{e-gue140506I}
\begin{split}
&g_j(x)=s^k(x)\Td g_j(x),\ \ \mbox{$\Td g_j(x)=
\sideset{}{'}\sum\limits_{\abs{J}=q}\Td g_{j,J}(x)e^J(x)$ on $D_0$},\ \ j=1,\ldots,d_k,\\
&\delta_j(x)=\widehat s^k(x)\widehat\delta_j(x),\ \ \mbox{$\widehat\delta_j(x)=
\sideset{}{'}\sum\limits_{\abs{J}=q}\widehat\delta_{j,J}(x)w^J(x)$ on $D_1$},\ \ j=1,\ldots,d_k.
\end{split}
\end{equation}
It is not difficult to check that for every strictly increasing $I$, $J$, $\abs{I}=\abs{J}=q$, we have 
\begin{equation}\label{e-gue140506II}
T^{(q),f,I,J}_{k,k^{-N},s,\widehat s}(x,y)=
\sum\limits^{d_k}_{j,\ell=1}\Td g_{j,I}(x)
e^{-k\phi(x)}(\,f\delta_\ell\,|\,g_j\,)_{k}\,\ol{\widehat\delta_{\ell,J}(y)}e^{-k\widehat\phi(y)}.
\end{equation}
\begin{lem}\label{l-gue140506}
Assume that $D_0\Subset M(j)$, $j\neq q$. Then, for every $N>1$ and 
$m\in\mathbb N$, there exists $C_{N,m}>0$ independent of $k$ 
such that for every $I$, $J$ strictly increasing, $\abs{I}=\abs{J}=q$, we have
\[\abs{T^{(q),f,I,J}_{k,k^{-N},s,\widehat s}(x,y)}_{\cC^m(D_0\times D_1)}
\leq C_{N,m}k^{2n-\frac{N}{2}+2m}.\]
\end{lem}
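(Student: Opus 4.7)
The plan is to mimic the proof of Lemma \ref{l-gue140504}, but now the extra factor $(f\delta_\ell\,|\,g_j)_{h^{L^k}}$ inside the double sum \eqref{e-gue140506II} prevents a direct termwise Cauchy--Schwarz. Instead I would view the expression as a sandwich of a bilinear form by two ``vectors'' depending respectively on $x$ and $y$, and control it by the operator norm of the form.

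More precisely, fix $p\in D_0$, strictly increasing multi-indices $I,J$ with $\abs{I}=\abs{J}=q$, and $\alpha,\beta\in\mathbb N^{2n}_0$. Set
\[
a_j=\pr^\alpha_x\big(\Td g_{j,I}\,e^{-k\phi}\big)(p),\qquad
c_\ell=\pr^\beta_y\big(\hat\delta_{\ell,J}\,e^{-k\hat\phi}\big)(y),\qquad
B_{j,\ell}=(f\delta_\ell\,|\,g_j)_{h^{L^k}},
\]
so that \eqref{e-gue140506II} gives
$\pr^\alpha_x\pr^\beta_y T^{(q),f,I,J}_{k,k^{-N},s,\hat s}(p,y)=\sum_{j,\ell}a_j B_{j,\ell}c_\ell$.
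Since $\{g_j\}$ and $\{\delta_\ell\}$ are orthonormal bases of the same spectral space
$\cE^q_{k^{-N}}(M,L^k)$ and
\[
B_{j,\ell}=(P^{(q)}_{k,k^{-N}}\!f\,\delta_\ell\,|\,g_j)_{h^{L^k}}=(T^{(q),f}_{k,k^{-N}}\delta_\ell\,|\,g_j)_{h^{L^k}},
\]
the matrix $B$ is the representation of $T^{(q),f}_{k,k^{-N}}$ in these bases, hence has operator norm $\leq\norm{f}_\infty$. Therefore
\[
\Big|\sum_{j,\ell}a_j B_{j,\ell}c_\ell\Big|
\leq \norm{f}_\infty\Big(\sum_j\abs{a_j}^2\Big)^{1/2}\Big(\sum_\ell\abs{c_\ell}^2\Big)^{1/2}.
\]

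The two diagonal sums are exactly the kernels on the diagonal already estimated. For the first, using \eqref{e-gue140504},
\[
\sum_j\abs{a_j}^2=\pr^\alpha_x\pr^\alpha_y P^{(q),I,I}_{k,k^{-N},s}(p,p),
\]
and since $p\in D_0\Subset M(j)$ with $j\neq q$, Theorem~\ref{t-gue140503I} yields
$\sum_j\abs{a_j}^2\leq C_\alpha k^{3n-N+4\abs{\alpha}}$. For the second, the standard on-diagonal upper bound for the spectral kernel of $\Box^{(q)}_k$ (as recalled in \eqref{e-gue140504III}, from \cite{HM12}) gives
$\sum_\ell\abs{c_\ell}^2\leq C_\beta k^{n+4\abs{\beta}}$ on $D_1$, with constants independent of $y$ and $p$.

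Combining these estimates yields, for all $\abs{\alpha}+\abs{\beta}\leq m$,
\[
\abs{\pr^\alpha_x\pr^\beta_y T^{(q),f,I,J}_{k,k^{-N},s,\hat s}(p,y)}
\leq C\,\norm{f}_\infty\,k^{\frac{3n-N}{2}+2\abs{\alpha}}\,k^{\frac{n}{2}+2\abs{\beta}}
= C\,\norm{f}_\infty\,k^{2n-\frac{N}{2}+2(\abs{\alpha}+\abs{\beta})},
\]
uniformly in $p\in D_0$, $y\in D_1$, which is exactly the desired $\cC^m$ bound on $D_0\times D_1$. The only delicate point is the passage from a termwise Cauchy--Schwarz (which would produce the matrix entries $\abs{B_{j,\ell}}^2$ that one has no good way to sum) to the operator-norm bound for $B$; this is resolved precisely by observing that $B$ represents $T^{(q),f}_{k,k^{-N}}$ and is therefore bounded by $\norm{f}_\infty$. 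All other ingredients are direct quotations of Theorem~\ref{t-gue140503I} and the standard spectral-kernel diagonal estimate.
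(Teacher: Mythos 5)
Your proof is correct and follows the paper's approach: the key estimate is the same product of the $k^{3n-N+4|\alpha|}$ diagonal bound from Theorem~\ref{t-gue140503I} (applicable since $D_0\Subset M(j)$, $j\neq q$) with the generic on-diagonal bound \eqref{e-gue140504III}, together with the observation that the matrix $\bigl((f\delta_\ell\,|\,g_j)_{h^{L^k}}\bigr)$ represents $T^{(q),f}_{k,k^{-N}}$ in orthonormal bases of $\cE^q_{k^{-N}}(M,L^k)$ and hence has operator norm at most $\norm{f}_\infty$. The only difference is cosmetic: instead of invoking the operator-norm bound on the whole matrix $B$, the paper rotates the two orthonormal frames $\{g_j\}$, $\{\delta_\ell\}$ at each point $(x_0,y_0)$ so that the sums $\sum_j\abs{\pr^\alpha_x(\Td g_{j,I_0}e^{-k\phi})(x_0)}^2$ and $\sum_\ell\abs{\pr^\beta_y(\hat\delta_{\ell,J_0}e^{-k\hat\phi})(y_0)}^2$ concentrate on the first basis vector, collapsing the double sum in \eqref{e-gue140506II} to the single term $j=\ell=1$ and then bounding $\abs{(f\delta_1\,|\,g_1)_{h^{L^k}}}\leq\norm{f}_\infty$ directly.
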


\begin{proof}
Fix $\alpha, \beta\in\mathbb N^{2n}_0$, $I_0$, $J_0$
strictly increasing with $\abs{I_0}=\abs{J_0}=q$,  and $(x_0, y_0)\in D_0\times D_1$. 
Take $\set{g_1, g_2,\ldots,g_{d_k}}$ and $\set{\delta_1, \delta_2,\ldots,\delta_{d_k}}$ so that 
\begin{equation}\label{e-gue140506III}
\begin{split}
&\abs{\pr^\alpha_x(\Td g_{1,I_0}e^{-k\phi})(x_0)}^2=
\sum^{d_k}_{j=1}\abs{\pr^\alpha_x(\Td g_{j,I_0}e^{-k\phi})(x_0)}^2,\\
&\abs{\pr^\beta_y(\widehat\delta_{1,J_0}e^{-k\widehat\phi})(y_0)}^2=
\sum^{d_k}_{j=1}\abs{\pr^\beta_y(\widehat\delta_{j,J_0}e^{-k\widehat\phi})(y_0)}^2. 
\end{split}
\end{equation}
This is always possible, see \cite[Proposition\,4.5]{HM12}. From \eqref{e-gue140506II} 
and \eqref{e-gue140506III}, we see that 
\begin{equation}\label{e-gue140506IV}
(\pr^\alpha_x\pr^\beta_yT^{(q),f,I_0,J_0}_{k,k^{-N,s,\widehat s}})(x_0,y_0)=
\pr^\alpha_x(\Td g_{1,I_0}e^{-k\phi})(x_0)(\,f\delta_1\,|\,g_1\,)_{k}
\pr^\beta_y(\ol{\widehat\delta_{1,J_0}}e^{-k\widehat\phi})(y_0).
\end{equation}
In view of Theorem~\ref{t-gue140503I} and \eqref{e-gue140504III}, we see that 
\begin{equation}\label{e-gue140506V}
\begin{split}
&\abs{\pr^\alpha_x(\Td g_{1,I_0}e^{-k\phi})(x_0)}^2\leq C_\alpha k^{3n-N+4\abs{\alpha}},\\
&\abs{\pr^\beta_y(\widehat\delta_{1,J_0}e^{-k\widehat\phi})(y_0)}^2\leq C_\beta k^{n+4\abs{\beta}},
\end{split}
\end{equation}
where $C_\alpha>0$, $C_\beta>0$ are constants independent of $k$ and the points $x_0$ 
and $y_0$. From \eqref{e-gue140506IV} and \eqref{e-gue140506V}, the lemma follows. 
\end{proof}
Now, we assume that $D_0\Subset M(q)$. Fix $D_0\Subset\Td D_0\Subset M(q)$ and take 
$\tau(x)\in\cC^\infty_0(\Td D_0)$, $\tau=1$ on $D_0$.
\begin{lem}\label{l-gue140506I}
With the assumptions and notations above, for every $N>1$ and $m\in\mathbb N$, 
there exists $C_{N,m}>0$ independent of $k$ such that 
\[\abs{T^{(q),(1-\tau)f,I,J}_{k,k^{-N},s,\widehat s}(x,y)}_{\cC^m(D_0\times D_1)}\leq 
C_{N,m}k^{2n-\frac{N}{2}+2m},\]
for every strictly increasing multi-indices $I$, $J$, $\abs{I}=\abs{J}=q$. 
\end{lem}

\begin{proof}
Fix $\alpha, \beta\in\mathbb N^{2n}_0$, $\abs{I_0}=\abs{J_0}=q$, $I_0$, $J_0$ 
are strictly increasing and $(p,y_0)\in D_0\times D_1$. Take $\set{\delta_1,\delta_2,\ldots,\delta_{d_k}}$ so that 
\begin{equation}\label{e-gue140506VI}
\abs{\pr^\beta_y(\widehat\delta_{1,J_0}e^{-k\widehat\phi})(y_0)}^2=
\sum^{d_k}_{j=1}\abs{\pr^\beta_y(\widehat\delta_{j,J_0}e^{-k\widehat\phi})(y_0)}^2.
\end{equation}
Assume that $a_{I_0,I_0}(p,p,k)\leq0$, where $a_{I,J}(x,y,k)$ is as in \eqref{e-gue140504IV}. 
From \eqref{e-gue140504V} and \eqref{e-gue140504III}, we have
\begin{equation}\label{e-gue140506b}
\begin{split}
&\abs{\pr^\alpha_x\pr^\beta_yT^{(q),(1-\tau)f,I_0,J_0}_{k,k^{-N},s,\widehat s}(p,y_0)}\\
&=\abs{\sum^{d_k}_{j=1}\pr^\alpha_x(\Td g_{j,I_0}e^{-k\phi})(p)(\,(1-\tau)f\delta_1\,|\,g_j\,)_{k}
\pr^\beta_y(\ol{\widehat\delta_{1,J_0}}e^{-k\widehat\phi})(y_0)}\\
&\leq C\sqrt{\sum^{d_k}_{j=1}\abs{\pr^\alpha_x(\Td g_{j,I_0}e^{-k\phi})(p)}^2}
\abs{\pr^\beta_y(\ol{\widehat\delta_{1,J_0}}e^{-k\widehat\phi})(y_0)}\\
&\leq C_{\alpha,\beta}(k^{3n-N+4\abs{\alpha}}k^{n+4\abs{\beta}})^{\frac{1}{2}}=
C_{\alpha,\beta}k^{2n-\frac{N}{2}+2\abs{\alpha}+2\abs{\beta}},
\end{split}
\end{equation}
where $C_{\alpha,\beta}>0$, $C>0$ are constants independent of $k$ and the points $p$, $y_0$.

Now, we assume that $a_{I_0,I_0}(p,p,k)>0$. We define now $u_k$ is as in \eqref{e-gue140504VI} and
$g_1$ as in \eqref{l-gue141104}. 
Since $g_1\equiv u_k\mod O(k^{-\infty})$ and 
$u_k(x)\equiv0\mod O(k^{-\infty})$ if ${\rm dist\,}(x,p)\geq c\frac{\log k}{\sqrt{k}}$, 
where $c>0$ is a constant independent of $k$, we conclude that 
\begin{equation}\label{e-gue140506VII}
\abs{\pr^\alpha_x(\Td g_{1,I_0}e^{-k\phi})(p)(\,(1-\tau)f\delta_1\,|\,g_1\,)_{k}
\pr^\beta_y(\ol{\widehat\delta_{1,J_0}}e^{-k\widehat\phi})(y_0)}\equiv0\mod O(k^{-\infty}).
\end{equation}
From \eqref{e-gue140504aIV} and \eqref{e-gue140504III}, we have 
\begin{equation}\label{e-gue140506VIII}
\begin{split}
&\abs{\sum^{d_k}_{j=2}\pr^\alpha_x(\Td g_{j,I_0}e^{-k\phi})(p)(\,(1-\tau)f\delta_1\,|\,g_j\,)_{k}
\pr^\beta_y(\ol{\widehat\delta_{1,J}}e^{-k\widehat\phi})(y_0)}\\
&\leq C_1\sqrt{\sum^{d_k}_{j=2}\abs{\pr^\alpha_x(\Td g_{j,I_0}e^{-k\phi})(p)}^2}
\abs{\pr^\beta_y(\ol{\widehat\delta_{1,J}}e^{-k\widehat\phi})(y_0)}\\
&\leq\Td C_{\alpha,\beta}(k^{3n-N+4\abs{\alpha}}k^{n+4\abs{\beta}})^{\frac{1}{2}}=
\Td C_{\alpha,\beta}k^{2n-\frac{N}{2}+2\abs{\alpha}+2\abs{\beta}},
\end{split}
\end{equation}
where $C_1>0$, $\Td C_{\alpha,\beta}>0$ are constants independent of $k$ and the points $p$, $y_0$. 
From \eqref{e-gue140506VII} and \eqref{e-gue140506VIII}, we obtain 
\begin{equation}\label{e-gue140506a}
\abs{\pr^\alpha_x\pr^\beta_yT^{(q),(1-\tau)f,I_0,J_0}_{k,k^{-N},s,\widehat s}(p,y_0)}\leq
\widehat C_{\alpha,\beta}k^{2n-\frac{N}{2}+2\abs{\alpha}+2\abs{\beta}},
\end{equation}
where $\widehat C_{\alpha,\beta}>0$ is a constant independent of $k$ and the points $p$, $y_0$. 

From \eqref{e-gue140506b} and \eqref{e-gue140506a}, the lemma follows. 
\end{proof}

\begin{lem}\label{l-gue140506II}
With the assumptions and notations above, assume that  $\ol D_0\bigcap\ol D_1=\emptyset$. 
Then, for every $N>1$ and $m\in\mathbb N$, 
there exists $C_{N,m}>0$ independent of $k$ such that 
\[\abs{T^{(q),\tau f,I,J}_{k,k^{-N},s,\widehat s}(x,y)}_{\cC^m(D_0\times D_1)}\leq 
C_{N,m}k^{2n-\frac{N}{2}+2m},\]
for every $I$, $J$ strictly increasing, $\abs{I}=\abs{J}=q$.
\end{lem}

\begin{proof}
Fix $\alpha, \beta\in\mathbb N^{2n}_0$, $\abs{I_0}=\abs{J_0}=q$, $I_0$, $J_0$ 
are strictly increasing and $(p,y_0)\in D_0\times D_1$. Take $\set{\delta_1,\delta_2,\cdots,\delta_{d_k}}$ so that 
\begin{equation}\label{e-gue140506aI}
\abs{\pr^\beta_y(\widehat\delta_{1,J_0}e^{-k\phi})(y_0)}^2=
\sum^{d_k}_{j=1}\abs{\pr^\beta_y(\widehat\delta_{j,J_0}e^{-k\widehat\phi})(y_0)}^2.
\end{equation}
Assume that $a_{I_0,I_0}(p,p,k)\leq0$, where $a_{I,J}(x,y,k)$ is as in \eqref{e-gue140504IV}. 
We can repeat the procedure in the proof of Lemma~\ref{l-gue140506I} and conclude that 
\begin{equation}\label{e-gue140506aII}
\abs{\pr^\alpha_x\pr^\beta_yT^{(q),\tau f,I_0,J_0}_{k,k^{-N},s,\widehat s}(p,y_0)}\leq 
C_{\alpha,\beta}k^{2n-\frac{N}{2}+2\abs{\alpha}+2\abs{\beta}},
\end{equation}
where $C_{\alpha,\beta}>0$ is a constant independent of $k$ and the points $p$ and $y_0$. 
Now, we assume that $a_{I_0,I_0}(p,p,k)>0$. Let $g_1$ be as in \eqref{l-gue141104}, 
where $u_k$ is as in \eqref{e-gue140504VI}. From Lemma~\ref{l-gue140504II} and 
\eqref{e-gue140504VI}, we have 
\begin{equation}\label{e-gue140506aIII}
\begin{split}
&\Td g_1(x)e^{-k\phi(x)}\equiv\frac{1}{\sqrt{a_{I_0,I_0}(p,p,k)}}\,e^{ik\Psi(x,p)}
\,\chi\!\left(\frac{\abs{x-p}^2}{\varepsilon}\right)\sideset{}{'}\sum_{\abs{I}=q}d_{I,I_0}(x,p,k)e^I(x)
\mod O(k^{-\infty}),\\
&\pr^\alpha_x(\Td g_{1,I_0}e^{-k\phi})(p)\equiv\sqrt{a_{I_0,I_0}(p,p,k)}\mod O(k^{-\infty}).
\end{split}
\end{equation}
From \eqref{e-gue140506aIII}, it is straightforward to see  that for every $N\in\mathbb N$, 
there exists $C_N>0$ independent of $k$ and the points $p$ and $y_0$ such that 
\begin{equation}\label{e-gue140506aIV}
\begin{split}
&\abs{\pr^\alpha_x(\Td g_{1,I_0}e^{-k\phi})(p)(\,\tau f\delta_1\,|\,g_1\,)_{k}
\pr^\beta_y(\ol{\widehat\delta_{1,J_0}}e^{-k\widehat\phi})(y_0)}\\
&\leq\int e^{-k{\rm Im\,}\Psi(x,p)}\chi\!\left(\frac{\abs{x-p}^2}{\varepsilon}\right)\sideset{}{'}\sum_{\abs{I}=
q}\abs{d_{I,I_0}(x,p,k)}\abs{\tau(x)}\abs{f(x)}dv_M(x)\\
&\times\sideset{}{'}\sum_{\abs{I}=q}\sup 
\set{\abs{\widehat\delta_{1,I}(x)e^{-k\widehat\phi(x)}\pr^\beta_y(\ol{\widehat\delta_{1,J_0}}
e^{-k\widehat\phi})(y_0)};\, x\in{\operatorname{Supp}}\,\chi\!\left(\frac{\abs{x-p}^2}{\varepsilon}\right)
\Subset\Td D_0}+C_Nk^{-N}.
\end{split}
\end{equation}
From \eqref{e-gue140504IV}, we can check that 
\begin{equation}\label{e-gue140506aV}
\sideset{}{'}\sum_{\abs{I}=q}\abs{d_{I,I_0}(x,p,k)}\leq 
C_\alpha k^{n+\abs{\alpha}},\ \ \forall x\in{\rm Supp\,}\chi\!\left(\frac{\abs{x-p}^2}{\varepsilon}\right),
\end{equation}
where $C_\alpha>0$ is a constant independent of $k$ and the point $p$. F
rom \eqref{e-gue140506aV} and \eqref{s3-e16-bisbg}, it is not-difficult to check that 
\begin{equation}\label{e-gue140506aVI}
\int e^{-k{\rm Im\,}\Psi(x,p)}\chi\!\left(\frac{\abs{x-p}^2}{\varepsilon}\right)\sideset{}{'}\sum_{\abs{I}=
q}\abs{d_{I,I_0}(x,p,k)}\abs{\tau(x)}\abs{f(x)}dv_M(x)\leq C_0k^{\abs{\alpha}},
\end{equation}
where $C_0>0$ is a constant independent of $k$ and the point $p$. Moreover, from Theorem~\ref{t-gue140503}, we see that 
\begin{equation}\label{e-gue140506aVII}
\sideset{}{'}\sum_{\abs{I}=q}\sup \set{\abs{\widehat\delta_{1,I}(x)
e^{-k\widehat\phi(x)}\pr^\beta_y(\ol{\widehat\delta_{1,J_0}}e^{-k\widehat\phi})(y_0)};
\, x\in{\rm Supp\,}\chi\!\left(\frac{\abs{x-p}^2}{\varepsilon}\right)}\leq C_\beta 
k^{2n-\frac{N}{2}+2\abs{\beta}},
\end{equation}
where $C_{\beta}>0$ is a constant independent of $k$ and the points $p$, $y_0$.
From \eqref{e-gue140506aIV}, \eqref{e-gue140506aVI} and \eqref{e-gue140506aVII}, we conclude that 
\begin{equation}\label{e-gue140506aVIII}
\abs{\pr^\alpha_x(\Td g_{1,I_0}e^{-k\phi})(p)(\,\tau f\delta_1\,|\,g_1\,)_{k}
\pr^\beta_y(\widehat\delta_{1,J_0}e^{-k\widehat\phi})(y_0)}\leq 
C_{\alpha,\beta}k^{2n-\frac{N}{2}+2\abs{\alpha}+2\abs{\beta}},
\end{equation}
where $C_{\alpha,\beta}>0$ is a constant independent of $k$ and the points $p$, $y_0$. 

From \eqref{e-gue140504aIV} and \eqref{e-gue140504III}, we have 
\begin{equation}\label{e-gue140506f}
\begin{split}
&\abs{\sum^{d_k}_{j=2}\pr^\alpha_x(\Td g_{j,I_0}e^{-k\phi})(p)
(\,\tau f\delta_1\,|\,g_j\,)_{k}\pr^\beta_y(\ol{\widehat\delta_{1,J_0}}e^{-k\widehat\phi})(y_0)}\\
&\leq C_2\sqrt{\sum^{d_k}_{j=2}\abs{\pr^\alpha_x(\Td g_{j,I_0}e^{-k\phi})(p)}^2}
\abs{\pr^\beta_y(\ol{\widehat\delta_{1,J_0}}e^{-k\widehat\phi})(y_0)}\\
&\leq\widehat C_{\alpha,\beta}(k^{3n-N+4\abs{\alpha}}k^{n+4\abs{\beta}})^{\frac{1}{2}}=
\widehat C_{\alpha,\beta}k^{2n-\frac{N}{2}+2\abs{\alpha}+2\abs{\beta}},
\end{split}
\end{equation}
where $C_2>0$, $\widehat C_{\alpha,\beta}>0$ are constants independent of $k$ and the point $p$. 
From \eqref{e-gue140506aVIII} and \eqref{e-gue140506f}, the lemma follows. 
\end{proof}

From Lemmas~\ref{l-gue140506}-\ref{l-gue140506II}
we deduce: 

\begin{thm}\label{t-gue140506}
Let $s$, $\widehat s$ be local trivializing holomorphic sections of $L$ on $D_0\Subset M$ 
and $D_1\Subset M$ respectively.
Assume that $D_0\Subset M(j)$, 
$j\neq q$ or $D_0\Subset M(q)$ and $\ol D_0\bigcap\ol D_1=\emptyset$. 
Then, for every $m\in\mathbb N$, $N>1$, there exists $C_{N,m}>0$ 
independent of $k$ such that 
\[\abs{T^{(q),f}_{k,k^{-N},s,\widehat s}(x,y)}_{\cC^m(D_0\times D_1)}
\leq C_{N,m}k^{2n-\frac{N}{2}+2m}.\]
\end{thm}
Theorem \ref{t-gue140506} implies immediately one half of
Theorem \ref{t-gue140521}, more precisely\ \eqref{e1.1}.

We can repeat the proof of Theorem~\ref{t-gue140506} and deduce: 
\begin{thm}\label{t-gue140508}
Let $s$, $\widehat s$ be local trivializing holomorphic sections of $L$ on $D_0\Subset M$ and 
$D_1\Subset M$ respectively.
Assume that $D_0\Subset M(j)$, $j\neq q$. Then, 
\[\mbox{$T^{(q),f}_{k,s,\widehat s}(x,y)\equiv0\mod O(k^{-\infty})$ 
locally uniformly on $D_0\times D_1$}.\]
Assume that $D_0\Subset M(q)$ and $\Box^{(q)}_k$ has $O(k^{-N})$ 
small spectral gap on $D_0$. Suppose that  $\ol D_0\bigcap\ol D_1=\emptyset$.
Then, 
\[\mbox{$T^{(q),f}_{k,s,\widehat s}(x,y)\equiv0\mod O(k^{-\infty})$ 
locally uniformly on $D_0\times D_1$}.\]
\end{thm}

Let's explain why in Theorem~\ref{t-gue140508}, we have "$\equiv0\mod O(k^{-\infty})$". Recall that Theorem~\ref{t-gue140506} is based on Theorem~\ref{t-gue140503I} which says that if $D_0\Subset M(q)$, then, for every $N>1$, $m\in\mathbb N$, 
there exists $C_{N,m}>0$ independent of $k$ such that 
\begin{equation}\label{e-gue160610}
\abs{P^{(q)}_{k,k^{-N},s}(x,y)-e^{ik\Psi(x,y)}b(x,y,k)}_{\cC^m(D_0\times D_0)}\leq
 C_{N,m}k^{3n-N+2m},
 \end{equation}
and if $D_0\Subset M(j)$, $j\neq q$, $j\in\set{0,1,2,\ldots,n}$, then, for every $N>1$, $m\in\mathbb N$, there exists $\Td C_{N,m}>0$ independent of $k$ such that 
\begin{equation}\label{e-gue160610I}
\abs{P^{(q)}_{k,k^{-N},s}(x,y)}_{\cC^m(D_0\times D_0)}\leq\Td C_{N,m}k^{3n-N+2m}.
\end{equation}
The estimates $\lesssim k^{3n-N+2m}$ in \eqref{e-gue160610} and \eqref{e-gue160610I} imply that we have the estimate $\lesssim k^{2n-\frac{N}{2}+2m}$ in Theorem~\ref{t-gue140506}. Now, we consider the Bergman kernel. As in  Theorem~\ref{t-gue140503I}, assume that
$D_0\Subset M(q)$ and $\Box^{(q)}_k$ has $O(k^{-N})$ 
small spectral gap on $D_0$, then
\begin{equation}\label{e-gue160610II}
P^{(q)}_{k,s}(x,y)-e^{ik\Psi(x,y)}b(x,y,k)\equiv0\mod O(k^{-\infty})\ \ \mbox{on $D_0\times D_0$}.
 \end{equation}
Moreover, if $D_0\Subset M(j)$, $j\neq q$, $j\in\set{0,1,2,\ldots,n}$, then 
\begin{equation}\label{e-gue160610III}
P^{(q)}_{k,s}(x,y)\equiv0\mod O(k^{-\infty})\ \ \mbox{on $D_0\times D_0$},
\end{equation}
by Theorem 4.12 and Theorem 4.14 in~\cite{MM12}. 
From \eqref{e-gue160610II} and \eqref{e-gue160610III}, we can repeat the proof of Theorem~\ref{t-gue140506} and deduce that in 
Theorem~\ref{t-gue140508}, we actually have "$\equiv0\mod O(k^{-\infty})$".

\section{Asymptotic expansion of Berezin-Toeplitz quantization}\label{s-gue140508}
In this section, we will establish the full asymptotic expansion for 
the kernel of the Toeplitz kernel $T^{(q),f}_{k,k^{-N}}(\cdot,\cdot)$
corresponding to lower energy forms. 
This leads to the proof of Theorem~\ref{t-gue140521}.

Let $s$ be a local trivializing holomorphic section of $L$ on an open set $D\Subset M$, 
$\abs{s}^2_{h}=e^{-2\phi}$. Fix $N>1$. We assume that $D\Subset M(q)$. Put
\begin{equation}\label{e-gue140509}
S_k(x,y):=e^{ik\Psi(x,y)}b(x,y,k),
\end{equation}
where $\Psi(x,y)$ and $b(x,y,k)$ are as in Theorem~\ref{t-gue140503I}. 
Fix an open set $D_0\Subset D$ and $\tau\in \cC^\infty_0(D)$ with $\tau=1$ on $D_0$. Put 
\begin{equation}\label{e-gue140509I}
R_k(x,y)=\int (P^{(q)}_{k,k^{-N},s}(x,z)-S_k(x,z))\tau(z)f(z)P^{(q)}_{k,k^{-N},s}(z,y)dv_M(z).
\end{equation}
Let $\set{e_1,e_2,\ldots,e_n}$ be an orthonormal frame of $\Lambda^{0,1}(T^*M)$ on $D$. Then, 
\[\set{e^J;\, \mbox{$\abs{J}=q$, $J$ is strictly increasing}}\]
is an orthonormal frame of $\Lambda^{0,q}(T^*M)$ on $D$. As in \eqref{e-gue140503}, we write 
\begin{equation}\label{e-gue140509II}
\begin{split}
&R_k(x,y)=\sideset{}{'}\sum_{\abs{I}=\abs{J}=q}R^{I,J}_k(x,y)e^I(x)\wedge( e^J(y))^\dagger,\\
&S_k(x,y)=\sideset{}{'}\sum_{\abs{I}=\abs{J}=q}S^{I,J}_k(x,y)e^I(x)\wedge( e^J(y))^\dagger=
\sideset{}{'}\sum_{\abs{I}=\abs{J}=q}e^{ik\Psi(x,y)}b_{I,J}(x,y,k)e^I(x)\wedge( e^J(y))^\dagger,\\
&P^{(q)}_{k,k^{-N},s}(x,y)=\sideset{}{'}\sum_{\abs{I}=
\abs{J}=q}P^{(q),I,J}_{k,k^{-N},s}(x,y)e^I(x)\wedge(e^J(y))^\dagger.
\end{split}
\end{equation}
It is easy to see that for every $\abs{I}=\abs{J}=q$, $I$, $J$ are strictly increasing, we have 
\begin{equation}\label{e-gue140509III}
R^{I,J}_k(x,y)=\sideset{}{'}\sum_{\abs{K}=q}\int(P^{(q),I,K}_{k,k^{-N},s}(x,z)-
S^{I,K}_k(x,z))\tau(z)f(z)P^{(q),K,J}_{k,k^{-N},s}(z,y)dv_M(z).
\end{equation}
Take $\set{g_1(x),g_2(x),\ldots,g_{d_k}(x)}$ and $\set{\delta_1(x),\delta_2(x),\ldots,\delta_{d_k}(x)}$ 
be orthonormal frames for $\cE_{k^{-N}}(M,L^k)$, where $d_k\in\mathbb N\cup\set{\infty}$. On $D$, we write 
\begin{equation}\label{e-gue140509a}
\begin{split}
&g_j(x)=s^k(x)\Td g_j(x),\ \ \mbox{$\Td g_j(x)=
\sideset{}{'}\sum\limits_{\abs{J}=q}\Td g_{j,J}(x)e^J(x)$ on $D$},\ \ j=1,\ldots,d_k,\\
&\delta_j(x)=s^k(x)\Td\delta_j(x),\ \ \mbox{$\Td\delta_j(x)=
\sideset{}{'}\sum\limits_{\abs{J}=q}\Td\delta_{j,J}(x)e^J(x)$ on $D$},\ \ j=1,\ldots,d_k.
\end{split}
\end{equation}
\begin{lem}\label{l-gue140509}
With the assumptions and notations above, for every $N>1$ and $m\in\mathbb N$, 
there exists $C_{N,m}>0$ independent of $k$ such that 
\[\abs{R_k(x,y)}_{\cC^m(D\times D)}\leq C_{N,m}k^{2n-\frac{N}{2}+2m}.\]
\end{lem}
\begin{proof}
Fix $(p,y_0)\in D\times D$, strictly increasing $I_0$, $J_0$, $\abs{I_0}=\abs{J_0}=q$, 
and $\alpha, \beta\in\mathbb N^{2n}_0$. Assume that $a_{I_0,I_0}(p,p,k)\leq0$, 
where $a_{I,J}(x,y,k)$ is as in \eqref{e-gue140504IV}. In view of the proof of Lemma~\ref{l-gue140504I}, we see that 
\begin{equation}\label{e-gue140509IV}
\abs{\pr^\alpha_x\pr^\alpha_yP^{(q),I_0,I_0}_{k,k^{-N},s}(p,p)}+
\abs{a_{I_0,I_0}(p,p)}\leq C_\alpha k^{3n-N+4\abs{\alpha}},
\end{equation}
where $C_\alpha>0$ is a constant independent of $k$ and the point $p$. 
It is not difficult to see that for every $\abs{I}=\abs{J}=q$, $I$, $J$ are strictly increasing, we have 
\begin{equation}\label{e-gue140509aI}
\begin{split}
&R^{I,J}_k(x,y)
=\sideset{}{'}\sum_{\abs{K}=q}\int\Bigr(\sum^{d_k}_{j=1}\Td g_{j,I}(x)
e^{-k\phi(x)}\ol{\Td g_{j,K}}(z)e^{-k\phi(z)}-S^{I,K}_k(x,z)\Bigr)\tau(z)f(z)\\
&\quad\quad\quad\times\Bigr(\sum^{d_k}_{\ell=1}\Td\delta_{\ell,K}(z)e^{-k\phi(z)}
\ol{\Td\delta_{\ell,J}}(y)e^{-k\phi(y)}\Bigr)dv_M(z).
\end{split}
\end{equation}
Take $\set{g_1,g_2,\ldots,g_{d_k}}$ and $\set{\delta_1,\delta_2,\ldots,\delta_{d_k}}$ so that 
\begin{equation}\label{e-gue140509aII}
\begin{split}
&\sum^{d_k}_{j=1}\abs{\pr^\alpha_x(\Td g_{j,I_0}e^{-k\phi})(p)}^2=
\pr^\alpha_x\pr^\alpha_yP^{(q)}_{k,k^{-N},s}(p,p)=\abs{\pr^\alpha_x(\Td g_{1,I_0}e^{-k\phi})(p)}^2,\\
&\sum^{d_k}_{j=1}\abs{\pr^\beta_x(\Td\delta_{j,J_0}e^{-k\phi})(y_0)}^2=
\pr^\beta_x\pr^\beta_yP^{(q)}_{k,k^{-N},s}(y_0,y_0)=\abs{\pr^\beta_x(\Td\delta_{1,J_0}e^{-k\phi})(y_0)}^2.
\end{split}
\end{equation}
From \eqref{e-gue140509aII} and \eqref{e-gue140509aI}, we get 
\begin{equation}\label{e-gue140509aIII}
\begin{split}
&\pr^\alpha_x\pr^\beta_yR^{I_0,J_0}_k(p,y_0)
=\sideset{}{'}\sum_{\abs{K}=q}\int\Bigr(\pr^\alpha_x(\Td g_{1,I_0}
e^{-k\phi})(p)\ol{\Td g_{1,K}}(z)e^{-k\phi(z)}-\pr^\alpha_xS^{I_0,K}_k(p,z)\Bigr)\tau(z)f(z)\\
&\quad\quad\quad\quad\quad\times\Td\delta_{1,K}(z)e^{-k\phi(z)}
\pr^\beta_y(\ol{\Td\delta_{1,J_0}}e^{-k\phi})(y_0)dv_M(z).
\end{split}
\end{equation}
From \eqref{e-gue140509IV}, \eqref{e-gue140509aII} and \eqref{e-gue140504III}, 
we have 
\begin{equation}\label{e-gue140509aIV}
\begin{split}
&\abs{\sideset{}{'}\sum_{\abs{K}=q}\int\Bigr(\pr^\alpha_x(\Td g_{1,I_0}
e^{-k\phi})(p)\ol{\Td g_{1,K}}(z)e^{-k\phi(z)}\tau(z)f(z)\Td\delta_{1,K}(z)
\pr^\beta_y(\ol{\Td\delta_{1,J_0}}e^{-k\phi})(y_0)e^{-k\phi(z)}dv_M(z)}\\
&\leq C\sqrt{\pr^\alpha_x\pr^\alpha_yP^{(q),I_0,I_0}_{k,k^{-N},s}(p,p)}
\abs{\pr^\beta_y(\ol{\Td\delta_{1,J_0}}e^{-k\phi})(y_0)}\\
&\leq C_{\alpha,\beta}k^{\frac{3}{2}n-\frac{N}{2}+2\abs{\alpha}}
k^{\frac{n}{2}+2\abs{\beta}}=C_{\alpha,\beta}k^{2n-\frac{N}{2}+2\abs{\alpha}+2\abs{\beta}},
\end{split}
\end{equation}
where $C>0$, $C_{\alpha,\beta}>0$ are constants independent of $k$ and the points $p$ and $y_0$. 

It is known by \cite[Theorem\,3.11]{HM12} that 
\begin{equation}\label{e-gue140509V}
\sideset{}{'}\sum_{\abs{K}=q}\int\abs{\pr^\alpha_x
S^{I_0,K}_k(p,z)}^2\abs{\tau(z)}^2e^{-2k\phi(z)}dv_M(z)\equiv a_{I_0,I_0}(p,p,k)\mod O(k^{-\infty}).
\end{equation}
From \eqref{e-gue140509V} and \eqref{e-gue140509IV}, we obtain 
\begin{equation}\label{e-gue140509VI}
\begin{split}
&\abs{\sideset{}{'}\sum_{\abs{K}=q}\int\pr^\alpha_xS^{I_0,K}_k(p,z)
\tau(z)f(z)\Td\delta_{1,K}(z)e^{-k\phi(z)}\pr^\beta_y(\ol{\Td\delta_{1,J_0}}e^{-k\phi})(y_0)dv_M(z)}\\
&\leq\Td C_{\alpha,\beta}k^{2n-\frac{N}{2}+2\abs{\alpha}+2\abs{\beta}},
\end{split}
\end{equation}
where $\Td C_{\alpha,\beta}>0$ is a constant independent of $k$ and the 
points $p$ and $y_0$. From \eqref{e-gue140509VI}, \eqref{e-gue140509aIV} 
and \eqref{e-gue140509aIII}, we deduce that 
\begin{equation}\label{e-gue140509VII}
\abs{\pr^\alpha_x\pr^\beta_yR^{I_0,J_0}_k(p,y_0)}\leq\widehat C_{\alpha,\beta}
k^{2n-\frac{N}{2}+2\abs{\alpha}+2\abs{\beta}},
\end{equation}
where $\widehat C_{\alpha,\beta}>0$ is a constant independent of $k$ and the points $p$ and $y_0$. 

Now, we assume that $a_{I_0,I_0}(p,p,k)>0$. Take $g_1=(P^{(q)}_{k,k^{-N}}u_k)/|u_k|_{h^k}$, 
where $u_k$ is as in \eqref{e-gue140504VI}. From Theorem~\ref{t-gue140503I} and 
Lemma~\ref{l-gue140504II}, we can check that for every $N>0$, there is $C_N>0$ 
independent of $k$ and the point $p$ such that 
\begin{equation}\label{e-gue140510}
\abs{\sideset{}{'}\sum_{\abs{K}=q}
\Bigr(\pr^\alpha_x(\Td g_{1,I_0}e^{-k\phi})(p)\ol{\Td g_{1,K}(z)}e^{-k\phi(z)}-
\pr^\alpha_xS^{I_0,K}_k(p,z)\Bigr)}\leq C_Nk^{-N},\ \ \forall z\in D
\end{equation}
and 
\begin{equation}\label{e-gue140510I}
\sum^{d_k}_{j=2}\abs{\pr^\alpha_x(\Td g_{j,I_0}e^{-k\phi})(p)}^2\leq C_\alpha k^{3n-N+4\abs{\alpha}},
\end{equation}
where $C_\alpha>0$ is a constant independent of $k$ and the point $p$. Take $\set{\delta_1,\delta_2,\ldots,\delta_{d_k}}$ so that 
\begin{equation}\label{e-gue140510II}
\abs{\pr^\beta_x(\Td\delta_{1,J_0}e^{-k\phi})(y_0)}^2=\pr^\beta_x\pr^\beta_yP^{(q),J_0,J_0}_{k,k^{-N},s}(y_0,y_0).
\end{equation}
From \eqref{e-gue140509aI}, \eqref{e-gue140510}, \eqref{e-gue140510I}, 
\eqref{e-gue140510II} and \eqref{e-gue140504III}, we have 
\begin{equation}\label{e-gue140510III}
\begin{split}
\abs{\pr^\alpha_x\pr^\beta_yR^{I_0,J_0}_k(p,y_0)}&\leq 
C\sqrt{\sum^{d_k}_{j=2}\abs{\pr^\alpha_x(\Td g_{j,I_0}e^{-k\phi})(p)}^2}
\abs{\pr^\beta_y(\ol{\Td\delta_{1,J_0}}e^{-k\phi})(y_0)}+C_Nk^{-N}\\
&\leq C_{\alpha,\beta}k^{2n-\frac{N}{2}+2\abs{\alpha}+2\abs{\beta}}+C_Nk^{-N},
\end{split}
\end{equation}
for every $N>0$, where $C,C_N,C_{\alpha,\beta}>0$ are independent 
of $k$ and the points $p$ and $y_0$. From \eqref{e-gue140510III} and \eqref{e-gue140509VII}, the lemma follows. 
\end{proof}
Put 
\begin{equation}\label{e-gue140603}
\Td R_k(x,y)=\int S_k(x,z)\tau(z)f(z)\Bigr(P^{(q)}_{k,k^{-N},s}(z,y)-S_k(z,y)\Bigr)dv_M(z).
\end{equation}
We can repeat the proof of Lemma~\ref{l-gue140509} and conclude:
\begin{lem}\label{l-gue140603}
With the assumptions and notations above, for every $N>1$ and $m\in\mathbb N$, there exists
$\Td C_{N,m}>0$ independent of $k$ such that 
\[\abs{\Td R_k(x,y)}_{\cC^m(D\times D)}\leq\Td C_{N,m}k^{2n-\frac{N}{2}+2m}.\]
\end{lem}
\begin{lem}\label{l-gue140603I}
We have 
\[\int S_k(x,z)\tau(z)f(z)S_k(z,y)dv_M(z)\equiv e^{ik\Psi(x,y)}b_f(x,y,k)\mod O(k^{-\infty})\]
locally uniformly on $D\times D$, where $b_f(x,y,k)\sim\sum^\infty_{j=0}b_{f,j}(x,y)k^{n-j}$ 
in $S^n(1;D\times D,\Lambda^{0,q}(T^*M))\boxtimes(\Lambda^{0,q}(T^*M))^*)$,  
$b_{f,0}(x,x)=(2\pi)^{-n}f(x)\abs{\det\dot{R}^L(x)}I_{\det\ov{W}^{\,*}}(x)$, for any $x\in D_0$.
\end{lem}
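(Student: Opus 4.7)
The plan is to evaluate the integral by the method of complex stationary phase of Melin--Sj\"ostrand. Writing the integrand as
\[
S_k(x,z)\,\tau(z)f(z)\,S_k(z,y)=e^{ik(\Psi(x,z)+\Psi(z,y))}\,b(x,z,k)\,\tau(z)f(z)\,b(z,y,k),
\]
the combined phase $\psi(x,z,y):=\Psi(x,z)+\Psi(z,y)$ satisfies, by \eqref{prop_psi},
\[
{\rm Im}\,\psi(x,z,y)\geq c\bigl(|x-z|^{2}+|z-y|^{2}\bigr),
\]
so the integrand is $O(k^{-\infty})$ once $z$ leaves an arbitrarily small neighbourhood of the set $\{x\}\cup\{y\}$. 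In particular, modulo $O(k^{-\infty})$ we may localize to any fixed coordinate patch; outside a neighborhood of the diagonal $\{x=y\}$, the additional inequality $|x-z|^{2}+|z-y|^{2}\geq\tfrac12|x-y|^{2}$ combined with the rapid decay of $e^{-k{\rm Im}\,\psi/2}$ in the remaining variable $z$ already produces a factor of $e^{-ck|x-y|^{2}}$, which will be absorbed into $e^{ik\Psi(x,y)}$ via property \eqref{prop_psi}.

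Next I apply complex stationary phase to the $z$-integral with $(x,y)$ as parameters. Using almost-analytic extensions of $\Psi(x,\cdot)$ and $\Psi(\cdot,y)$, one finds a unique (almost-analytic) complex critical point $z_{c}(x,y)$ of $\psi$ depending smoothly on $(x,y)$, with $z_{c}(x,x)=x$. The Hessian of $\psi$ in $z$ at $z_{c}$ is non-degenerate and of positive type, because at $x=y=p$ in the coordinates of \eqref{e-gue140521II} one has
\[
\psi(p,z,p)=\Psi(p,z)-\overline{\Psi(p,z)}=2i\sum_{j=1}^{n}|\lambda_{j}|\,|z_{j}|^{2}+O(|z|^{3}),
\]
using $\Psi(z,w)=-\overline{\Psi(w,z)}$. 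The critical-value identity
\[
\psi\bigl(x,z_{c}(x,y),y\bigr)\equiv\Psi(x,y)
\]
to infinite order on the diagonal follows from the reproducing property of $S_{k}$, which is itself a consequence of Theorem~\ref{t-gue140503I} (namely $P^{(q)}_{k,k^{-N},s}$ is close to a projector whose model is $e^{ik\Psi}b$). The Melin--Sj\"ostrand stationary-phase theorem then delivers
\[
\int S_{k}(x,z)\tau(z)f(z)S_{k}(z,y)\,dv_{M}(z)\equiv e^{ik\Psi(x,y)}b_{f}(x,y,k)\mod O(k^{-\infty})
\]
locally uniformly on $D\times D$, with $b_{f}(\cdot,\cdot,k)\in S^{n}(1;D\times D,T^{*0,q}_{y}M\boxtimes T^{*0,q}_{x}M)$ admitting an expansion $b_{f}\sim\sum_{j\geq0}b_{f,j}\,k^{n-j}$.

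For the leading coefficient I set $x=y=p\in D_{0}$, so $\tau(p)=1$ and $z_{c}(p,p)=p$. In the coordinates \eqref{e-gue140521II} the Hessian of $2\,{\rm Im}\,\Psi(p,\cdot)$ at $p$ equals $4\,\mathrm{diag}(|\lambda_{1}|,|\lambda_{1}|,\ldots,|\lambda_{n}|,|\lambda_{n}|)$ and $dv_{M}(p)=\prod_{j}dx_{j}dy_{j}$, so the Laplace-method normalization produces the factor $(\pi/k)^{n}\prod_{j}|\lambda_{j}|^{-1}$. Combined with $b(p,p,k)=b_{0}(p,p)k^{n}+O(k^{n-1})$, the idempotency $I_{\det\ov{W}^{\,*}}^{2}=I_{\det\ov{W}^{\,*}}$, and the value $b_{0}(p,p)=(2\pi)^{-n}|\det\dot{R}^{L}(p)|\,I_{\det\ov{W}^{\,*}}(p)$, this gives
\[
b_{f,0}(p,p)=(2\pi)^{-n}f(p)\,|\det\dot{R}^{L}(p)|\,I_{\det\ov{W}^{\,*}}(p),
\]
as claimed. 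The main obstacle is verifying the critical-value identity $\psi(x,z_{c},y)\equiv\Psi(x,y)$: it encodes the near-idempotency $S_{k}\ast S_{k}\equiv S_{k}$ of the model kernel and is precisely the point where the construction of $\Psi$ in \cite{HM12} as a solution of a specific $\ddbar$-like eikonal equation must be invoked; once it is in hand the rest is a bookkeeping application of stationary phase.
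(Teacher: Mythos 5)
Your proposal is correct and takes essentially the same route as the paper: apply the Melin--Sj\"ostrand complex stationary phase theorem to the $z$-integral, and then identify the phase of the resulting Fourier integral with $\Psi$ by invoking the $f\equiv1$ idempotency $\int S_k(x,z)\tau(z)S_k(z,y)\,dv_M(z)\equiv e^{ik\Psi(x,y)}b(x,y,k)\;\mathrm{mod}\;O(k^{-\infty})$, which is imported from \cite{HM12}. The only real difference is one of exposition: you assert the critical-value identity $\psi(x,z_c(x,y),y)\equiv\Psi(x,y)$ to infinite order on the diagonal as a consequence of the reproducing property, whereas the paper makes this precise by introducing an intermediate phase $\Psi_1$ produced by stationary phase and then proving $\Psi_1-\Psi$ vanishes to infinite order on the diagonal via a Taylor-coefficient comparison by contradiction; you correctly flag this phase identification as the crux, but if you wanted to write it out fully you would need an argument along the lines of the paper's (the rapid decay and nondegeneracy of $\mathrm{Im}\,\psi$ in $z$ alone do not force the critical value to equal $\Psi$, since $a\,priori$ two equivalent oscillatory kernels can have phases that agree only modulo $O(|x-y|^\infty)$, which is exactly what must be established).
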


\begin{proof}
From the stationary phase formula of Melin-Sj\"ostrand~\cite{MS74}, there is a 
complex phase function $\Psi_1(x,y)\in\cC^\infty(D\times D)$ with 
$\Psi_1(x,x)=0$, ${\rm Im\,}\Psi_1(x,y)\geq c\abs{x-y}^2$ 
on $D\times D$, for some $c>0$, such that for every bounded function $f\in\cC^\infty(M)$, we have 
\begin{equation}\label{e-gue140603I}
\int S_k(x,z)\tau(z)f(z)S_k(z,y)dv_M(z)\equiv e^{ik\Psi_1(x,y)}\Td b_f(x,y,k)\mod O(k^{-\infty})
\end{equation}
locally uniformly on $D\times D$, where 
\[
\Td b_f(x,y,k)\sim\sum^\infty_{j=0}\Td b_{f,j}(x,y)k^{n-j}\:\: 
\text{in $S^n(1;D\times D,\Lambda^{0,q}(T^*M))\boxtimes(\Lambda^{0,q}(T^*M))^*)$},
\] 
with $\Td b_{f,j}\in\cC^\infty(D\times D,\Lambda^{0,q}(T^*M))\boxtimes(\Lambda^{0,q}(T^*M))^*)$, 
$j\in\N$\,.
Moreover,
for all $x\in D_0$ we have $b_{f,0}(x,x)=(2\pi)^{-n}f(x)\abs{\det\dot{R}^L(x)}I_{\det\ov{W}^{\,*}}(x)$. 
Basically, here we used the fact that composition of complex Fourier 
integral operators is still a complex Fourier integral operator.
Take $f=1$. Fix $D'\Subset\set{\tau=1}$. We claim that 
\begin{equation}\label{e-gue140603II}
\int S_k(x,z)\tau(z)S_k(z,y)dv_M(z)\equiv e^{ik\Psi(x,y)}b(x,y,k)\mod O(k^{-\infty})
\end{equation}
locally uniformly on $D'\times D'$,
where 
\[
b(x,y,k)\sim\sum^\infty_{j=0}b_{j}(x,y)k^{n-j}\:\: \text{in 
$S^n(1;D'\times D',\Lambda^{0,q}(T^*M)\boxtimes (\Lambda^{0,q}(T^*M))^*)$},
\] 
with $b_{j}\in \cC^\infty(D'\times D',\Lambda^{0,q}(T^*M)\boxtimes(\Lambda^{0,q}(T^*M))^*)$, 
$j\in\N$\,. The $S_k$ constructed in~\cite{HM12} is called approximated Szeg\"o kernel. $S_k$ satisfies 
\begin{equation}\label{e-gue160710y}
S_k\circ S_k\equiv S_k\mod O(k^{-\infty})\ \ \mbox{on $D$}
\end{equation}
(see~\cite[Theorems\,3.11--12]{HM12}). Relation \eqref{e-gue160710y} says that 
\begin{equation}\label{e-gue160710yI}
\int S_k(x,z)S_k(z,y)dv_M(z)\equiv S_k(x,y)= e^{ik\Psi(x,y)}b(x,y,k)\mod O(k^{-\infty})
\end{equation}
locally uniformly on $D\times D$. Since $S_k$ is properly supported (see the discussion after \eqref{e-gue160711s} for the meaning of properly supported), the integral \eqref{e-gue160710yI} is well-defined. 
Now, 
\begin{equation}\label{e-gue160710yII}
\int S_k(x,z)\tau(z)S_k(z,y)dv_M(z)=\int S_k(x,z)S_k(z,y)dv_M(z)-\int S_k(x,z)(1-\tau(z))S_k(z,y)dv_M(z).
\end{equation}
Note that $S_k(x,y)=O(k^{-\infty})$ if $\abs{x-y}\geq c$, for some $c>0$. 
From this observation, we conclude that for $(x,y)\in D'\times D'$ (recall that $D'\Subset\set{\tau=1}$)
\begin{equation}\label{e-gue160710yIII}
\int S_k(x,z)(1-\tau(z))S_k(z,y)dv_M(z)=\int_{z\notin\ol{D'}}S_k(x,z)(1-\tau(z))S_k(z,y)dv_M(z)\equiv0\mod O(k^{-\infty})
\end{equation}
locally uniformly on $D'\times D'$. From \eqref{e-gue160710yI}, \eqref{e-gue160710yII} 
and \eqref{e-gue160710yIII}, we get \eqref{e-gue140603II}. 

We claim that 
\begin{equation}\label{e-gue140603III}
\mbox{$\Psi(x,y)-\Psi_1(x,y)$ vanishes to infinite order on ${\rm diag\,}(D'\times D')$}.
\end{equation}
Note that $\Psi(x,x)=\Psi_1(x,x)=0$. We assume that there are 
$\alpha_0, \beta_0\in\mathbb N^{2n}_0$, $\abs{\alpha_0}+\abs{\beta_0}\geq1$ 
and $(x_0,x_0)\in D'\times D'$, such that 
\begin{equation}\label{e-gue140603IV}
\begin{split}
&\pr^{\alpha_0}_x\pr^{\beta_0}_y\Bigr(\Psi_1(x,y)-\Psi(x,y)\Bigr)|_{(x_0,x_0)}\neq0,\\
&\pr^\alpha_x\pr^\beta_y\Bigr(\Psi_1(x,y)-\Psi(x,y)\Bigr)|_{(x_0,x_0)}=0,
\ \ \forall\alpha, \beta\in\mathbb N^{2n}_0, \abs{\alpha}+\abs{\beta}<\abs{\alpha_0}+\abs{\beta_0}.
\end{split}
\end{equation}
From \eqref{e-gue140603I} and \eqref{e-gue140603II}, we have 
\begin{equation}\label{e-gue140603V}
\mbox{$e^{ik(\Psi_1(x,y)-\Psi(x,y))}\Td b_1(x,y,k)-b(x,y,k)=e^{-ik\Psi(x,y)}F_k(x,y)$ on $D'\times D'$},
\end{equation}
where $F_k\equiv0\mod O(k^{-\infty})$ locally uniformly on $D'\times D'$. From \eqref{e-gue140603IV}, it is easy to see that 
\begin{equation}\label{e-gue140603VI}
\begin{split}
&\lim_{k\To\infty}k^{-n-1}\pr^{\alpha_0}_x\pr^{\beta_0}_y
\Bigl(e^{ik(\Psi_1(x,y)-\Psi(x,y))}\Td b_1(x,y,k)-b(x,y,k)\Bigr)|_{(x_0,x_0)}\\
&=i\pr^{\alpha_0}_x\pr^{\beta_0}_y\Bigr(\Psi(x,y)-\Psi_1(x,y)\Bigr)|_{(x_0,x_0)}\Td 
b_{1,0}(x_0,x_0)\neq0.
\end{split}
\end{equation}
It is obviously that
\begin{equation}\label{e-gue140603VII}
\lim_{k\To\infty}k^{-n-1}\pr^{\alpha_0}_x\pr^{\beta_0}_y\Bigr(e^{-ik\Psi(x,y)}F_k(x,y)\Bigr)|_{(x_0,x_0)}=0.
\end{equation}
From \eqref{e-gue140603V}, \eqref{e-gue140603VI} and \eqref{e-gue140603VII}, 
we get a contradiction. The claim follows. Since $\tau$ is arbitrary, $\Psi$ and $\Psi_1$ are independent of 
$\tau$, we conclude that $\Psi_1(x,y)-\Psi(x,y)$ vanishes to infinite order on $D\times D$. 
Thus we can replace $\Psi_1$ by $\Psi$ in \eqref{e-gue140603I}. The lemma follows.
\end{proof}
From Lemma~\ref{l-gue140509}, Lemma~\ref{l-gue140603}, 
Lemma~\ref{l-gue140603I} and Lemma~\ref{l-gue140506I}, we obtain the following. 
\begin{thm}\label{t-gue140725}
With the notations above, let $s$ be local trivializing holomorphic section of $L$ on $D_0\Subset M$.
Assume that $D_0\Subset M(q)$. Then, for every $N>1$, $m\in\mathbb N$, 
there exists $\Td C_{N,m}>0$ independent of $k$ such that 
\[\abs{T^{(q),f}_{k,k^{-N},s}(x,y)-e^{ik\Psi(x,y)}b_f(x,y,k)}_{\cC^m(D_0\times D_0)}
\leq\Td C_{N,m}k^{2n-\frac{N}{2}+2m},\]
where 
\begin{equation}  \label{e-gue140521Im}
\begin{split}
&b_f(x,y,k)\in S^{n}(1;D_0\times D_0,\Lambda^{0,q}(T^*M))\boxtimes(\Lambda^{0,q}(T^*M))^*), \\
&b_f(x,y,k)\sim\sum^\infty_{j=0}b_{f,j}(x,y)k^{n-j}
\text{ in }S^{n}(1;D_0\times D_0,\Lambda^{0,q}(T^*M))\boxtimes(\Lambda^{0,q}(T^*M))^*), \\
&b_{f,0}(x,x)=(2\pi)^{-n}f(x)\big|\det\dot{R}^L(x)\big|I_{\det\ov{W}^{\,*}}(x),\ \ \forall x\in D_0,
\end{split}
\end{equation}
and $\Psi$ is as in Theorem~\ref{t-gue140503I}.
\end{thm}
\begin{proof}[{Proof of Theorem \ref{t-gue140521}}]
From Theorem~\ref{t-gue140506} and Theorem~\ref{t-gue140725}, Theorem~\ref{t-gue140521} follows. 
\end{proof}
\section{Asymptotics of the composition of Toeplitz operators}\label{s:ctoe}

In this Section, we establish the expansion of the composition of two Toeplitz operators and prove Theorems
\ref{t-gue140729II}, \ref{t-comp},
\ref{t-gue140523IV},
\ref{s1-sing-semi-main} and \ref{sing-main}.

Let $f,g\in\cC^\infty(M)$ be bounded. For $\lambda\geq0$, put 
\[T^{(q),f,g}_{k,\lambda}:=T^{(q),f}_{k,\lambda}\circ T^{(q),g}_{k,\lambda}: 
L^2_{(0,q)}(M,L^k)\to\cE^q_\lambda(M, L^k)\]
and set $T^{(q),f,g}_k:=T^{(q),f,g}_{k,0}$. 
\begin{thm}\label{t-gue140729}
Let $s$, $\widehat s$ be local trivializing holomorphic sections of $L$ on $D_0\Subset M$ and $D_1\Subset M$ 
respectively, $\abs{s}^2_{h}=e^{-2\phi}$, $\abs{\widehat s}^2_{h}=e^{-2\widehat\phi}$, where 
$D_0$ and $D_1$ are open sets. Assume that $D_0\Subset M(j)$, $j\neq q$ or 
$D_0\Subset M(q)$ and  $\ol D_0\bigcap\ol D_1=\emptyset$.
Then, for every $m\in\mathbb N$, $N>1$, 
there exists $C_{N,m}>0$ independent of $k$ such that 
\begin{equation}\label{e:3n}
\abs{T^{(q),f,g}_{k,k^{-N},s,\widehat s}(x,y)}_{\cC^m(D_0\times D_1)}
\leq C_{N,m}k^{3n-\frac{N}{2}+2m}.
\end{equation}
\end{thm}
\begin{proof}
The proof is similar to the proof of Theorem~\ref{t-gue140506},
so we will insist here on the appearance of the exponent $3n$ in the power
of $k$ in  \eqref{e:3n}, compared to $2n$ in the previous estimates.
The argument holds for any complex manifold, 
\emph{not necessarily compact}. For simplicity, we only consider $q=0$. Let $s$, $\widehat s$ 
be local trivializing holomorphic sections of $L$ on $\Td D_0\Subset M$ and $\Td D_1\Subset M$ 
respectively, $\abs{s}^2_{h}=e^{-2\phi}$, $\abs{\widehat s}^2_{h}=e^{-2\widehat\phi}$, 
where 
$\Td D_0$ and $\Td D_1$ are open sets. Fix $D_0\Subset\Td D_0$, 
$D_1\Subset\Td D_1$, $D_0$ and $D_1$ are open sets. 
Let $\tau\in\cC^\infty_0(\Td D_0)$ and $\tau=1$ on $D_0$. 
We will first show that how to estimate the kernel of $T^{(0),(1-\tau)f,g}_{k,k^{-N}}(x,y)$ on $D_0\times D_1$. 
Take 
\[\set{\alpha_1(x),\alpha_2(x),\ldots,\alpha_{d_k}(x)},\ \ \set{\delta_1(x),\delta_2(x),\ldots,\delta_{d_k}(x)}\]
be orthonormal frames for $\cE^0_{k^{-N}}(M,L^k)$, where $d_k\in\mathbb N\cup\set{\infty}$.
On $\Td D_0$, we write $\alpha_j(x)=s^k(x)\Td \alpha_j(x)$, $j=1,\ldots,d_k$. On $\Td D_1$, we write 
$\delta_j(x)=\hat s^k(x)\hat\delta_j(x)$, $ j=1,\ldots,d_k$. For every $y\in\Td D_1$, put
\begin{equation}\label{e-gue160711g}
\begin{split}
T^{(0),g}_{k,k^{-N}}(x,y)(\hat s^{-k}e^{-k\widehat\phi})(y)
:=\sum^{d_k}_{j,\ell=1}\alpha_j(x)(\,g\delta_\ell\,|\,\alpha_j)_k\ol{\widehat\delta_\ell}(y)
e^{-k\widehat\phi}(y).
\end{split}
\end{equation}
Since $\sum^{d_k}_{j=1}\abs{\alpha_j(x)}^2_{h^k}$ and 
$\sum^{d_k}_{j=1}\abs{\delta_j(x)}^2_{h^k}$ converge
locally uniformly in $C^\infty$ topology, for fixed $y$, 
$T^{(0),g}_{k,k^{-N}}(\cdot,y)(\hat s^{-k}e^{-k\widehat\phi})(y)$ 
is a smooth section of $L^k$. It is easy to see that for every $(x,y)\in D_0\times D_1$, 
\begin{equation}\label{e-gue160711w}
T^{(0),(1-\tau)f,g}_{k,k^{-N},s,\hat s}(x,y)=
\sum^{d_k}_{j=1}e^{-k\phi(x)}\Td\alpha_j(x)\Bigr(((1-\tau)f)(\cdot)T^{(0),g}_{k,k^{-N}})
(\cdot,y)(\hat s^{-k}e^{-k\hat\phi})(y)\,|\,\alpha_j(\cdot)\,\Bigr)_k.
\end{equation}
When $M$ is compact, $d_k$ is finite and $d_k\approx k^n$ and it is easy 
to estimate \eqref{e-gue160711w}. When $M$ is \emph{non-compact}, 
$d_k$ could be infinite, so to estimate \eqref{e-gue160711w} 
we need a more detailed analysis. Now, we fix $p\in D_0$. 
From Theorem~\ref{t-gue140503I} and Lemma~\ref{l-gue140504II}, 
we can find $v_k\in\cE^0_{k^{-N}}(M,L^k)$ with $\norm{v_k}_k=1$, 
\begin{equation}\label{e-gue160711wI}
\int_{M\setminus D_0}\abs{v_k}^2_{h^k}dv_M=O(k^{-\infty})
\end{equation}
and 
\begin{equation}\label{e-gue160711wII}
\abs{P^{(0)}_{k,k^{-N},s}(p,p)-\abs{v_k(p)}^2_{h^k}}\lesssim k^{3n-N}.
\end{equation}
We take $\alpha_1=v_k$ and obtain from \eqref{e-gue160711wII} that
\begin{equation}\label{e-gue160711wIII}
\sum^{d_k}_{j=2}e^{-2k\phi(p)}\abs{\Td\alpha_j(p)}^2\lesssim k^{3n-N}. 
\end{equation}
Now, 
\begin{equation}\label{e-gue160711wV}
\begin{split}
&\abs{e^{-k\phi(p)}\Td\alpha_1(p)\Bigr(((1-\tau)f)(\cdot)T^{(0),g}_{k,k^{-N}})
(\cdot,y)(\hat s^{-k}e^{-k\hat\phi})(y)\,|\,\alpha_1(\cdot)\,\Bigr)_k}\\
&\leq\abs{e^{-k\phi(x)}\Td\alpha_1(p)}\norm{T^{(0),g}_{k,k^{-N}}(\cdot,y)
(\hat s^{-k}e^{-k\widehat\phi})(y)}_{k}\norm{(1-\tau)f\alpha_1}_k.
\end{split}
\end{equation}
We claim that 
\begin{equation}\label{e-gue160711gI}
\norm{T^{(0),g}_{k,k^{-N}}(\cdot,y)(\hat s^{-k}e^{-k\widehat\phi})(y)}^2_{k}
\lesssim k^n\ \ \mbox{locally uniformly on $y\in D_1$}. 
\end{equation}
Fix $y_0\in D_1$. We take $\set{\delta_1(x),\delta_2(x),\ldots,\delta_{d_k}(x)}$ 
so that $\hat\delta_1(y_0)\neq0$, $\hat\delta_j(y_0)=0$, $j=2,3,\ldots,d_k$. Then, 
\begin{equation}\label{e-gue160711gII}
T^{(0),g}_{k,k^{-N}}(x,y_0)(\hat s^{-k}e^{-k\widehat\phi})(y_0)
=\sum^{d_k}_{j=1}\alpha_j(x)(\,g\delta_1\,|\,\alpha_j)_k\ol{\widehat\delta_1}(y_0)e^{-k\hat\phi(y_0)}.
\end{equation}
From \eqref{e-gue160711gII}, we can check that 
\begin{equation}\label{e-gue160711gIII}
\begin{split}
\norm{T^{(0),g}_{k,k^{-N}}(\cdot,y_0)}^2_k&=
\sum^{d_k}_{j=1}\norm{\alpha_j}^2_k
\abs{(\,g\delta_1\,|\,\alpha_j)_k}^2\abs{\ol{\widehat\delta_1}(y_0)}^2e^{-2k\widehat\phi(y_0)}\\
&=\norm{P^{(0)}_{k,k^{-N}}(g\delta_1)}^2_ke^{-2k\hat\phi(y_0)}\abs{\ol{\delta_1}(y_0)}^2.
\end{split}
\end{equation}
Since $g$ is a bounded function, $\|P^{(0)}_{k,k^{-N}}(g\delta_1)\|^2_k\leq C$, 
for some constant $C>0$ independent of $k$. Moreover, 
$e^{-2k\hat\phi(y_0)}\abs{\ol{\delta_1}(y_0)}^2\lesssim k^n$ 
locally uniformly on $D_1$. From this observation and \eqref{e-gue160711gIII}, 
the estimate \eqref{e-gue160711gI} follows. 
Relation \eqref{e-gue160711wI} yields 
\begin{equation}\label{e-gue160711z}
\norm{(1-\tau)f\alpha_1}_k=O(k^{-\infty}). 
\end{equation}
From \eqref{e-gue160711wV}, \eqref{e-gue160711gI} , \eqref{e-gue160711z}  and since
$\abs{e^{-k\phi(p)}\Td\alpha_1(p)}\lesssim k^{\frac{n}{2}}$, we conclude that 
\begin{equation}\label{e-gue160711zI}
\abs{e^{-k\phi(p)}\Td\alpha_1(p)\Bigr(((1-\tau)f)(\cdot)T^{(0),g}_{k,k^{-N}})(\cdot,y)(\hat s^{-k}e^{-k\hat\phi})(y)\,|\,\alpha_1(\cdot)\,\Bigr)_k}=O(k^{-\infty}).
\end{equation}
From \eqref{e-gue160711wIII} and \eqref{e-gue160711gI}, we have
\begin{equation}\label{e-gue160713p}
\begin{split}
&\abs{\sum^{d_k}_{j=2}e^{-k\phi(p)}\Td\alpha_j(p)\Bigr(((1-\tau)f)(\cdot)
T^{(0),g}_{k,k^{-N}})(\cdot,y)(\hat s^{-k}e^{-k\hat\phi})(y)\,|\,\alpha_j(\cdot)\,\Bigr)_k}\\
&\leq\sqrt{\sum^{d_k}_{j=2}e^{-2k\phi(p)}\abs{\Td\alpha_j(p)}^2}
\sqrt{\sum^{d_k}_{j=2}\abs{\Bigr(((1-\tau)f)(\cdot)T^{(0),g}_{k,k^{-N}})
(\cdot,y)(\hat s^{-k}e^{-k\hat\phi})(y)\,|\,\alpha_j(\cdot)\,\Bigr)_k}^2}\\
&\leq\sqrt{\sum^{d_k}_{j=2}e^{-2k\phi(p)}\abs{\Td\alpha_j(p)}^2}
\sqrt{\sum^{d_k}_{j=1}\abs{\Bigr(((1-\tau)f)(\cdot)T^{(0),g}_{k,k^{-N}})
(\cdot,y)(\hat s^{-k}e^{-k\hat\phi})(y)\,|\,\alpha_j(\cdot)\,\Bigr)_k}^2}\\
&=\sqrt{\sum^{d_k}_{j=2}e^{-2k\phi(p)}\abs{\Td\alpha_j(p)}^2}
\norm{P^{(0)}_{k,k^{-N}}(((1-\tau)f)(\cdot)T^{(0),g}_{k,k^{-N}})
(\cdot,y)(\hat s^{-k}e^{-k\hat\phi})(y))}_k\\
&\leq\sqrt{\sum^{d_k}_{j=2}e^{-2k\phi(p)}\abs{\Td\alpha_j(p)}^2}
\norm{((1-\tau)f)(\cdot)T^{(0),g}_{k,k^{-N}})(\cdot,y)(\hat s^{-k}e^{-k\hat\phi})(y)}_k\\
&\lesssim k^{2n-N}.
\end{split}
\end{equation}
Note that here we still get the exponent $2n-N$. 
From \eqref{e-gue160711w}, \eqref{e-gue160711zI} and \eqref{e-gue160713p}, we get 
\begin{equation}\label{e-gue160711zIII}
\abs{T^{(0),(1-\tau)f,g}_{k,k^{-N},s,\hat s}(x,y)}\lesssim k^{2n-N}\ \ \mbox{locally uniformly on $D_0\times D_1$}. 
\end{equation}
Thus, to estimate $T^{(0),f,g}_{k,k^{-N},s,\hat s}(x,y)$, we only need to estimate $T^{(0),\tau f,g}_{k,k^{-N},s,\hat s}(x,y)$. Let $\hat\tau\in\cC^\infty_0(\Td D_1)$ and $\hat\tau=1$ on $D_1$. We can repeat the procedure above and conclude that 
\begin{equation}\label{e-gue160711zIV}
\abs{T^{(0),\tau f,(1-\hat\tau)g}_{k,k^{-N},s,\hat s}(x,y)}\lesssim k^{2n-N}\ \ \mbox{locally uniformly on $D_0\times D_1$}. 
\end{equation}
Thus, to estimate $T^{(0),f,g}_{k,k^{-N},s,\hat s}(x,y)$, we only need to estimate $T^{(0),\tau f,\hat\tau g}_{k,k^{-N},s,\hat s}(x,y)$. We now explain how to estimate $T^{(0),\tau f,\hat\tau g}_{k,k^{-N},s,\hat s}(x,y)$. Take $\tau_1(x)\in\cC^\infty_0(\Td D_0)$, $\tau_1=1$ on ${\rm Supp\,}\tau$. We have 
\begin{equation}\label{e-gue160711zV}
\begin{split}
&T^{(0),\tau f,\hat\tau g}_{k,k^{-N}}=\Td T^{(0),\tau f,\hat\tau g}_{k,k^{-N}}+\hat T^{(0),\tau f,\hat\tau g}_{k,k^{-N}},\\
&\Td T^{(0),\tau f,\hat\tau g}_{k,k^{-N}}=P^{(0)}_{k,k^{-N}}\tau fP^{(0)}_{k,k^{-N}}\tau_1P^{(0)}_{k,k^{-N}}\hat\tau gP^{(0)}_{k,k^{-N}}=T^{(0),\tau f}_{k,k^{-N}}\tau_1 T^{(0),\hat\tau g}_{k,k^{-N}},\\
&\hat T^{(0),\tau f,\hat\tau g}_{k,k^{-N}}=P^{(0)}_{k,k^{-N}}\tau fP^{(0)}_{k,k^{-N}}(1-\tau_1)P^{(0)}_{k,k^{-N}}\hat\tau gP^{(0)}_{k,k^{-N}}=T^{(0),\tau f}_{k,k^{-N}}(1-\tau_1)T^{(0),\hat\tau g}_{k,k^{-N}}.
\end{split}
\end{equation}
The estimate of $\Td T^{(0),\tau f,\hat\tau g}_{k,k^{-N},}$ is as compact case since 
\[\Td T^{(0),\tau f,\hat\tau g}_{k,k^{-N},s,\hat s}(x,y)=\int T^{(0),\tau f}_{k,k^{-N},s,s}(x,z)\tau_1(z) T^{(0),\hat\tau g}_{k,k^{-N},s,\hat s}(z,y)dv_M(z)\]
and the integral is over some compact set of $M$. We only need to show how to estimate $\hat T^{(0),\tau f,\hat\tau g}_{k,k^{-N},s,\hat s}(x,y)$. Note that
\begin{equation}\label{e-gue160713pI}
\hat T^{(0),\tau f,\hat\tau g}_{k,k^{-N}}=P^{(0)}_{k,k^{-N}}\tau fT^{(0),1-\tau_1}_{k,k^{-N}}\hat\tau gP^{(0)}_{k,k^{-N}}.
\end{equation}
From \eqref{e-gue160713pI}, it is easy to see that 
\begin{equation}\label{e-gue160711zVI}
\begin{split}
&\hat T^{(0),\tau f,\hat\tau g}_{k,k^{-N},s,\hat s}(x,y)=\sum^{d_k}_{j,\ell=1}\Td\alpha_j(x)e^{-k\phi(x)}\ol{\hat\delta_\ell(y)}e^{-k\hat\phi(y)}\\
&\times\int T^{(0),1-\tau_1}_{k,k^{-N},s,\hat s}(z,u)\hat\tau(u)g(u)\hat\delta_\ell(u)\tau(z)f(z)
\ol{\Td\alpha_j(z)}e^{-k\hat\phi(u)-k\phi(z)}dv_M(u)dv_M(z).
\end{split}
\end{equation}
Now, fix $x_0\in D_0$ and $y_0\in D_1$. We take $\set{\alpha_1(x),\alpha_2(x),\ldots,\alpha_{d_k}(x)}$ and $\set{\delta_1(x),\delta_2(x),\ldots,\delta_{d_k}(x)}$ so that 
$\Td\alpha_1(x_0)\neq0$, $\Td\alpha_j(x_0)=0$, $j=2,3,\ldots,d_k$, $\hat\delta_1(y_0)\neq0$, $\hat\delta_j(y_0)=0$, $j=2,3,\ldots,d_k$. Thus, 
\begin{equation}\label{e-gue160711zVII}
\begin{split}
&\hat T^{(0),\tau f,\hat\tau g}_{k,k^{-N},s,\hat s}(x_0,y_0)=\Td\alpha_1(x_0)e^{-k\phi(x_0)}\ol{\hat\delta_1(y_0)}e^{-k\hat\phi(y_0)}\\
&\times\int T^{(0),1-\tau_1}_{k,k^{-N},s,\hat s}(z,u)\hat\tau(u)g(u)\hat\delta_1(u)\tau(z)f(z)
\ol{\Td\alpha_1(z)}e^{-k\hat\phi(u)-k\phi(z)}dv_M(u)dv_M(z).
\end{split}
\end{equation}
From Lemma~\ref{l-gue140506I}, we see that 
$|T^{(0),1-\tau_1}_{k,k^{-N},s,\hat s}(z,u)|\lesssim k^{2n-N}$ 
locally uniformly on $\Td D_0\times\Td D_1$. From this observation and since 
$|\Td\alpha_1(x_0)e^{-k\phi(x_0)}\ol{\Td\delta_1(y_0)}e^{-k\hat\phi(y_0)}|\lesssim k^n$, 
we deduce that 
\[\abs{\hat T^{(0),\tau f,\hat\tau g}_{k,k^{-N},s,\hat s}(x_0,y_0)}\lesssim k^{3n-N}.\] 
Here we get the power $3n$. 
\end{proof}

We have moreover: 
\begin{thm}\label{t-gue140729I}
With the notations above, let $s$ be local trivializing holomorphic section of $L$ on $D_0\Subset M$.
Assume that $D_0\Subset M(q)$. Then, for every $N>1$, $m\in\mathbb N$, 
there exists $\Td C_{N,m}>0$ independent of $k$ such that 
\[\abs{T^{(q),f,g}_{k,k^{-N},s}(x,y)-
e^{ik\Psi(x,y)}b_{f,g}(x,y,k)}_{\cC^m(D_0\times D_0)}\leq\Td C_{N,m}k^{3n-\frac{N}{2}+2m},\]
where 
\begin{equation}  \label{e-gue140729III}
\begin{split}
&b_{f,g}(x,y,k)\in S^{n}(1;D_0\times D_0,\Lambda^{0,q}(T^*M))\boxtimes(\Lambda^{0,q}(T^*M))^*), \\
&b_{f,g}(x,y,k)\sim\sum^\infty_{j=0}b_{f,g,j}(x,y)k^{n-j}
\text{ in }S^{n}(1;D_0\times D_0,\Lambda^{0,q}(T^*M))\boxtimes(\Lambda^{0,q}(T^*M))^*), \\
&b_{f,g,0}(x,x)=(2\pi)^{-n}f(x)g(x)\big|\det\dot{R}^L(x)\big|I_{\det\ov{W}^{\,*}}(x),\ \ \forall x\in D_0,
\end{split}
\end{equation}
and $\Psi$ is as in Theorem~\ref{t-gue140503I}.
\end{thm}

\begin{proof}
The proof of this theorem is similar to the proof of Theorem~\ref{t-gue140725}. 
We only give the outline of the proof and for simplicity we consider only $q=0$. 
Fix $D_0\subset\Td D_0\Subset M(q)$ and take $\tau(x)\in\cC^\infty_0(\Td D_0)$, 
$\tau=1$ on $D_0$. We may assume that the section $s$ defined on $\Td D_0$. 
We can repeat the proof of Lemma~\ref{l-gue140506I} with minor changes 
and conclude that for every $N>1$ and $m\in\mathbb N$, 
there is $C_{N,m}>0$ independent of $k$ such that 
\begin{equation}\label{e-gue140729IV}
\begin{split}
&\abs{T^{(0),(1-\tau)f,g}_{k,k^{-N},s}(x,y)}_{\cC^m(D_0\times D_0)}\leq C_{N,m}k^{3n-\frac{N}{2}+2m},\\
&\abs{T^{(0),\tau f,(1-\tau)g}_{k,k^{-N},s}(x,y)}_{\cC^m(D_0\times D_0)}\leq C_{N,m}k^{3n-\frac{N}{2}+2m}.
\end{split}
\end{equation}
From \eqref{e-gue140729IV}, we only need to consider $T^{(0),\tau f,\tau g}_{k,k^{-N},s}$. Take $\tau_1(x)\in\cC^\infty_0(\Td D_0)$, $\tau_1=1$ on ${\rm Supp\,}\tau$. We have 
\begin{equation}\label{e-gue140729V}
\begin{split}
&T^{(0),\tau f,\tau g}_{k,k^{-N}}=\Td T^{(0),\tau f,\tau g}_{k,k^{-N}}+\widehat T^{(0),\tau f,\tau g}_{k,k^{-N}},\\
&\Td T^{(0),\tau f,\tau g}_{k,k^{-N}}=P^{(0)}_{k,k^{-N}}\tau fP^{(0)}_{k,k^{-N}}\tau_1P^{(0)}_{k,k^{-N}}\tau gP^{(0)}_{k,k^{-N}}=T^{(0),\tau f}_{k,k^{-N}}\tau_1 T^{(0),\tau g}_{k,k^{-N}},\\
&\widehat T^{(0),\tau f,\tau g}_{k,k^{-N}}=P^{(0)}_{k,k^{-N}}\tau fP^{(0)}_{k,k^{-N}}(1-\tau_1)P^{(0)}_{k,k^{-N}}\tau gP^{(0)}_{k,k^{-N}}=T^{(0),\tau f}_{k,k^{-N}}(1-\tau_1)T^{(0),\tau g}_{k,k^{-N}}.
\end{split}
\end{equation}
Let $\Td T^{(0),\tau f,\tau g}_{k,k^{-N,s}}(x,y), \widehat T^{(0),\tau f,\tau g}_{k,k^{-N},s}(x,y)\in\cC^\infty(\Td D_0\times\Td D_0,\Lambda^{0,q}(T^*M))\boxtimes(\Lambda^{0,q}(T^*M))^*)$ be the distribution kernels of $s^{-k}e^{-k\phi}\Td T^{(0),\tau f,\tau g}_{k,k^{-N}}s^ke^{k\phi}$ and $s^{-k}e^{-k\phi}\widehat T^{(0),\tau f,\tau g}_{k,k^{-N}}s^ke^{k\phi}$ respectively. We have 
\begin{equation}\label{e-gue140729VI}
T^{(0),\tau f,\tau g}_{k,k^{-N},s}(x,y)=\Td T^{(0),\tau f,\tau g}_{k,k^{-N},s}(x,y)+\widehat T^{(0),\tau f,\tau g}_{k,k^{-N},s}(x,y).
\end{equation}
We first consider $\widehat T^{(0),\tau f,\tau g}_{k,k^{-N},s}(x,y)$. Take 
\[\set{\alpha_1(x),\alpha_2(x),\ldots,\alpha_{d_k}(x)},\ \ \set{\delta_1(x),\delta_2(x),\ldots,\delta_{d_k}(x)}\] 
be orthonormal frames for $\cE^0_{k^{-N}}(M,L^k)$, where $d_k\in\mathbb N\cup\set{\infty}$. On $\Td D_0$, we write 
\[
\alpha_j(x)=s^k(x)\Td \alpha_j(x),\:\:
\delta_j(x)=s^k(x)\Td\delta_j(x),\ \ j=1,\ldots,d_k.
\]
It is straightforward to check that 
\begin{equation}\label{e-gue140729VII}
\begin{split}
&\widehat T^{(0),\tau f,\tau g}_{k,k^{-N},s}(x,y)=\sum^{d_k}_{j,s=1}\Td\alpha_j(x)e^{-k\phi(x)}\ol{\Td\delta_s(y)}e^{-k\phi(y)}\\
&\times\int T^{(0),1-\tau_1}_{k,k^{-N},s}(z,u)\tau(u)g(u)\Td\delta_s(u)\tau(z)f(z)
\ol{\Td\alpha_j(z)}e^{-k\phi(u)-k\phi(z)}dv_M(u)dv_M(z).
\end{split}
\end{equation}
From Lemma~\ref{l-gue140506I}, \eqref{e-gue140504III} and \eqref{e-gue140729VII}, 
it is not difficult to see that for every $N>1$ and $m\in\mathbb N$, there exists $C_{N,m}>0$ independent of $k$ such that 
\begin{equation}\label{e-gue140729VIII}
\abs{\widehat T^{(0),\tau f,\tau g}_{k,k^{-N},s}(x,y)}_{\cC^m(D_0\times D_0)}\leq C_{N,m}k^{3n-\frac{N}{2}+2m}.
\end{equation}
We now consider $\Td T^{(0),\tau f,\tau g}_{k,k^{-N},s}(x,y)$. We have
\begin{equation}\label{e-gue140729a}
\Td T^{(0),\tau f,\tau g}_{k,k^{-N},s}(x,y)=
\int T^{(0),\tau f}_{k,k^{-N},s}(x,z)\tau_1(z)T^{(0),\tau g}_{k,k^{-N},s}(z,y)dv_M(z).
\end{equation}
Put 
\[S_{k,f}(x,y)=e^{ik\Psi(x,y)}b_f(x,y,k),\ \ S_{k,g}(x,y)=e^{ik\Psi(x,y)}b_g(x,y,k),\] 
where $\Psi(x,y)$ is as in Theorem~\ref{t-gue140503I} and 
$b_f(x,y,k), b_g(x,y,k)\in S^{n}(1;\Td D_0\times\Td D_0)$ are as in Theorem~\ref{t-gue140725}. Put 
\[
A_k(x,y)
=\int T^{(0),\tau f}_{k,k^{-N},s}(x,z)\tau_1(z)T^{(0),\tau g}_{k,k^{-N},s}(z,y)dv_M(z)-
\int S_{k,f}(x,z)\tau_1(z)S_{k,g}(z,y)dv_M(z).
\]
From Theorem~\ref{t-gue140725}, it is straightforward to see that
for every $N>1$ and $m\in\mathbb N$, there exists $C_{N,m}>0$ independent of $k$ such that 
\begin{equation}\label{e-gue140729aI}
\abs{A_k(x,y)}_{\cC^m(D_0\times D_0)}\leq C_{N,m}k^{3n-\frac{N}{2}+2m}.
\end{equation}
We claim that
\begin{equation}\label{e-gue140729aII}
\int S_{k,f}(x,z)\tau_1(z)S_{k,g}(z,y)dv_M(z)\equiv e^{ik\Psi(x,y)}b_{f,g}(x,y,k)\mod O(k^{-\infty})
\end{equation}
locally uniformly on $\Td D_0\times\Td D_0$, where 
$b_{f,g}(x,y,k)\in S^{n}(1;\Td D_0\times\Td D_0)$,
\[b_{f,g}(x,y,k)\sim\sum^\infty_{j=0}b_{f,g,j}(x,y)k^{n-j}\text{ in }S^{n}(1;\Td D_0\times\Td D_0),\]
with $b_{f,g,j}\in\cC^\infty(\Td D_0\times\Td D_0)$, and 
$b_{f,g,0}(x,x)=(2\pi)^{-n}\tau(x)^2\tau_1(x)f(x)g(x)\big|\det\dot{R}^L(x)\big|$, $x\in\Td D_0$.

We use now the theory of complex Fourier integral operator, 
in particular the fact that composition of complex Fourier integral operators 
is still a complex Fourier integral operator. 
Indeed, the complex stationary phase formula of Melin-Sj\"ostrand~\cite{MS74} 
tells us that there is a complex phase $\Psi_1(x,y)\in\cC^\infty(\Td D_0\times\Td D_0)$ 
with ${\rm Im\,}\Psi_1(x,y)\approx\abs{x-y}^2$, 
such that for any $A(x,y)=e^{ik\Psi(x,y)}a(x,y,k)$, $C(x,y)=e^{ik\Psi(x,y)}c(x,y,k)$, 
where $a(x,y,k), c(x,y,k)\in S^{n}(1;\Td D_0\times\Td D_0)$, 
and every $\chi\in\cC^\infty_0(\Td D_0)$, we have 
\begin{equation}\label{e-gue160711}
\int A(x,z)\chi(z)B(z,y)dv_M(z)\equiv e^{ik\Psi_1(x,y)}h(x,y,k)\mod O(k^{-\infty})
\end{equation}
locally uniformly on $\Td D_0\times\Td D_0$, where 
$h(x,y,k)\in S^{n}(1;\Td D_0\times\Td D_0)$,
\[h(x,y,k)\sim\sum^\infty_{j=0}h_j(x,y)k^{n-j}\text{ in }S^{n}(1;\Td D_0\times\Td D_0)\] 
and $h_0(x,x)=(2\pi)^{-n}\chi(x)a_0(x,x)c_0(x,x)$, $x\in\Td D_0$, where $a_0$ and $c_0$ denote the leading terms of $a(x,y,k)$ and $c(x,y,k)$ respectively. In the proof of Lemma~\ref{l-gue140603I}, we proved that $\Psi(x,y)-\Psi_1(x,y)$ vanishes to infinite order on $x=y$
(see \eqref{e-gue140603III}). Thus, we can replace $\Psi_1$ in \eqref{e-gue160711} by $\Psi$ and we get \eqref{e-gue140729aII}.

From \eqref{e-gue140729aII}, \eqref{e-gue140729aI}, \eqref{e-gue140729VIII}, 
\eqref{e-gue140729VI} and \eqref{e-gue140729IV}, the theorem follows.
\end{proof}
\begin{proof}[Proof of Theorem~\ref{t-gue140729II}] 
Theorems~\ref{t-gue140729} and \ref{t-gue140729I} yield immediately Theorem~\ref{t-gue140729II}. 
\end{proof}
\begin{proof}[Proof of Theorem~\ref{t-gue140523IV}] 
This follows by using the asymptotics of the
Bergman kernel proved in \cite[Theorem\,1.6]{HM12} in the case of an $O(k^{-N})$ small spectral gap
and adapting the proofs of Theorems~\ref{t-gue140521} and \ref{t-gue140729II} to the current situation. 
\end{proof}
\begin{proof}[Proof of Theorem~\ref{s1-sing-semi-main}] 
By \cite[Theorem 8.2]{HM12} we know that $\Box^{(0)}_k$ has an $O(k^{-N})$ 
small spectral gap on every $D\Subset M'\cap M(0)$. 
This observation and Theorem~\ref{t-gue140523IV} yield 
Theorem~\ref{s1-sing-semi-main}. 
\end{proof}
\begin{proof}[Proof of Theorem~\ref{sing-main}]
$M\setminus\Sigma$ is a non-compact complex manifold. Let $\Box^{(0)}_k$ be 
the Gaffney extension of Kodaira Laplacian on $M\setminus\Sigma$ and let 
$P^{(0)}_{k,M\setminus\Sigma}$ be the associated Bergman projection. 
By a result of Skoda (see~\cite[Lemma 7.2]{HM12}), we know that 
\begin{equation}\label{e-gue140729}
P^{(0)}_{k,\cali{I}}=P^{(0)}_{k,M\setminus\Sigma}\ \ \mbox{on $M\setminus\Sigma$}.
\end{equation}
Moreover, we know that $\Box^{(0)}_k$ has $O(k^{-N})$ small spectral gap on every 
$D\Subset M\setminus\Sigma$ (see~\cite[Theorem 9.1]{HM12}). 
This observation, \eqref{e-gue140729} and Theorem~\ref{t-gue140523IV} imply
Theorem~\ref{sing-main}.
\end{proof}

In the following, we will prove Theorem~\ref{t-comp}. 
Fix $N>1$. Let $f, g\in\cC^\infty_0(D)$, $D\Subset M(0)$. For simplicity, we may assume that $L|_D$ 
is trivial and let $s$ be a local trivializing holomorphic section of $L$ on $D$, $\abs{s}^2_{h}=e^{-2\phi}$. 
Take $\tau\in C^\infty_0(D)$ with $\tau=1$ on ${\rm Supp\,}f\cup{\rm Supp\,}g$. Put 
\begin{equation}\label{e-gue140922}
R_k=T^{(0),f}_{k,k^{-N}}T^{(0),g}_{k,k^{-N}}-\tau T^{(0),f}_{k,k^{-N}}T^{(0),g}_{k,k^{-N}}\tau.
\end{equation}
We can repeat the proof of Lemma~\ref{l-gue140506I} with minor changes and obtain: 
\begin{lem}\label{l-gue140922}
Let $s_1$, $s_2$ be local trivializing holomorphic sections of $L$ on $D_1\Subset M$ and $D_2\Subset M$ respectively, 
where $D_1$ and $D_2$ are open sets. Then, for every $m\in\mathbb N$, there exists 
$C_{m}>0$ independent of $k$ such that 
\[\abs{R_{k,s_1,s_2}(x,y)}_{\cC^m(D_1\times D_2)}\leq C_{m}k^{3n-\frac{N}{2}+2m},\]
where $R_{k,s_1,s_2}(x,y)$ denotes the distribution kernel of 
$R_{k,s_1,s_2}:=s^{-k}_1e^{-k\phi_1}R_ks^{k}_2e^{k\phi_2}$.

In particular, $T^{(0),f}_{k,k^{-N}}T^{(0),g}_{k,k^{-N}}-
\tau T^{(0),f}_{k,k^{-N}}T^{(0),g}_{k,k^{-N}}\tau=
\mO(k^{3n-\frac{N}{2}})$ locally in the $L^2$ operator norm. 
\end{lem} 

Let $b_{f,g}(x,y,k)\in S^n(1;D\times D)$ be as in Theorem~\ref{t-gue140729II}. 
Then
\[
b_{f,g}(x,y,k)\sim\sum\limits^\infty_{j=0}b_{f,g,j}(x,y)k^{n-j}\:\: \text{in $S^n(1;D\times D)$.}
\]
Since $f, g\in\cC^\infty_0(D)$, we can take 
$b_{f,g}(x,y,k), b_{f,g,j}(x,y)\in\cC^\infty_0(D\times D)$, $j\in\N$. 
Note that $b_{f,g}(x,y,k)$ and $b_{f,g,j}(x,y)$ have uniquely determined Taylor expansion at $x=y$. Consider 
\[\begin{split}
B_k:L^2(M,L^k)&\To L^2(M,L^k)\\
u&\mapsto s^ke^{k\phi}\tau\int e^{ik\Psi(x,y)}b_{f,g}(x,y,k)s^{-k}e^{-k\phi(y)}\tau(y)u(y)dv_M(y).
\end{split}\]
In view of Theorem~\ref{t-gue140729II} and Lemma~\ref{l-gue140922}, we see that 
\begin{equation}\label{e-gue140922I}
\mbox{$B_k-T^{(0),f}_{k,k^{-N}}T^{(0),g}_{k,k^{-N}}=\mO(k^{3n-\frac{N}{2}})$ locally in the $L^2$ operator norm}.
\end{equation}
\begin{lem}\label{l-gue140922I}
For any $p\in\N$ there exist $C_p(f,g)\in\cC^\infty_0(D)$ such that 
\[\mbox{$b_{f,g}(x,y,k)\sim\sum\limits^\infty_{p=0}b_{C_p(f,g)}(x,y,k)k^{-p}$ in $S^n(1;D\times D)$},\]
where $b_{C_p(f,g)}(x,y,k)\in S^n(1;D\times D)$ for each $p\in\N$. 
\end{lem}
\begin{proof}
Set 
\begin{equation}\label{e_c0}
C_0(f,g)=fg\in\cC^\infty_0(D).
\end{equation}
From \eqref{e-gue140521Iab} and \eqref{e-gue140521I}, we see that 
\begin{equation}\label{e-gue140922II}
b_{f,g,0}(x,x)=b_{C_0(f,g),0}(x,x),\ \ \forall x\in D.
\end{equation}
Note that $b_{f,g,0}(x,y)$ and $b_{C_0(f,g),0}(x,y)$ are holomorphic with respect to $x$ and 
\[b_{f,g,0}(x,y)=\ol b_{f,g,0}(y,x),\:\: b_{C_0(f,g),0}(x,y)=\ol b_{C_0(f,g),0}(y,x).\] 
From this observation and \eqref{e-gue140922II}, it is easy to see that $b_{f,g,0}(x,y)-b_{C_0(f,g),0}(x,y)$ 
vanishes to infinite order on $x=y$. Thus, we can take $b_{C_0(f,g),0}(x,y)$ so that 
$b_{C_0(f,g),0}(x,y)=b_{f,g,0}(x,y)$ and hence $b_{f,g}(x,y,k)-b_{C_0(f,g)}(x,y,k)\in S^{n-1}(1,D\times D)$. 
Consider the expansion
\begin{equation}\label{e-gue160717I}
b_{f,g}(x,y,k)-b_{C_0(f,g)}(x,y,k)\sim\sum\limits^\infty_{j=0}a_j(x,y)k^{n-1-j}\:\:\text{in $S^{n-1}(1;D\times D)$}, 
\end{equation}
where $a_j(x,y)\in\cC^\infty_0(D)$, $j\in\N$. Set 
\begin{equation}\label{e-gue160717II}
C_1(f,g)(x)=(2\pi)^na_0(x,x)\big|\det\dot{R}^L(x)\big|^{-1}\in\cC^\infty_0(D).
\end{equation}
From \eqref{e-gue140521I}, we have $b_{C_1(f,g),0}(x,x)=a_0(x,x)$ and as in the discussion above, we can take $b_{C_1(f,g),0}(x,y)$ so that $b_{C_1(f,g),0}(x,y)=a_0(x,y)$ and hence 
\[b_{f,g}(x,y,k)-b_{C_0(f,g)}(x,y,k)-\frac{1}{k}b_{C_1(f,g)}(x,y,k)\in S^{n-2}(1,D\times D).\]
Continuing inductively, the lemma follows.
\end{proof}
\begin{proof}[Proof of Theorem~\ref{t-comp}]
Let $a(x,y,k)\in S^{n-j_0}(1,D\times D)$, $j_0\in\mathbb N$. Consider the operator 
\[\begin{split}
A_k:L^2(M,L^k)&\To L^2(M,L^k)\\
u&\mapsto s^ke^{k\phi}\tau\int e^{ik\Psi(x,y)}a(x,y,k)s^{-k}e^{-k\phi(y)}\tau(y)u(y)dv_M(y).
\end{split}\]
By \cite[Theorem 3.11]{HM12} we have
\begin{equation}\label{e-gue140922III}
\mbox{$A_k=\mO(k^{-j_0})$ locally in the $L^2$ operator norm}.
\end{equation}
For every $p\in\N$ put 
\[\begin{split}
B_{k,p}:L^2(M,L^k)&\To L^2(M,L^k)\\
u&\mapsto s^ke^{k\phi}\tau\int e^{ik\Psi(x,y)}b_{C_p(f,g)}(x,y,k)s^{-k}e^{-k\phi(y)}\tau(y)u(y)dv_M(y).
\end{split}\]
As in \eqref{e-gue140922I}, we can check that for $p=0,1,2,\ldots$\,,
\begin{equation}\label{e-gue140922IV}
\mbox{$B_{k,p}-T^{(0),C_p(f,g)}_{k,k^{-N}}=\mO(k^{3n-\frac{N}{2}})$ locally in the $L^2$ operator norm}. 
\end{equation}
Moreover, from \eqref{e-gue140922III} and Lemma~\ref{l-gue140922I}, we have
\begin{equation}\label{e-gue140922V}
\mbox{$B_k-\sum\limits^\ell_{p=0}B_{k,p}k^{-p}=\mO(k^{-\ell-1})$ locally in the $L^2$ operator norm},\ \ \ell=0,1,2,\ldots.
\end{equation}
From \eqref{e-gue140922I}, \eqref{e-gue140922IV} and \eqref{e-gue140922V}, we conclude that 
\[T^{(0),f}_{k,k^{-N}}T^{(0),g}_{k,k^{-N}}-\sum\limits^\ell_{p=0}T^{(0),C_p(f,g)}_{k,k^{-N}}k^{-p}=\mO(k^{-\ell-1}+k^{3n-\frac{N}{2}}) ,\ \ \ell=0,1,2,\ldots,\]
locally in the $L^2$ operator norm. 
Moreover, we have $C_0(f,g)=C_0(g,f)=fg$ by \eqref{e_c0}.
We also have $C_1(f,g)=-\frac{1}{2\pi}\langle\,\pr f\,|\,\pr\ol g\,\rangle_\omega$
by \eqref{e:c1}, so as in \cite[(0.23)]{MM12} we obtain
\begin{equation}\label{e-gue140922V1}
C_1(f,g)-C_1(g,f)=\sqrt{-1}\{f,g\}, 
\end{equation}
where $\{f,g\}$ is the Poisson bracket of the functions $f,g$ with respect to the symplectic form
$2\pi\omega$ on $M(0)$ (see also \cite[(4.89)]{MM08b}, \cite[(7.4.3)]{MM07}).
Therefore \eqref{e-gue160717}
follows. 
\end{proof}
Recall that the Poisson bracket 
$\{ \,\cdot\, , \,\cdot\, \}$
on $(M,2\pi \omega)$ is defined as follows.  For $f, g\in \cC^\infty (M)$,
let $\xi_{f}$ be the Hamiltonian vector field generated by $f$, 
which is defined by $2 \pi \omega(\xi_{f},\cdot)=df$. Then 
\begin{align}\label{toe4.1}
\{f, g\}:= \xi_{f}(dg).
\end{align}
\begin{rem}\label{rem_sp}
Berezin introduced in his ground-breaking work \cite{Berez:74} 
a star-product by using Toeplitz operators. 
Formal star-products are known to exist on symplectic manifolds by
\cite{DeLe83, Fedo:96}. 
The Berezin-Toeplitz star-product gives 
a concrete geometric realization of such product. 
For compact K\"ahler manifold the Berezin-Toeplitz star product
was introduced in \cite{KS01,Schlich:00}.
For general compact symplectic manifolds this was realized in \cite{MM07,MM08b}
by using Toeplitz operators obtained by projecting on the kernel of 
the Dirac operator.
Due to Theorem \ref{t-comp} we can also define an associative star-product
on the set $M(0)$ where a holomorphic line bundle $L\to M$ is positive,
namely by setting for any $f,g\in\cC^\infty_0(M(0))$,
\begin{equation}\label{toe4.4}
f*g:=\sum_{k=0}^\infty C_k(f,g) \hbar^{k}\in\cC^\infty(X)[[\hbar]].
\end{equation}
\end{rem}

\section{Calculation of the leading coefficients 
}\label{s-gue140729}

In this section,  
we will give formulas for the top coefficients of the expansion \eqref{e-gue140521I} 
in the case $q=0$, cf.\ Theorem~\ref{t-gue140523I}. 
We introduce the geometric objects used in Theorem~\ref{t-gue140523I} below. 
Consider the $(1,1)$-form on $M$,
\begin{equation} \label{coeI} 
\omega:=\frac{\sqrt{-1}}{2\pi}R^L. 
\end{equation}
On $M(0)$ the $(1,1)$-form $\omega$ is positive and induces a Riemannian metric 
$g^{TM}_\omega(\cdot,\cdot)=\omega(\cdot,J\cdot)$.  
In local holomorphic coordinates $z=(z_1,\ldots,z_n)$, put
\begin{equation} \label{s1-e7} 
\begin{split}
&\omega=\sqrt{-1}\sum^n_{j,k=1}\omega_{j,k}dz_j\wedge d\ol z_k,\\
&\Theta=\sqrt{-1}\sum^n_{j,k=1}\Theta_{j,k}dz_j\wedge d\ol z_k. 
\end{split}
\end{equation}
We notice that $\Theta_{j,k}=\langle\,\frac{\pr}{\pr z_j}\,|\,\frac{\pr}{\pr z_k}\,\rangle$, $\omega_{j,k}=\langle\,\frac{\pr}{\pr z_j}\,|\,\frac{\pr}{\pr z_k}\,\rangle_{\omega}$, $j,k=1,\ldots,n$.
Put 
\begin{equation} \label{s1-e8}
h=\left(h_{j,k}\right)^n_{j,k=1},\ \ h_{j,k}=\omega_{k,j},\ \ j, k=1,\ldots,n,
\end{equation}
and $h^{-1}=\left(h^{j,k}\right)^n_{j,k=1}$, $h^{-1}$ is the inverse matrix of $h$. The complex Laplacian with respect to $\omega$ is given by 
\begin{equation} \label{sa1-e9} 
\triangle_{\omega}=(-2)\sum^n_{j,k=1}h^{j,k}\frac{\pr^2}{\pr z_j\pr\ol z_k}\cdot
\end{equation} 
We notice that $h^{j,k}=\langle\,dz_j\,|\,dz_k\,\rangle_{\omega}$, $j, k=1,\ldots,n$. Put 
\begin{equation} \label{s1-e10} 
\begin{split}
&V_\omega:=\det\left(\omega_{j,k}\right)^n_{j,k=1},\\
&V_\Theta:=\det\left(\Theta_{j,k}\right)^n_{j,k=1}
\end{split}
\end{equation}
and set  
\begin{equation} \label{sa1-e11} 
\begin{split}
&r=\triangle_{\omega}\log V_\omega,\\
&\hat r=\triangle_{\omega}\log V_\Theta.
\end{split}
\end{equation} 
$r$ is called the scalar curvature with respect to $\omega$. Let $R^{\det}_\Theta$ be the 
curvature of the canonical line bundle $K_M=\det\Lambda^{1,0}(T^*M)$ 
with respect to the real two form $\Theta$. 
We recall that 
\begin{equation} \label{sa1-e12}
R^{\rm det\,}_\Theta=-\ddbar\pr\log V_\Theta.
\end{equation} 

Let $\nabla^{TM}_\omega$ 
be the Levi-Civita connection on $(M(0),g^{TM}_\omega)$, 
$R^{TM}_\omega=(\nabla^{TM}_\omega)^2$ its curvature.
Let $h$ be as in \eqref{s1-e8}. Put 
$\theta=h^{-1}\pr h=\left(\theta_{j,k}\right)^n_{j,k=1}$, $\theta_{j,k}\in \Lambda^{1,0}(T^*M)$, $j,k=1,\ldots,n$.
$\theta$ is the Chern connection 
matrix with respect to $\omega$. Then, 
\begin{equation} \label{sa1-e13}
\begin{split}
&R^{TM}_{\omega}=\ddbar\theta=\left(\ddbar\theta_{j,k}\right)^n_{j,k=1}=
\left(\mathcal{R}_{j,k}\right)^n_{j,k=1}
\in\cC^\infty(M,\Lambda^{1,1}(T^*M)\otimes{\rm End\,}(T^{1,0}M)),\\
&R^{TM}_{\omega}(\ol U, V)\in {\rm End\,}(T^{1,0}M),\ \ \forall U, V\in T^{1,0}M,\\
&R^{TM}_\omega(\ol U,V)\xi=\sum^n_{j,k=1}\langle\,
\mathcal{R}_{j,k}\,|\,\ol U\wedge V\,\rangle\,\xi_k\frac{\pr}{\pr z_j},\ \ \xi=
\sum^n_{j=1}\xi_j\frac{\pr}{\pr z_j},\ \ U, V\in T^{1,0}M.
\end{split}
\end{equation} 
We denote by $\langle\,\cdot\,,\cdot\,\rangle_{\omega}$ the pointwise 
Hermitian metrics induced by $g^{TM}_\omega$ on $\Lambda^{p,q}(T^*M)\otimes\Lambda^{r,s}(T^*M)$, 
$p, q, r, s\in\set{0,1,\ldots,n}$, and by $\abs{\,\cdot\,}_{\omega}$ the corresponding norms.

Set 
\begin{equation} \label{sa1-e14} 
\abs{R^{TM}_{\omega}}^2_{\omega}:=
\sum^n_{j,k,s,t=1}\abs{\langle\,R^{TM}_{\omega}(\ol e_j,e_k)e_s\,|\,e_t\,\rangle_{\omega}}^2,
\end{equation} 
where $e_1,\ldots,e_n$ is an orthonormal frame for $T^{1,0}M$ with respect to 
$\langle\,\cdot\,,\cdot\,\rangle_{\omega}$. 
It is straightforward to see that the definition of $\abs{R^{TM}_{\omega}}^2_{\omega}$ 
is independent of the choices of orthonormal frames. 
Thus, $\abs{R^{TM}_{\omega}}^2_{\omega}$ is globally defined. 
The Ricci curvature with respect to $\omega$ is given by 
\begin{equation} \label{sa1-e15}
{\rm Ric\,}_{\omega}:=-\sum^n_{j=1}\langle\,R^{TM}_\omega(\cdot,e_j)\cdot\,|\,e_j\,\rangle_\omega,
\end{equation}
where $e_1,\ldots,e_n$ is an orthonormal frame for $T^{1,0}M$ with respect to 
$\langle\,\cdot\,,\cdot\,\rangle_{\omega}$. That is, 
\[\langle\,{\rm Ric\,}_{\omega}\,|\, U\wedge V\,\rangle=
-\sum^n_{j=1}\langle\,R^{TX}_\omega(U,e_j)V\,|\,e_j\,\rangle_\omega,\ \ 
U, V\in TM\otimes_\Real\Complex.\]
${\rm Ric\,}_{\omega}$ is a global $(1,1)$ form.

Let 
\begin{equation} \label{sa1-e15-I}
D^{1,0}:\cC^\infty(M,\Lambda^{1,0}(T^*M))\To\cC^\infty(M,\Lambda^{1,0}(T^*M)\otimes\Lambda^{1,0}(T^*M))
\end{equation}
be the $(1,0)$ component of the Chern connection on $\Lambda^{1,0}(T^*M)$ induced 
by $\langle\,\cdot\,,\cdot\,\rangle_\omega$. That is, in local coordinates $z=(z_1,\ldots,z_n)$, put 
\[A=\left(a_{j,k}\right)^n_{j,k=1},\ \ a_{j,k}=\langle\,dz_k\,|\,dz_j\,\rangle_\omega,\ \ j,k=1,\ldots,n,\] 
and set 
\begin{equation} \label{s1-e15-Ibis}
\mathcal{A}=A^{-1}\pr A=\left(\alpha_{j,k}\right)^n_{j,k=1},\ \ \alpha_{j,k}\in\Lambda^{1,0}(T^*M),\ \ j,k=1,\ldots,n.
\end{equation}
Then, for $u=\sum^n_{j=1}u_jdz_j\in\cC^\infty(M,\Lambda^{1,0}(T^*M))$, we have 
\[D^{1,0}u=\sum^n_{j=1}\pr u_j\otimes dz_j+
\sum^n_{j,k=1}u_j\alpha_{k,j}\otimes dz_k\in\cC^\infty(M,\Lambda^{1,0}(T^*M)\otimes\Lambda^{1,0}(T^*M)).\] 

\begin{thm}\label{t-gue140523I}
With the assumptions and notations used in Theorem~\ref{t-gue140521}, the coefficients 
$b_{f,1}(x,x)$ and $b_{f,2}(x,x)$  in the expansion \eqref{e1.2} for 
$q=0$ have the following form: for every $x\in D_0$, 
\begin{equation} \label{s1-e16-II} 
b_{f,1}(x,x)=(2\pi)^{-n}f(x)\det\dot{R}^L(x)\Bigr(\frac{1}{4\pi}\widehat r-\frac{1}{8\pi} r\Bigr)(x)
+(2\pi)^{-n}\det\dot{R}^L(x)\Bigr(-\frac{1}{4\pi}\triangle_{\omega}f\Bigr)(x),
\end{equation} 
\begin{equation} \label{s1-e16-III}
\begin{split}
b_{f,2}(x,x)&=(2\pi)^{-n}f(x)\det\dot{R}^L(x)\Bigr(\frac{1}{128\pi^2}r^2-
\frac{1}{32\pi^2}r\widehat r+\frac{1}{32\pi^2}(\widehat r)^2
-\frac{1}{32\pi^2}\triangle_{\omega}\widehat r\\
&\quad-\frac{1}{8\pi^2}\abs{R^{\det}_\Theta}^2_{\omega}+\frac{1}{8\pi^2}\langle\,
{\rm Ric\,}_{\omega}\,|\,R^{\det}_\Theta\,\rangle_{\omega}+\frac{1}{96\pi^2}\triangle_{\omega}r-
\frac{1}{24\pi^2}\abs{{\rm Ric\,}_{\omega}}^2_{\omega}\\
&\quad+\frac{1}{96\pi^2}\abs{R^{TX}_{\omega}}^2_{\omega}\Bigr)(x)+
(2\pi)^{-n}\det\dot{R}^L(z)\Bigr(\frac{1}{16\pi^2}(\triangle_{\omega}f)(-\widehat r+\frac{1}{2}r)\\
&\quad-\frac{1}{4\pi^2}\langle\,\ddbar\pr f\,|\,R^{\det}_\Theta\,\rangle_{\omega}+
\frac{1}{8\pi^2}\langle\,\ddbar\pr f\,|\,{\rm Ric\,}_{\omega}\,\rangle_{\omega}+
\frac{1}{32\pi^2}\triangle^2_{\omega}f\Bigr)(x).
\end{split}
\end{equation}
\end{thm}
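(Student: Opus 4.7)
The plan is to reduce the computation of $b_{f,1}(x,x)$ and $b_{f,2}(x,x)$ to the stationary phase expansion of the integral appearing in Lemma~\ref{l-gue140603I}, and then to exploit the universality of the resulting coefficients in order to transport the formulas from the compact K\"ahler positive setting, where they were computed by Ma-Marinescu \cite{MM12} and Hsiao \cite{Hsiao09}.

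Fix $p\in D_0\Subset M(0)$ and choose holomorphic coordinates centered at $p$ satisfying \eqref{e-gue140521II}, with $\lambda_j>0$ for $j=1,\ldots,n$. By Lemma~\ref{l-gue140603I} (and the intermediate estimates of Section~\ref{s-gue140508}), the coefficients $b_{f,j}(x,x)$ are precisely those produced by applying the Melin-Sj\"ostrand stationary phase method to
\begin{equation*}
\int S_k(x,z)\,\tau(z)\,f(z)\,S_k(z,y)\,dv_M(z),\qquad S_k(x,y)=e^{ik\Psi(x,y)}b(x,y,k).
\end{equation*}
The critical point in $z$ lies at $z=x$ when $x=y$, and non-degeneracy of the quadratic part of $\Psi(x,z)+\Psi(z,y)$ at this critical point follows from \eqref{s3-e16-bisbg} together with the positivity of $\dot R^L$ on $M(0)$. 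The stationary phase expansion gives each $b_{f,j}(x,x)$ as a universal polynomial in a \emph{finite} jet at $x$ of the local weight $\phi$, of the Hermitian metric $\Theta$, and of $f$.

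The second step is the universality reduction. Any admissible finite jet of $(\phi,\Theta)$ at a point $p$, with $\dot R^L(p)$ positive definite, is realized as the jet at some point of a compact K\"ahler manifold $(\widetilde M,\widetilde\omega)$ equipped with a positive line bundle $(\widetilde L,h^{\widetilde L})$ (for instance by gluing the local Bochner-type model of \cite{MM12} into a standard compact positive background). In such a setting $M(0)=\widetilde M$, the Kodaira Laplacian $\Box^{(0)}_k$ has a spectral gap \cite[Theorem~1.5.5]{MM07}, hence $\cE^0_{k^{-N}}(\widetilde M,\widetilde L^k)=H^0(\widetilde M,\widetilde L^k)$ for $k$ large, and $T^{(0),f}_{k,k^{-N}}$ coincides with the classical Berezin-Toeplitz operator. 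The coefficients $b_{f,1}(x,x)$ and $b_{f,2}(x,x)$ in this case have been calculated in \cite{MM12} and \cite{Hsiao09}, and yield exactly the right-hand sides of \eqref{s1-e16-II}--\eqref{s1-e16-III}. Since the expansion coefficients only depend on a finite jet of the local data, the same formulas hold in our setting.

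The main obstacle is to make the universality reduction fully rigorous: one must track explicitly which finite-order jets of $\phi$, $\Theta$, $f$ enter $b_{f,j}(x,x)$ for $j=1,2$, and verify the realizability of such jets in the compact K\"ahler positive case. A computationally heavier but self-contained alternative avoids universality altogether: substitute the explicit expansions of $\Psi$ from Theorem~\ref{t-gue140523} (equations \eqref{s3-e16-bisbg} and \eqref{s3-eIV}) and of the symbol $b$ from \cite{HM12} into the integrand, expand to the required Taylor order, compute the complex Hessian determinant along the critical manifold, and carry out the stationary phase expansion term by term to order $k^{n-2}$. This mirrors the computation of \cite{MM12,Hsiao09} essentially verbatim, and yields \eqref{s1-e16-II}--\eqref{s1-e16-III} directly.
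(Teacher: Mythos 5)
Your primary argument (universality/jet transport) is genuinely different from the paper's. The paper computes: it applies H\"ormander's stationary phase formula \cite[Theorem 7.7.5]{Hor03} to $\int S_k(0,z,k)f(z)S_k(z,0,k)\,dv_M(z)$ in the special coordinates \eqref{coecaluI}, derives in Theorem~\ref{s4-tII} an explicit closed formula for $b_{f,j}(0)$ in terms of $\triangle_0^\nu$ acting on products $\phi_1^\mu V_\Theta f\,b_s(0,z)b_t(z,0)$, then substitutes the diagonal jets of $b_0,b_1,b_2$ computed in \cite[Section 4.5]{HM12} and ``repeats the procedure in \cite[Section 4]{Hsiao09}.'' Your ``computationally heavier alternative'' is essentially this route.

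Your universality reduction is conceptually attractive and is even alluded to by the authors in the introduction as a ``remarkable manifestation of universality,'' but they do not make it into a proof, and to do so you would need to supply two nontrivial ingredients that the current sketch leaves implicit. First, you must verify that each $b_{f,j}(x,x)$ is a \emph{universal local} expression in a finite jet of $(\phi,\Theta,f)$ at $x$; this reduces to locality of the $b_j$ from \cite{HM12} plus the finite-order dependence visible in H\"ormander's $L_j$ operators, and is true, but must be stated as such. Second, and more delicately, the jet-realizability step needs a concrete gluing construction: given an $N$-jet at a point of a positive $(1,1)$-form $\frac{i}{2\pi}\partial\bar\partial\phi$ and an independent Hermitian $\Theta$, you must produce a compact projective manifold with a positive line bundle and a $J$-compatible metric whose jets match at a point (note that \cite{MM12} does permit an arbitrary $J$-compatible $g^{TM}$, so the two independent jets can indeed be prescribed, but the global extension requires care — e.g.\ cut-off modifications on $\mathbb P^n$ that preserve positivity). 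Finally, your argument silently relies on the fact that the coefficients $b_j(x,x)$ of the \emph{spectral-projector} kernel from \cite{HM12} coincide with the Bergman kernel coefficients of \cite{DLM04a,MM07}; this is a theorem of \cite{HM12} (not a tautology), and it is also precisely what the paper's computational route relies on. With these three points made explicit, your universality argument would be a valid and cleaner alternative to the paper's calculation.
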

The formulas given in Theorem \ref{t-gue140523I} simplify if we assume that $\omega=\Theta$.
In this case we have $V_\omega=V_\Theta$ and $r=\widehat{r}$. See also 
\cite[Section\,2.7]{MM11}, \cite[Remark\,0.5]{MM12},
concerning the calculation of the coefficients for an arbitrary underlying Hermitian metric $\Theta$.

Let $q=0$ and let 
\[S_k(x,y)=e^{ik\Psi(x,y)}b(x,y,k)\] 
be as in \eqref{e-gue140509}. Note that $b(x,y,k)\in S^{n}(1;D_0\times D_0)$,
\[b(x,y,k)\sim\sum^\infty_{j=0}b_{j}(x,y)k^{n-j}\text{ in }S^{n}(1;D_0\times D_0),\]
where $b_{j}(x,y)\in\cC^\infty(D_0\times D_0)$, $j\in\N$, and
$b_{0}(x,x)=(2\pi)^{-n}\big|\det\dot{R}^L(x)\big|$. We have 
\begin{equation}\label{e-gue140731}
\int S_k(x,z,k)f(z)S_k(z,y,k)dv_M(z)\equiv e^{ik\Psi(x,y)}b_f(x,y,k)\mod O(k^{-\infty})
\end{equation}
locally uniformly on $D_0\times D_0$, where $b_f(x,y,k)\in S^{n}(1;D_0\times D_0)$,
\begin{equation}\label{s1-e5}
b_f(x,y,k)\sim\sum^\infty_{j=0}b_{f,j}(x,y)k^{n-j}\text{ in }S^{n}(1;D_0\times D_0),
\end{equation}
where $b_{f,j}(x,y)\in\cC^\infty(D_0\times D_0)$, $j\in\N$\,, and
$b_{0,f}(x,x)=(2\pi)^{-n}f(x)\big|\det\dot{R}^L(x)\big|$. In this section, we will calculate 
$b_{1,f}(x,x)$ and $b_{2,f}(x,x)$, $x\in D_0$. Fix $p\in D_0$. 
In a small neighbourhood of the point $p$ there exist local coordinates 
$z=(z_1,\ldots,z_n)=x=(x_1,\ldots,x_{2n})$, $z_j=x_{2j-1}+ix_{2j}$, 
$j=1,\ldots,n$, and a local frame $s$ of $L$, $\abs{s}_{h}^2=e^{-2\phi}$ so that 
\begin{equation} \label{coecaluI}
\begin{split}
&z(p)=0,\\
&\phi(z)=\sum^n_{j=1}\lambda_j\abs{z_j}^2+\phi_1(z),\\
&\phi_1(z)=O(\abs{z})^4),\ \ \frac{\pr^{\abs{\alpha}+
\abs{\beta}}\phi_1}{\pr z^\alpha\pr\ol z^\beta}(0)=0\ \ 
\text{for $\alpha, \beta\in\N^n$, $\abs{\alpha}\leq1$ or $\abs{\beta}\leq1$}\,,\\
&\Theta(z)=\sqrt{-1}\sum^n_{j=1}dz_j\wedge d\ol z_j+O(\abs{z}).
\end{split}
\end{equation} 
Until further notice, we work with this local coordinates $x$ and we identify $p$ 
with the point $x=z=0$. It is well-known (see~\cite[Section 4.5]{HM12}) that for every $N\in\N$, we have
\begin{equation} \label{coecaluXI} 
\begin{split}
\Psi(z,0)=i\phi(z)+O(\abs{z}^{N})\,,\:\:\Psi(0,z)=i\phi(z)+O(\abs{z}^{N}).
\end{split}
\end{equation}
We have
\begin{equation} \label{s4-eI}
\begin{split}
&\int S_k(0,z,k)f(z)S_k(z,0,k)dv_M(z)\\
&=\int_{D_0}e^{ik(\Psi(0,z)+\Psi(z,0))}b(0,z,k)b(z,0,k)f(z)V_\Theta(z)d\lambda(z)+r_k,
\end{split}
\end{equation}
where $d\lambda(z)=2^ndx_1dx_2\cdots dx_{2n}$, $dv_M(z)=V_\Theta(z)d\lambda(z)$ and 
\[\lim_{k\to\infty}\frac{r_k}{k^N}=0,\ \ \forall N\geq0.\] 
We notice that since $b(z,w,k)$ is properly supported, we have 
\begin{equation} \label{s4-eII}
b(0,z,k)\in\cC^\infty_0(D_0),\ \ b(z,0,k)\in\cC^\infty_0(D_0).
\end{equation}
We recall the stationary phase formula of H\"{o}rmander (see~\cite[Theorem 7.7.5]{Hor03}). 
\begin{thm} \label{s4-tI} 
Let $K\subset D$ be a compact set and $N$ a positive integer. If $u\in\cC^\infty_0(K)$, 
$F\in\cC^\infty(D)$ and ${\rm Im\,}F\geq0$ in $D$, 
${\rm Im\,}F(0)=0$, $F'(0)=0$, ${\rm det\,}F''(0)\neq0$, $F'\neq0$ in $K\setminus\set{0}$ then 
\begin{equation} \label{s4-eIII} 
\begin{split}
&\abs{\int e^{ikF(z)}u(z)V_\Theta(z)d\lambda(z)-2^ne^{ikF(0)}{\rm det\,}
\left(\frac{kF''(0)}{2\pi i}\right)^{-\frac{1}{2}}\sum_{j<N}k^{-j}L_ju} \\
&\quad\leq Ck^{-N}\sum_{\abs{\alpha}\leq 2N}\sup{\abs{\pr^\alpha_x u}},\ \ k>0,
\end{split}
\end{equation} 
where $C$ is uniform when $F$ runs in a relatively compact set of $\cC^\infty(D)$, 
$\frac{\abs{x}}{\abs{F'(x)}}$ has a uniform bound and
\begin{equation} \label{s4-eIV} 
L_ju=\sum_{\nu-\mu=j}\sum_{2\nu\geq 3\mu}i^{-j}2^{-\nu}
\langle F''(0)^{-1}D,D\rangle^\nu\frac{(h^\mu V_\Theta u)(0)}{\nu!\mu!}.
\end{equation} 
Here 
\begin{equation} \label{s4-eV} 
h(x)=F(x)-F(0)-\frac{1}{2}\langle F''(0)x,x\rangle
\end{equation} 
and $D=\begin{pmatrix} -i\pr_{x_1} \\ \vdots  \\ -i\pr_{x_{2n}} \end{pmatrix}$.
\end{thm}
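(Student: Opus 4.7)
The plan is to follow the classical proof of H\"{o}rmander's stationary phase lemma, which has three ingredients: (i) localization away from the critical point via non-stationary phase, (ii) reduction of the near-critical integral to a finite sum of Gaussian integrals by Taylor expansion of the non-quadratic part of the phase, and (iii) explicit asymptotics of Gaussian integrals via Fourier analysis.

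For the localization, I choose a cutoff $\chi\in\cC^\infty_0$ with $\chi\equiv 1$ in a small neighbourhood of $0$ and ${\rm Supp\,}\chi\subset K$, and split $u=\chi u+(1-\chi)u$. On ${\rm Supp\,}((1-\chi)u)$ the hypothesis $F'\neq 0$ in $K\setminus\set{0}$ together with the bound on $\abs{x}/\abs{F'(x)}$ gives a uniform positive lower bound for $\abs{F'}$, so the first-order operator $L=(ik)^{-1}\sum_j\overline{\pr_{x_j}F}\abs{F'}^{-2}\pr_{x_j}$ satisfies $L(e^{ikF})=e^{ikF}$. Iterating the formal adjoint $L^*$ and using $\abs{e^{ikF}}\leq 1$ (from ${\rm Im\,}F\geq 0$) yields
\[\abs{\int e^{ikF}(1-\chi)u\,V_\Theta\,d\lambda}\leq C_Nk^{-N}\sum_{\abs{\alpha}\leq N}\sup\abs{\pr^\alpha u}\]
for every $N$, with $C_N$ controlled by $\cC^N$-norms of $F$; this handles the non-stationary piece.

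For the piece near $0$, Taylor's theorem gives $F(x)=F(0)+Q(x)+h(x)$ with $Q(x)=\tfrac{1}{2}\langle F''(0)x,x\rangle$ and $h(x)=O(\abs{x}^3)$, exactly the $h$ of \eqref{s4-eV}. Using the elementary inequality $\abs{e^{iz}-\sum_{\mu<M}(iz)^\mu/\mu!}\leq \abs{z}^M/M!$ (valid for ${\rm Im\,}z\geq 0$) with $z=kh(x)$, I expand $e^{ikh(x)}$ into $M$ main terms plus a remainder of size $\leq Ck^M\abs{x}^{3M}$, producing main contributions
\[I_\mu=\frac{(ik)^\mu}{\mu!}\int e^{ikF(0)+ikQ(x)}h(x)^\mu(\chi uV_\Theta)(x)\,d\lambda(x),\qquad\mu=0,\dots,M-1.\]
The analytic core is then the Gaussian asymptotic: for complex symmetric $A$ with ${\rm Im\,}A\geq 0$, $\det A\neq 0$, and $v\in\cC^\infty_0$,
\[\int e^{ik\langle Ax,x\rangle/2}v(x)\,d\lambda(x)=2^n\det\bigl(\tfrac{kA}{2\pi i}\bigr)^{-1/2}\sum_{\nu<N}\frac{k^{-\nu}}{\nu!}\bigl(\tfrac{1}{2i}\langle A^{-1}D,D\rangle\bigr)^\nu v(0)+R_N(v,k),\]
with $\abs{R_N(v,k)}\leq C_Nk^{-N}\sum_{\abs{\alpha}\leq 2N}\sup\abs{\pr^\alpha v}$. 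I would derive this by Fourier inversion: the Fourier transform of $e^{ik\langle Ax,x\rangle/2}$ is an explicit complex Gaussian in $\xi$, Parseval pairs $\widehat v$ against $e^{-i\langle A^{-1}\xi,\xi\rangle/(2k)}$ (times the prefactor $2^n\det(kA/(2\pi i))^{-1/2}$), Taylor expansion of the second exponential in $1/k$ produces the main terms, and repeated integration by parts in $\xi$ estimates the tail by $2N$ derivatives of $v$. Once this is available, the $r_M$-remainder above contributes $O(k^{M-3M/2})=O(k^{-M/2})$ (the Gaussian localizes $x$ to scale $k^{-1/2}$), so choosing $M\geq 2N$ absorbs it.

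Applying the Gaussian asymptotic with $v=h^\mu(\chi uV_\Theta)/\mu!$ in each $I_\mu$, the fact that $h^\mu v$ vanishes to order $3\mu$ at $0$ forces $\langle A^{-1}D,D\rangle^\nu(h^\mu v)(0)=0$ whenever $2\nu<3\mu$, which is the origin of the constraint $2\nu\geq 3\mu$. Collecting contributions of total order $k^{\mu-\nu}=k^{-j}$ with $j=\nu-\mu$, and combining the factors $(ik)^\mu$, $(1/(2i))^\nu$, and $1/(\mu!\nu!)$, produces exactly \eqref{s4-eIV}. The main obstacle is the Gaussian asymptotic in the degenerate case where ${\rm Im\,}A$ is only positive semidefinite: the integral is not absolutely convergent and the Fourier-transform formula must be justified. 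I would deal with this by the classical regularization $A\rightsquigarrow A+i\epsilon I$, $\epsilon>0$, deriving the expansion for ${\rm Im\,}A>0$ where everything converges absolutely, and passing to $\epsilon\downarrow 0$ using oscillatory-integral bounds uniform in $\epsilon$ (obtained from integration by parts in $\xi$); the branch of $\det(kA/(2\pi i))^{-1/2}$ is the one selected by this limit. Uniformity of $C$ as $F$ ranges over a bounded set of $\cC^\infty(D)$ follows by tracking the dependence of every estimate on $\|F\|_{\cC^{2N+2}}$, on a lower bound for $\abs{\det F''(0)}$, and on the assumed bound for $\abs{x}/\abs{F'(x)}$.
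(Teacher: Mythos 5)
The paper offers no proof of this statement at all: it is quoted (up to the volume factor $V_\Theta$ and the normalization $d\lambda=2^n\,dx_1\cdots dx_{2n}$) from H\"ormander, and the text simply cites \cite[Theorem 7.7.5]{Hor03}. Your proposal is a faithful reconstruction of the proof in that reference: localization by complex non-stationary phase, the quadratic-phase expansion via Parseval together with the regularization $A\rightsquigarrow A+i\epsilon I$, Taylor expansion of the cubic remainder of the phase, and the bookkeeping producing \eqref{s4-eIV} with the constraint $2\nu\geq 3\mu$ (from $h$ vanishing to third order at $0$) are exactly H\"ormander's steps; your computation of the coefficient $i^{-j}2^{-\nu}/(\mu!\nu!)$ from $(ik)^\mu$, $(2i)^{-\nu}$ and $k^{\mu-\nu}=k^{-j}$ is correct.

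Two steps need more care than you give them. First, the bound $\abs{e^{iz}-\sum_{\mu<M}(iz)^\mu/\mu!}\leq\abs{z}^M/M!$ requires $\mathrm{Im}\,z\geq0$, and $\mathrm{Im}\,h\geq0$ does \emph{not} follow from the hypotheses; only $\mathrm{Im}(Q+h)\geq0$ does, where $Q(x)=\tfrac12\langle F''(0)x,x\rangle$. The fix is to use the integral form of the Taylor remainder, which produces the phases $Q+th$, $t\in[0,1]$: since $0$ minimizes $\mathrm{Im}\,F$ one has $\mathrm{Im}\,F''(0)\geq0$, hence $\mathrm{Im}\,Q\geq0$, hence $\mathrm{Im}(Q+th)=(1-t)\,\mathrm{Im}\,Q+t\,\mathrm{Im}(F-F(0))\geq0$ by convexity, and the remainder is pointwise $\leq(k\abs{h})^M/M!$. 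Second, your claim that this remainder contributes $O(k^{-M/2})$ because ``the Gaussian localizes $x$ to scale $k^{-1/2}$'' is literally valid only when $\mathrm{Im}\,F''(0)>0$ (which happens to hold in the paper's application, where $F''(0)$ is $i$ times a positive matrix); in the generality of the statement $F''(0)$ may be real, there is no Gaussian decay, and one must instead apply the complex non-stationary phase estimate (H\"ormander's Theorem 7.7.1) to the phases $Q+th$ with an amplitude vanishing to order $3M$ at $0$, using $\abs{Q'+th'}\gtrsim\abs{x}$ near $0$. Finally, as outlined your error term involves up to roughly $2N+2\mu$ derivatives of $u$ for $\mu$ up to $2N$, whereas the stated bound uses only $2N$; the sharp derivative count requires an extra argument of H\"ormander's, though this refinement is irrelevant for the smooth, compactly supported amplitudes to which the paper applies the theorem. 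None of this affects the correctness of your overall strategy.
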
 
\noindent
We now apply \eqref{s4-eIII} to the integral in \eqref{s4-eI}. Put 
\[F(z)=\Psi(0,z)+\Psi(z,0).\] 
From \eqref{coecaluI} and \eqref{coecaluXI}, we see that 
\begin{equation} \label{s4-eVI}
\begin{split}
&F(z)=2i\sum^n_{j=1}\lambda_j\abs{z_j}^2+2i\phi_1(z)+O(\abs{z}^N),\ \ \forall N\geq0,\\
&h(z)=2i\phi_1(z)+O(\abs{z}^N),\ \ \forall N\geq0,
\end{split}
\end{equation}
where $h$ is given by \eqref{s4-eV}. Moreover, we can check that 
\begin{equation} \label{s4-eVII}
{\rm det\,}\left(\frac{kF''(0)}{2\pi i}\right)^{-\frac{1}{2}}=
k^{-n}\pi^n2^{-n}\lambda^{-1}_1\lambda^{-1}_2\cdots\lambda_n^{-1}
=k^{-n}\pi^n\bigr(\det\dot{R}^L(0)\bigr)^{-1}
\end{equation}
and
\begin{equation} \label{s4-eVIII}
\begin{split}
\langle F''(0)^{-1}D,D\rangle=i\triangle_0\,,\qquad
\triangle_0=\sum^n_{j=1}\frac{1}{\lambda_j}\frac{\pr^2}{\pr\ol z_j\pr z_j}\,\cdot
\end{split}
\end{equation}
From \eqref{s4-eVI}, \eqref{s4-eVIII} and using that $h=O(\abs{z}^4)$, it is not 
difficult to see that  
\begin{equation} \label{s4-eIX} 
L_j(b(0,z,k)b(z,0,k)f)=\sum_{\nu-\mu=j}\sum_{2\nu\geq 4\mu}
(-1)^\mu2^{-j}\frac{\triangle_0^\nu\bigr(\phi_1^\mu V_\Theta b(0,z,k)b(z,0,k)f\bigr)(0)}{\nu!\mu!},
\end{equation} 
where $L_j$ is given by \eqref{s4-eIV}. We notice that 
\[b(0,z,k)\equiv\sum^\infty_{j=0}b_j(0,z)k^{n-j}\mod O(k^{-\infty}),\ \ 
b(z,0,k)\equiv\sum^\infty_{j=0}b_j(z,0)k^{n-j}\mod O(k^{-\infty}).\] 
From this observation, \eqref{s4-eIX} becomes: 
\begin{equation} \label{s4-eX} 
\begin{split}
&L_j\big(b(0,z,k)b(z,0,k)f\big)\\
&=\sum_{\nu-\mu=j}\sum_{2\nu\geq 4\mu}\sum_{0\leq s+t\leq N}(-1)^\mu2^{-j}
\frac{k^{2n-s-t}\triangle_0^\nu\bigr(\phi_1^\mu V_\Theta fb_s(0,z)b_t(z,0)\bigr)(0)}{\nu!\mu!}+O(k^{2n-N-1}),
\end{split}
\end{equation} 
for all $N\geq0$. From \eqref{s4-eX}, \eqref{s4-eVII}, \eqref{s4-eIII}, 
\eqref{s4-eI} and \eqref{e-gue140731}, we get 
\begin{equation} \label{s4-eXI}
\begin{split}
&b_f(0,0,k)=(2\pi)^n(\det\dot{R}^L(0))^{-1}\times\\
&\sum^N_{j=0}k^{n-j}\Bigr(\sum_{0\leq m\leq j}\sum_{\nu-\mu=m}
\sum_{2\nu\geq 4\mu}\sum_{s+t=j-m}(-1)^\mu2^{-m}
\frac{\triangle_0^\nu\bigr(\phi_1^\mu V_\Theta fb_s(0,z)b_t(z,0)\bigr)(0)}{\nu!\mu!}\Bigr)\\
&\quad+O(k^{n-N-1}),\ \ \forall N\geq0.
\end{split}
\end{equation} 
Combining \eqref{s4-eXI} with \eqref{s1-e5}, we obtain

\begin{thm} \label{s4-tII} 
The coefficients $b_{f,j}$ of the expansion \eqref{s1-e5} of $b_f(x,y,k)$, are given by 
\begin{equation} \label{s4-eXII}
\begin{split}
&b_{f,j}(0,0)\\
&=(2\pi)^n(\det\dot{R}^L(0))^{-1}\sum_{0\leq m\leq j}\,
\sum_{\nu-\mu=m}\,\sum_{2\nu\geq 4\mu}\,\sum_{s+t=j-m}(-1)^\mu2^{-m}
\frac{\triangle_0^\nu\bigr(\phi_1^\mu V_\Theta fb_s(0,z)b_t(z,0)\bigr)(0)}{\nu!\mu!},
\end{split}
\end{equation} 
for all $j=0,1,\ldots$\,. 
In particular, 
\begin{equation} \label{s4-eXIII}
b_{f,0}(0,0)=(2\pi)^n(\det\dot{R}^L(0))^{-1}f(0)b_0(0,0)^2,
\end{equation}
\begin{equation} \label{s4-eXIV}
\begin{split}
b_{f,1}(0,0)&=(2\pi)^n(\det\dot{R}^L(0))^{-1}\Bigr(2f(0)b_0(0,0)b_1(0,0)\\
&\quad+\frac{1}{2}\triangle_0\bigr(V_\Theta fb_0(0,z)b_0(z,0)\bigr)(0)
-\frac{1}{4}\triangle^2_0\bigr(\phi_1V_\Theta fb_0(0,z)b_0(z,0)\bigr)(0)\Bigr)
\end{split}
\end{equation}
and 
\begin{equation} \label{s4-eXV}
\begin{split}
b_{f,2}(0,0)&=(2\pi)^n(\det\dot{R}^L(0))^{-1}\Bigr(2f(0)b_0(0,0)b_2(0,0)+f(0)b_1(0,0)^2\\
&\quad+\frac{1}{2}\triangle_0\bigr(V_\Theta f(b_0(0,z)b_1(z,0)+b_1(0,z)b_0(z,0))\bigr)(0)\\
&\quad-\frac{1}{4}\triangle^2_0\bigr(\phi_1V_\Theta f(b_0(0,z)b_1(z,0)+b_1(0,z)b_0(z,0))\bigr)(0)\\
&\quad+\frac{1}{8}\triangle^2_0\bigr(V_\Theta fb_0(0,z)b_0(z,0)\bigr)(0)
-\frac{1}{24}\triangle^3_0\bigr(\phi_1V_\Theta fb_0(0,z)b_0(z,0)\bigr)(0)\\
&\quad+\frac{1}{192}\triangle^4_0\bigr(\phi_1^2V_\Theta fb_0(0,z)b_0(z,0)\bigr)(0)\Bigr).
\end{split}
\end{equation}
\end{thm}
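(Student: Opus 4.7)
The approach is a direct application of H\"ormander's stationary phase formula (Theorem~\ref{s4-tI}) to the composition integral in \eqref{s4-eI}. Since $\Psi(0,0)=0$ and \eqref{e-gue140731} identifies the integral $\int S_k(0,z,k)f(z)S_k(z,0,k)\,dv_M(z)$ with $b_f(0,0,k)$ modulo $O(k^{-\infty})$, the full asymptotic expansion of this oscillatory integral simultaneously encodes all the coefficients $b_{f,j}(0)$, which can then be read off by matching powers of $k$ against \eqref{s1-e5}.

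First, I would verify the hypotheses of Theorem~\ref{s4-tI} for the phase $F(z)=\Psi(0,z)+\Psi(z,0)$ and amplitude $b(0,z,k)\,b(z,0,k)\,f(z)\,V_\Theta(z)$. The relation $\Psi(x,y)=-\overline{\Psi(y,x)}$ from \eqref{prop_psi} gives $F=2i\,{\rm Im}\,\Psi(0,\cdot)$, so ${\rm Im}\,F\geq0$ and $F(0)=0$. Using \eqref{coecaluXI} together with the Morse-type expansion \eqref{s3-e16-bisbg} at $p$, one obtains $F(z)=2i\sum_{j=1}^n\lambda_j|z_j|^2+O(|z|^3)$, hence $F'(0)=0$ and $\det F''(0)\neq 0$. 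Since $b(0,z,k)$ and $b(z,0,k)$ are properly supported by \eqref{s4-eII}, the integrand is already cut off to a small neighborhood of $0$ where $F'$ vanishes only at the origin; the complement contributes $O(k^{-\infty})$.

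Next, I would carry out the algebraic simplification turning \eqref{s4-eIV} into \eqref{s4-eIX}. Direct computation from \eqref{coecaluI} and \eqref{coecaluXI} yields \eqref{s4-eVII}, \eqref{s4-eVIII}, and the identity $h(z)=2i\phi_1(z)+O(|z|^N)$ for every $N$. Substituting $\langle F''(0)^{-1}D,D\rangle^\nu=i^\nu\triangle_0^\nu$ and $h^\mu=(2i)^\mu\phi_1^\mu$ (modulo terms of sufficiently high order) into \eqref{s4-eIV} produces the sign factor
\[
i^{-j}\cdot 2^{-\nu}\cdot i^\nu\cdot (2i)^\mu=i^{-j+\nu+\mu}\,2^{\mu-\nu}=(-1)^\mu\,2^{-j},
\]
where the last equality uses $j=\nu-\mu$. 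Terms coming from the $O(|z|^N)$ remainder in $h^\mu$, as well as the leading $(2i\phi_1)^\mu$ term when $2\nu<4\mu$, vanish after $\triangle_0^\nu$ is evaluated at $0$ because $\phi_1^\mu=O(|z|^{4\mu})$; this is why the constraint $2\nu\geq 4\mu$ replaces the generic $2\nu\geq 3\mu$ of Theorem~\ref{s4-tI}. Expanding $b(0,z,k)\sim\sum_s b_s(0,z)k^{n-s}$ and $b(z,0,k)\sim\sum_t b_t(z,0)k^{n-t}$ and multiplying gives \eqref{s4-eX}; substituting into \eqref{s4-eIII} with the Hessian prefactor \eqref{s4-eVII} produces \eqref{s4-eXI}, and matching powers of $k^{n-j}$ against \eqref{s1-e5} yields the general formula \eqref{s4-eXII}.

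Finally, the explicit formulas \eqref{s4-eXIII}--\eqref{s4-eXV} follow by enumerating the admissible tuples $(m,\nu,\mu,s,t)$ with $0\leq m\leq j$, $\nu-\mu=m$, $\nu\geq 2\mu$, $s+t=j-m$, and using $V_\Theta(0)=1$ (from \eqref{coecaluI}, since $\Theta(0)=\sqrt{-1}\sum dz_j\wedge d\bar z_j$). For $j=0$ only $(0,0,0,0,0)$ is admissible, giving \eqref{s4-eXIII}. For $j=1$, the case $m=0$ contributes the symmetric pair $(s,t)\in\{(0,1),(1,0)\}$ equal to $2f(0)b_0(0,0)b_1(0,0)$, while $m=1$ yields the Laplacian term at $(\mu,\nu)=(0,1)$ with coefficient $+\tfrac12$ and the quartic term at $(\mu,\nu)=(1,2)$ with coefficient $-\tfrac14$, matching \eqref{s4-eXIV}. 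For $j=2$, the cases $m=0,1,2$ produce three, two, and three terms respectively; in particular $m=2$ with $\mu\in\{0,1,2\}$ introduces the $(\mu,\nu)=(2,4)$ term whose coefficient $\tfrac{1}{4\cdot 2\cdot 4!\cdot 2!}=\tfrac{1}{192}$ appears in \eqref{s4-eXV}. The main obstacle is not analytic but purely bookkeeping: correctly enumerating the constrained tuples and verifying the sign-and-factor identity $i^{-j+\nu+\mu}2^{\mu-\nu}=(-1)^\mu 2^{-j}$; both are routine but must be handled carefully to get \eqref{s4-eXV} exactly as stated.
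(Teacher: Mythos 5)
Your proposal is correct and follows essentially the same route as the paper: apply H\"ormander's stationary phase formula to the composition integral \eqref{s4-eI} with phase $F(z)=\Psi(0,z)+\Psi(z,0)$, use \eqref{coecaluXI} to get \eqref{s4-eVI}--\eqref{s4-eVIII} and $h=2i\phi_1+O(|z|^N)$, simplify $L_j$ via the identity $i^{-j}2^{-\nu}i^{\nu}(2i)^{\mu}=(-1)^{\mu}2^{-j}$ and the vanishing forced by $\phi_1^{\mu}=O(|z|^{4\mu})$, then expand the amplitudes and match powers of $k$. The only blemish is the intermediate expression $\tfrac{1}{4\cdot2\cdot4!\cdot2!}$ for the last coefficient, which should read $\tfrac{2^{-2}}{4!\,2!}=\tfrac{1}{192}$; the stated final value is nonetheless correct.
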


In~\cite[Section 4.5]{HM12}, we determined all the derivatives of $b_0(x,y)$, $b_1(x,y)$, $b_2(x,y)$ at $(0,0)$. 
From this observation and Theorem~\ref{s4-tII}, we can repeat the procedure in~\cite[Section 4]{Hsiao09} 
and obtain Theorem~\ref{t-gue140523I}. Since the calculation is the same, we omit the details.

Let 
$D^{1,0}:\cC^\infty(M,\Lambda^{1,0}(T^*M))\To\cC^\infty(M,\Lambda^{1,0}(T^*M)\otimes\Lambda^{1,0}(T^*M))$
be the $(1,0)$ component of the Chern connection on $\Lambda^{1,0}(T^*M)$ induced by 
$\langle\,\cdot\,,\cdot\,\rangle_\omega$ (see the discussion after \eqref{sa1-e15-I}).
From Theorem~\ref{t-gue140523I} and the proof of Theorem~\ref{t-gue140729II}, we can repeat the proof of 
\cite[Theorem\,1.5]{Hsiao09} and get the following (see also Ma-Marinescu~\cite{MM07} for another method).

\begin{thm} \label{s1-tmaincomp} 
With the notations as in Theorem~\ref{t-gue140729II}, let $q=0$. 
Then, for $b_{f,g,1}$, $b_{f,g,2}$ in \eqref{e-gue141118I}, we have  
\begin{equation} \label{e-compmainII} 
\begin{split}
b_{f,g,1}(x)=b_{fg,1}(x)+(2\pi)^{-n}\det\dot{R}^L(x)\Bigr(-\frac{1}{2\pi}\langle\,\pr f\,|\,\pr\ol g\,\rangle_\omega\Bigr)(x),
\end{split}
\end{equation} 
\begin{equation} \label{e-compmainIII}
\begin{split}
&b_{f,g,2}(x)=b_{fg,2}(x)\\
&\quad+(2\pi)^{-n}\det\dot{R}^L(x)\Bigr(-\frac{1}{4\pi^2}\langle\,\ddbar g\wedge\pr f\,|\,{\rm Ric\,}_\omega\,\rangle_\omega+\frac{1}{4\pi^2}\langle\,\ddbar g\wedge\pr f\,|\,R^{\det}_\Theta\,\rangle_\omega \\
&\quad+\frac{1}{8\pi^2}\langle\,\pr\triangle_\omega f\,|\,\pr\ol g\,\rangle_\omega
+\frac{1}{8\pi^2}\langle\,\ddbar\triangle_\omega g\,|\,\ddbar\,\ol f\,\rangle_\omega
-\frac{1}{8\pi^2}\langle\,D^{1,0}\pr f\,|\,D^{1,0}\pr\ol g\,\rangle_\omega\\
&\quad-\frac{1}{4\pi^2}\langle\,\ddbar\pr f\,|\,\ddbar\pr\ol g\,\rangle_\omega+
\frac{1}{8\pi^2}\langle\,\pr f\,|\,\pr\ol g\,\rangle_\omega(-\widehat r+\frac{1}{2}r)\Bigr)(x).
\end{split}
\end{equation}
\end{thm}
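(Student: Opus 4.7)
The plan is to adapt the stationary-phase derivation of Theorem~\ref{t-gue140523I} from Section~\ref{s-gue140729} to the composition integral arising in the proof of Theorem~\ref{t-gue140729II}. Fix $p\in D_0$ and work in the normalized coordinates \eqref{coecaluI} with $p\leftrightarrow 0$. Equation \eqref{e-gue140729aII} (applied with a cutoff $\tau_1\equiv 1$ near $p$) gives
\[
b_{f,g}(0,0,k)\equiv\int e^{ik(\Psi(0,z)+\Psi(z,0))}\,b_f(0,z,k)\,b_g(z,0,k)\,V_\Theta(z)\,d\lambda(z)\mod O(k^{-\infty}),
\]
where $b_f,b_g$ are the symbols from Theorem~\ref{t-gue140725}. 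Since the phase $F(z)=\Psi(0,z)+\Psi(z,0)$ and its Hessian are precisely those used in \eqref{s4-eVI}--\eqref{s4-eVIII}, Theorem~\ref{s4-tI} applies verbatim and produces the direct analogue of \eqref{s4-eXII},
\[
b_{f,g,j}(0)=(2\pi)^n(\det\dot{R}^L(0))^{-1}\!\!\sum_{\substack{0\leq m\leq j\\ \nu-\mu=m,\ 2\nu\geq 4\mu\\ s+t=j-m}}\!\!\frac{(-1)^\mu 2^{-m}}{\nu!\,\mu!}\,\triangle_0^\nu\bigl(\phi_1^\mu V_\Theta\,b_{f,s}(0,z)\,b_{g,t}(z,0)\bigr)(0),
\]
the only change from \eqref{s4-eXII} being that the triple $f\,b_s(0,z)\,b_t(z,0)$ is replaced by $b_{f,s}(0,z)\,b_{g,t}(z,0)$.

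Next I would substitute the Taylor expansions at $0$ of $b_{f,s}$ and $b_{g,t}$ for $s,t\leq 2$. These jets are furnished by the transport equations satisfied by $b_f$ and $b_g$; by Theorem~\ref{t-gue140725} those equations coincide with the ones for $b$, the only inputs distinguishing $b_f$ from $b$ being the seed values $b_{f,0}(x,x)=(2\pi)^{-n}f(x)|\det\dot R^L(x)|$ and analogously $b_{g,0}(x,x)=(2\pi)^{-n}g(x)|\det\dot R^L(x)|$. The required jets are extracted from~\cite[Section~4.5]{HM12}. Collecting the contributions to $b_{f,g,1}(0),b_{f,g,2}(0)$ according to how many derivatives land on $f$ versus on $g$, each splits into a \emph{product part}, in which $f,g$ enter only through $fg$ and its derivatives (via the Leibniz rule), and a \emph{mixed part}, in which at least one derivative lands on $f$ and a separate one lands on $g$. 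Applying Theorem~\ref{t-gue140523I} to the single function $fg$ identifies the product part with $b_{fg,1}(0)$ and $b_{fg,2}(0)$ respectively, which accounts for the first summands in \eqref{e-compmainII} and \eqref{e-compmainIII}.

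It then remains to evaluate the mixed part. For $j=1$ the only source is $\tfrac12\triangle_0\bigl(V_\Theta\,b_{f,0}(0,z)\,b_{g,0}(z,0)\bigr)(0)$, and a short Leibniz calculation in the coordinates \eqref{coecaluI} produces the correction $-\tfrac{1}{2\pi}\langle\,\pr f\,|\,\pr\ol g\,\rangle_\omega(0)$ of \eqref{e-compmainII}. For $j=2$ one sorts through the mixed pieces of $\triangle_0,\triangle_0^2,\triangle_0^3,\triangle_0^4$ in the sum; after identifying the resulting coordinate derivatives at the origin with the globally defined tensors $D^{1,0}\pr f$, $\ddbar\pr f$, $\triangle_\omega f$ and the curvature quantities ${\rm Ric}_\omega$, $R^{\det}_\Theta$ by means of the formulas in Section~\ref{s:prelim}, one obtains the interaction written in \eqref{e-compmainIII}.

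The main obstacle is computational rather than conceptual: the $j=2$ case requires the third-order jets of $\Psi$, $b_f$ and $b_g$ at the origin together with a careful passage from local-coordinate expressions to invariant tensors. Fortunately the algebra is formally identical to that performed in~\cite[Section~4]{Hsiao09} for the standard Toeplitz operator — the composition enters only through the substitution $(f,\,b_s b_t)\rightsquigarrow(1,\,b_{f,s}\,b_{g,t})$ in the stationary-phase expansion — so the computation can be completed by quoting that paper with the straightforward modification dictated by the displayed formula above.
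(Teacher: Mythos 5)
Your proposal follows the paper's approach exactly: the paper proves this theorem by citing Theorem 1.5 of \cite{Hsiao09} and noting that the proof there can be repeated using Theorem~\ref{t-gue140523I} and the structure established in Theorem~\ref{t-gue140729II}, which is precisely the stationary-phase-at-the-composition-integral strategy you outline. Your reconstruction of that argument (phase $F(z)=\Psi(0,z)+\Psi(z,0)$ identical to the single-$f$ case, amplitude $fb_s b_t$ replaced by $b_{f,s}b_{g,t}$, splitting the resulting expansion into a Leibniz part identified with $b_{fg,j}$ and a genuinely mixed part) is a faithful account of what \cite{Hsiao09} carries out, with the jets of $b_0,b_1,b_2$ supplied by \cite[Section 4.5]{HM12} as you note.

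One small point of wording: you attribute the jets of $b_f$ to "transport equations" that "coincide with the ones for $b$", but strictly speaking the jets of $b_{f,j}$ at the diagonal are obtained from the stationary-phase identity \eqref{s4-eXII} applied to $b$ and $f$, not from a separately derived transport equation for $b_f$ — the Toeplitz symbol $b_f$ is not a Bergman-type reproducing symbol, so it does not directly inherit the Bergman transport hierarchy. This does not affect the correctness of the argument, since you do in the end compute those jets from the stationary-phase formula, but it is worth stating precisely.
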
 
\begin{cor}\label{cor:C}
The coefficients $C_1(f,g)$ and $C_2(f,g)$ of the expansion
\eqref{0.2.1} of the composition $T^{(0),f}_{k,k^{-N}}\circ T^{(0),g}_{k,k^{-N}}$
of two Toeplitz operators are given by
\begin{equation}\label{e:c1}
C_1(f,g)=-\frac{1}{2\pi}\langle\,\pr f\,|\,\pr\ol g\,\rangle_\omega\,,
\end{equation}
\begin{equation}\label{e:c2}
C_2(f,g)=\frac{1}{8\pi^2}\langle\,D^{1,0}\pr f\,|\,D^{1,0}\pr\ol g\,\rangle_\omega+
\frac{1}{4\pi^2}\langle\,\overline\pr g\wedge\pr f\,|\,R^{det}_\Theta\,\rangle_\omega\,.
\end{equation}
\end{cor}
\begin{proof}
Formula \eqref{e:c1} follows from \eqref{e-compmainII}
and
\[b_{1,f,g} = b_{1,fg} + b_{0,C_1(f,g)}=b_{1,fg} +(2\pi)^{-n}(\det\dot{R}^L)C_1(f,g)\,,\] 
see \cite[(5.21)]{Hsiao09} or \cite[(5.76)]{MM12}.
Formula \eqref{e:c2} follows as in \cite[Section 5.3]{Hsiao09}.
\end{proof}
\section{Behavior on the degenerate set and the Weyl law}\label{s-gue140802}

In this section, we will prove Theorem~\ref{s1-maindege} and Theorem~\ref{t-gue140523III}. 
We recall first the following.
\begin{thm}[{\cite[Theorem 1.3]{HM12}}]\label{t-gue140731a}
Set
\[M_{\mathrm{deg}}=\set{x\in M;\, \mbox{$\dot{R^L}$ is degenerate at $x\in M$}}.\]
Then for every $x_0\in M_{\mathrm{deg}}$, $\varepsilon>0$, $N>1$ and every $m\in\{0,1,\dots,n\}$ , there 
exist a neighbourhood $U$ of $x_0$ and $k_0>0$, such that for all $k\geq k_0$ we have
\begin{equation} \label{s1-e3maina}
\abs{P^{(m)}_{k,k^{-N}}(x,x)}\leq \varepsilon k^n,\ \ x\in U.
\end{equation}
\end{thm}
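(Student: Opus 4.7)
The plan is to reduce the claim to a pointwise $o(k^n)$-bound at the degenerate point and then extract the rate by a semiclassical scaling around $x_0$, comparing the rescaled spectral projection with that of a model operator on $\mathbb{C}^n$. The intuition is that at $x_0 \in M_{\mathrm{deg}}$ one has $|\det\dot R^L(x_0)| = 0$, so the naive $k^n$ leading coefficient of the on-diagonal kernel of $P^{(m)}_{k,k^{-N}}$ (which, in the non-degenerate case covered by Theorem~\ref{t-gue140503I}, carries a factor $|\det \dot R^L|$) is forced to vanish; what remains must be controlled uniformly in a neighbourhood, which I would obtain from a compactness/subsequence argument applied to the pointwise limit along arbitrary sequences $x_k \to x_0$.

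First I would fix local holomorphic coordinates $z = (z_1,\ldots,z_n)$ centred at $x_0$ and a holomorphic frame $s$ of $L$ with $|s|^2_{h^L} = e^{-2\phi}$ and $\phi(z) = \sum_{j=1}^n \mu_j |z_j|^2 + O(|z|^3)$, where the $\mu_j$ are the eigenvalues of $\dot R^L(x_0)$; by hypothesis at least $d \geq 1$ of them vanish. Next, I would apply the standard $\sqrt{k}$-scaling $z = u/\sqrt{k}$: after conjugation, the rescaled localised Kodaira Laplacian converges in an appropriate sense to a model operator $\Box_\infty^{(m)}$ on $\mathbb{C}^n$ that is the orthogonal sum of harmonic oscillators in the $n - d$ non-degenerate directions and the free Laplacian $-\Delta_{\mathbb{R}^{2d}}$ in the $d$ degenerate ones. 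The same scaling converts the desired bound into a statement about the spectral projection of the rescaled operator on a window shrinking to $\{0\}$, since the spectral parameter $k^{-N}$ for $\Box_k^{(m)}$ corresponds to $k^{-N-1}$ for the rescaled operator. For the limit model, the spectral measure in the degenerate directions is absolutely continuous with density of order $\lambda^{d-1}$ near $\lambda = 0$ (by direct Fourier calculation for $-\Delta_{\mathbb{R}^{2d}}$), so the diagonal value $E_\infty^{(m)}([0,\lambda])(0,0) = O(\lambda^d) \to 0$ as $\lambda \to 0^+$. Plugging in $\lambda = k^{-N-1}$ and unscaling gives the desired $o(k^n)$.

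The hard part will be making the last step rigorous. Because $\Box_\infty^{(m)}$ has essential spectrum accumulating at $0$ when $d \geq 1$, naive norm-resolvent convergence does not control spectral projections on a window that shrinks to $\{0\}$. I would bypass this by dominating $\mathbf{1}_{[0,k^{-N}]}$ by a smooth cutoff $\chi_k(t) := \chi(k^N t)$ with $\chi \in \cC^\infty_0([-1,1])$, $0 \leq \chi$ and $\chi \geq \mathbf{1}_{[0,1]}$, so that $P^{(m)}_{k,k^{-N}} \leq \chi_k(\Box_k^{(m)})$ as operators and hence on the diagonal. The kernel of $\chi_k(\Box_k^{(m)})$ can then be represented by a Helffer--Sj\"ostrand formula with an almost-analytic extension of $\chi_k$; under the scaling this becomes an integral of the resolvent of the rescaled operator on a strip whose width matches the spectral window, to which norm-resolvent convergence to $\Box_\infty^{(m)}$ does apply, yielding agreement with the corresponding expression for $\Box_\infty^{(m)}$ up to $o(1)$, and the latter is controlled by the model computation above. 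Finally, the required uniformity on a neighbourhood $U$ of $x_0$ follows by shrinking $U$ so that the local eigenvalues $\mu_j(x)$ stay close to those at $x_0$, combined with the off-diagonal decay of the spectral kernel (as in Theorem~\ref{t-gue140503I}) to confine the relevant analysis to a $k^{-1/2}$-scale neighbourhood of each base point.
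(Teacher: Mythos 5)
First, note that the paper does not actually prove this statement: it is quoted verbatim as \cite[Theorem~1.3]{HM12}, so there is no in-paper argument to compare against; your proposal must therefore be judged on its own and against the proof in the cited reference.

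Your overall strategy (rescale at scale $k^{-1/2}$ around points $x_k\to x_0$, compare with the model operator whose degenerate directions carry a free Laplacian, use that the model's spectral density at energy $0^+$ vanishes, and get uniformity on $U$ by a subsequence/contradiction argument) is exactly the right one and is the strategy of [HM12]. The model computation is also correct: $E_{-\Delta_{\R^{2d}}}([0,\lambda])(0,0)=c_d\lambda^{d}\to0$. The genuine gap is in the step you yourself flag as the hard part. Dominating $\mathbf 1_{[0,k^{-N}]}$ by $\chi_k=\chi(k^N\cdot)$ and invoking Helffer--Sj\"ostrand does not close the argument: the almost-analytic extension of $\chi_k$ lives in a strip of width $\sim k^{-N}$ where the resolvents have norm $\sim k^{N}$ (and their difference is controlled only by $\delta_k/\lvert\mathrm{Im}\,w\rvert^{2}$), while the rescaled operator differs from the model only by $O(k^{-1/2})$ on compact sets. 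Optimizing the order of vanishing of $\bar\partial\widetilde\chi_k$ still leaves an error of size $k^{N}\delta_k\sim k^{N-1/2}\to\infty$, so the comparison of $\chi_k(\Box_{(k)})(0,0)$ with $\chi_k(\Box_\infty)(0,0)$ is not justified; norm-resolvent convergence simply cannot see a spectral window shrinking like $k^{-N}$ when the perturbation is only polynomially small of a lower order.

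The standard way to close this (and the route of the cited proof) avoids functional calculus on the shrinking window entirely. One uses the extremal characterization of the diagonal of the spectral projector, $\sum_{|J|=q} P^{(m),J,J}_{k,k^{-N}}(x,x)=\sup\{\,|u(x)|^2_{h^{L^k}}:\ u\in\cE^m_{k^{-N}}(M,L^k),\ \|u\|_{h^{L^k}}\le1\,\}$, together with the a priori bounds $\|(\Box^{(m)}_k)^ju\|\le k^{-jN}\|u\|$ for $u$ in the spectral space. After rescaling at $x_k\to x_0$, semiclassical G\aa{}rding inequalities give bounds in every $H^s_{\mathrm{loc}}$, so a subsequence converges in $\cC^\infty_{\mathrm{loc}}$ to a form $v$ with $\Box_\infty v=0$ and $\|v\|_{L^2(\C^n)}\le1$ globally (the $L^2(B_R)$ bounds hold for every $R$). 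Since the model at a degenerate point has a free direction, its global $L^2$ kernel is trivial, so $v=0$ and $k^{-n}|u_k(x_k)|^2\to0$; the uniform bound $\le\varepsilon k^n$ on a neighbourhood then follows from the contradiction argument you sketch in your first paragraph. With that replacement for the Helffer--Sj\"ostrand step, your proof is complete.
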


\begin{proof}[Proof of Theorem~\ref{s1-maindege}]
Fix  $x_0\in M_{\mathrm{deg}}$, $\varepsilon>0$ and $m\in\{0,1,\dots,n\}$. 
Let $U$ be a small neighbourhood of $x_0$ as in Theorem~\ref{t-gue140731a}. 
Let $p$ be any point of $U$ and let $s$ be a local section of $L$ defined in a small open set 
$D\Subset U$ of $p$, $\abs{s}^2_{h}=e^{-2\phi}$. 
Fix  $\abs{I_0}=\abs{J_0}=q$, $I_0$, $J_0$ are strictly increasing. 
Take $\set{\alpha_1(x),\alpha_2(x),\ldots,\alpha_{d_k}(x)}$ and 
$\set{\beta_1(x),\beta_2(x),\ldots,\beta_{d_k}(x)}$ be orthonormal frames 
for $\cE_{k^{-N}}(M,L^k)$ so that
\[\begin{split}
&\abs{\Td\alpha_{1,I_0}(p)e^{-k\phi(p)}}^2=\sum^{d_k}_{j=1}\abs{\Td\alpha_{j,I_0}(p)e^{-k\phi(p)}}^2,\\
&\abs{\Td\beta_{1,J_0}(p)e^{-k\phi(p)}}^2=\sum^{d_k}_{j=1}\abs{\Td\beta_{j,J_0}(p)e^{-k\phi(p)}}^2,\end{split}\] 
where $d_k\in\mathbb N\cup\set{\infty}$ and on $D$, we write
\[\begin{split}
&\alpha_j(x)=s^k(x)\Td\alpha_j(x),\ \ \mbox{$\Td\alpha_j(x)=
\sideset{}{'}\sum\limits_{\abs{J}=q}\Td \alpha_{j,J}(x)e^J(x)$ on $D$},\ \ j=1,\ldots,d_k,\\
&\beta_j(x)=s^k(x)\Td\beta_j(x),\ \ \mbox{$\Td\beta_j(x)=
\sideset{}{'}\sum\limits_{\abs{J}=q}\Td \beta_{j,J}(x)e^J(x)$ on $D$},\ \ j=1,\ldots,d_k.\end{split}\]
We have
\begin{equation}\label{e-gue140731a}
\begin{split}
T^{(q),f,I_0,J_0}_{k,k^{-N}}(p,p)&=
\sum\limits^{d_k}_{j,\ell=1}\Td\alpha_{j,I_0}(p)
e^{-k\phi(p)}(\,f\beta_\ell\,|\,\alpha_j\,)_{k}\ol{\Td\beta_{\ell,J_0}(p)}e^{-k\phi(p)}\\
&=\Td\alpha_{1,I_0}(p)e^{-k\phi(p)}(\,f\beta_1\,|\,\alpha_1\,)_{k}\ol{\Td\beta_{1,J_0}(p)}e^{-k\phi(p)}.
\end{split}
\end{equation}
From \eqref{e-gue140731a}, it is not difficult to see that 
\begin{equation}\label{e-gue140731aI}
\abs{T^{(q),f,I_0,J_0}_{k,k^{-N}}(p,p)}\leq\sup\set{\abs{f(x)};\, x\in M}\abs{P^{(q)}_{k,k^{-N}}(p,p)}.
\end{equation}
From \eqref{e-gue140731aI} and \eqref{s1-e3maina}, the theorem follows. 
\end{proof}

We now prove Theorem~\ref{t-gue140523III}. We introduce some notations. For $\lambda\geq0$, put 
\[\begin{split}
P^{(q)}_{k,0<\mu\leq\lambda}:=E(]0,\lambda]),\:\:
\cE^q_{0<\mu\leq\lambda}(M,L^k):={\rm Rang\,}E(]0,\lambda]).\end{split}\] 
Recall that $E$ denotes the spectral measure of $\Box^{(q)}_k$. 
Let $P^{(q)}_{k,0<\mu\leq\lambda}(\,\cdot\,,\cdot)$ be the Schwartz kernel of 
$P^{(q)}_{k,0<\mu\leq\lambda}$. The trace of $P^{(q)}_{k,0<\mu\leq\lambda}(x,x)$ is given by
\[{\rm Tr\,}P^{(q)}_{k,0<\mu\leq\lambda}(x,x):=
\sum^d_{j=1}\big\langle\,P^{(q)}_{k,0<\mu\leq\lambda}(x,x)\,e_{J_j}(x)\,|\,e_{J_j}(x)\,\big\rangle,\]
where $e_{J_1},\ldots,e_{J_d}$ is a local orthonormal basis of $\Lambda^{0,q}(T^*M)$ with respect to 
$\langle\,\cdot\,,\cdot\,\rangle$. Now, we assume that $M$ is compact. We need the following.
\begin{lem}\label{l-gue140731a}
There exists $C>0$ independent of $k$ such that 
\begin{equation}\label{e-gue140731aII}
\abs{T^{(q),f}_{k}(x,x)-T^{(q),f}_{k,k^{-N}}(x,x)}\leq 
Ck^{\frac{n}{2}}\sqrt{{\rm Tr\,}P^{(q)}_{k,0<\mu\leq k^{-N}}(x,x)},\ \ \forall x\in M.
\end{equation}
\end{lem}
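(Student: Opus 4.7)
The plan is to decompose the spectral projector onto its harmonic and non-harmonic parts, expand the Toeplitz kernel difference, and reduce everything to a Cauchy--Schwarz-type estimate on the convolution representation of a twisted projector product.

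Set $Q_k:=P^{(q)}_{k,0<\mu\leq k^{-N}}$. The orthogonal splitting $P^{(q)}_{k,k^{-N}}=P^{(q)}_{k,0}+Q_k$ gives
\begin{equation*}
T^{(q),f}_{k,k^{-N}}-T^{(q),f}_k \;=\; P^{(q)}_{k,0}\,f\,Q_k \;+\; Q_k\,f\,P^{(q)}_{k,0} \;+\; Q_k\,f\,Q_k,
\end{equation*}
so it suffices to prove the following auxiliary bound: for any pair of $L^2$-orthogonal projectors $R_1,R_2$ on $L^2_{(0,q)}(M,L^k)$ with smooth kernels,
\begin{equation*}
\abs{(R_1 f R_2)(x,x)} \;\leq\; \sup_M\abs{f}\,\sqrt{{\rm Tr\,}R_1(x,x)}\,\sqrt{{\rm Tr\,}R_2(x,x)}.
\end{equation*}

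To establish this, I would write $(R_1fR_2)(x,x)=\int_M R_1(x,z)\,f(z)\,R_2(z,x)\,dv_M(z)$ and view the integrand, for each fixed $z$, as a composition of linear maps between fibers of $T^{*0,q}M\otimes L^k$ (the factor $L^k_x\otimes(L^k_x)^*$ collapsing canonically to $\C$). Taking the Hilbert--Schmidt norm on $\End(T^{*0,q}_xM)$, submultiplicativity gives $\abs{R_1(x,z)R_2(z,x)}\leq\abs{R_1(x,z)}\,\abs{R_2(z,x)}$, and then Cauchy--Schwarz in $z$ yields
\begin{equation*}
\abs{(R_1fR_2)(x,x)} \;\leq\; \sup_M\abs{f}\,\Bigl(\int_M\abs{R_1(x,z)}^2dv_M\Bigr)^{\!1/2}\Bigl(\int_M\abs{R_2(z,x)}^2dv_M\Bigr)^{\!1/2}.
\end{equation*}
The identification with the trace follows from the standard reproducing identity: expanding $R$ in an $L^2$-orthonormal basis $\{\phi_\ell\}$ of its range gives $\int\abs{R(x,z)}^2dv_M(z)=\sum_\ell\abs{\phi_\ell(x)}^2={\rm Tr\,}R(x,x)$, and self-adjointness $R(z,x)=R(x,z)^*$ gives the analogous identity with the arguments swapped.

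Finally, one needs the uniform pointwise bound ${\rm Tr\,}P^{(q)}_{k,0}(x,x)\leq C k^n$ and ${\rm Tr\,}P^{(q)}_{k,k^{-N}}(x,x)\leq C k^n$ on the compact manifold $M$; the latter implies ${\rm Tr\,}Q_k(x,x)\leq C k^n$. These follow on neighborhoods inside $M(q)$ from Theorem~\ref{t-gue140503I}, on neighborhoods inside $M(j)$ with $j\neq q$ from the same theorem (with $N$ large), and on neighborhoods of points of $M_{\mathrm{deg}}$ from Theorem~\ref{t-gue140731a}; a finite cover of $M$ yields uniformity. Substituting, the two cross-terms each contribute at most $Ck^{n/2}\sqrt{{\rm Tr\,}Q_k(x,x)}$, while the diagonal term admits the cruder estimate $\abs{Q_kfQ_k(x,x)}\leq\sup\abs{f}\,{\rm Tr\,}Q_k(x,x)\leq\sup\abs{f}\sqrt{Ck^n}\sqrt{{\rm Tr\,}Q_k(x,x)}$, which has the same shape. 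Summing the three bounds produces \eqref{e-gue140731aII}.

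The main obstacle is precisely the uniform bound ${\rm Tr\,}P^{(q)}_{k,k^{-N}}(x,x)\leq Ck^n$ across all three regimes (non-degenerate with signature $q$, non-degenerate with a different signature, and degenerate), which must be patched together by compactness; once this input is assembled, the rest of the argument is an elementary Cauchy--Schwarz computation combined with the orthonormal-basis expansion of the projector kernels.
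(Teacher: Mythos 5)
Your proof is correct and follows essentially the same approach as the paper: both decompose $P^{(q)}_{k,k^{-N}}=P^{(q)}_{k,0}+Q_k$, split the Toeplitz kernel difference into the three cross terms, and bound each by $\sup|f|\cdot k^{n/2}\sqrt{\operatorname{Tr}Q_k(x,x)}$ via a Cauchy--Schwarz estimate coupled with the trace identity $\int_M\abs{R(x,z)}^2\,dv_M(z)=\operatorname{Tr}R(x,x)$ and the uniform bound $\operatorname{Tr}P^{(q)}_{k,k^{-N}}(x,x)\leq Ck^n$. The paper realizes the same inequality by choosing, at each point, extremal orthonormal bases of $\cE^q_0$ and of $\cE^q_{0<\mu\leq k^{-N}}$ so that only the first element of each has a nonzero given component there, which is simply a pointwise implementation of the Cauchy--Schwarz bound you prove abstractly; you are also more explicit than the paper about the need to patch the $O(k^n)$ on-diagonal bound across $M(q)$, $M(j)$, and $M_{\mathrm{deg}}$ by compactness, which the paper leaves implicit.
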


\begin{proof}
Let $p$ be any point of $M$ and let $s$ be a local trivializing holomorphic section of $L$ defined in a 
small open set $D\Subset U$ of $p$, $\abs{s}^2_{h}=e^{-2\phi}$. 
Fix $\abs{I_0}=\abs{J_0}=q$, $I_0$, $J_0$ are strictly increasing. 
Take $\set{\alpha_1(x),\alpha_2(x),\ldots,\alpha_{m_k}(x)}$, 
$\set{\beta_1(x),\beta_2(x),\ldots,\beta_{m_k}(x)}$ to be orthonormal frames for $\cE^q_{0}(M,L^k)$ 
and 
\[\set{\alpha_{m_k+1}(x),\alpha_{m_k+2}(x),\ldots,\alpha_{d_k}(x)},\ \ 
\set{\beta_{m_k+1}(x),\beta_{m_k+2}(x),\ldots,\beta_{d_k}(x)}\] 
to be orthonormal frames for $\cE^q_{0<\mu\leq k^{-N}}(M,L^k)$ so that
\[\begin{split}
&\abs{\Td\alpha_{1,I_0}(p)e^{-k\phi(p)}}^2=\sum^{m_k}_{j=1}\abs{\Td\alpha_{j,I_0}(p)e^{-k\phi(p)}}^2,\\
&\abs{\Td\alpha_{m_k+1,I_0}(p)e^{-k\phi(p)}}^2=\sum^{d_k}_{j=m_k+1}\abs{\Td\alpha_{j,I_0}(p)e^{-k\phi(p)}}^2,\\
&\abs{\Td\beta_{1,J_0}(p)e^{-k\phi(p)}}^2=\sum^{m_k}_{j=1}\abs{\Td\beta_{j,J_0}(p)e^{-k\phi(p)}}^2,\\
&\abs{\Td\beta_{m_k+1,J_0}(p)e^{-k\phi(p)}}^2=
\sum^{d_k}_{j=m_k+1}\abs{\Td\beta_{j,J_0}(p)e^{-k\phi(p)}}^2,\end{split}\] 
where $d_k\in\mathbb N\cup\set{\infty}$ and on $D$, we write
\[\begin{split}
&\alpha_j(x)=s^k(x)\Td\alpha_j(x),\ \ \mbox{$\Td\alpha_j(x)=\sideset{}{'}\sum\limits_{\abs{J}=q}\Td \alpha_{j,J}(x)e^J(x)$ 
on $D$},\ \ j=1,\ldots,d_k,\\
&\beta_j(x)=s^k(x)\Td\beta_j(x),\ \ \mbox{$\Td\beta_j(x)=\sideset{}{'}\sum\limits_{\abs{J}=q}\Td \beta_{j,J}(x)e^J(x)$ 
on $D$},\ \ j=1,\ldots,d_k.\end{split}\]
We have
\begin{equation}\label{e-gue140731aIII}
\begin{split}
&T^{(q),f,I_0,J_0}_{k}(p,p)\\
&=\Td\alpha_{1,I_0}(p)e^{-k\phi(p)}(\,f\beta_1\,|\,\alpha_1\,)_{k}\ol{\Td\beta_{1,J_0}(p)}e^{-k\phi(p)}\\
&=T^{(q),f,I_0,J_0}_{k,k^{-N}}(p,p)-\Td\alpha_{m_k+1,I_0}(p)e^{-k\phi(p)}
(\,f\beta_1\,|\,\alpha_{m_k+1}\,)_{k}\ol{\Td\beta_{1,J_0}(p)}e^{-k\phi(p)}\\
&\quad-\Td\alpha_{m_k+1,I_0}(p)e^{-k\phi(p)}
(\,f\beta_{m_k+1}\,|\,\alpha_{m_k+1}\,)_{k}\ol{\Td\beta_{m_k+1,J_0}(p)}e^{-k\phi(p)}\\
&\quad-\Td\alpha_{1,I_0}(p)e^{-k\phi(p)}(\,f\beta_{m_k+1}\,|\,\alpha_1\,)_{k}\ol{\Td\beta_{m_k+1,J_0}(p)}e^{-k\phi(p)}.
\end{split}
\end{equation}
From \eqref{e-gue140731aIII}, it is easy to see that 
\[
\abs{T^{(q),f,I_0,J_0}_{k}(p,p)-T^{(q),f,I_0,J_0}_{k,k^{-N}}(p,p)}\leq 
C_1k^{\frac{n}{2}}\sqrt{{\rm Tr\,}P^{(q)}_{k,0<\mu\leq k^{-N}}(p,p)},\]
where $C_1>0$ is a constant independent of $k$ and the point $p$. The lemma follows.
\end{proof}

\begin{proof}[Proof of Theorem~\ref{t-gue140523III}]
Since $M(q-1)=\emptyset$ and $M(q+1)=\emptyset$, it is known \cite[Corollary 1.4]{HM12}, that for every $N>1$, 
\begin{equation}\label{e-gue140731aV}
\dim\cE^{q-1}_{k^{-N}}(M,L^k)=o(k^n),\:\:
\dim\cE^{q+1}_{k^{-N}}(M,L^k)=o(k^n).
\end{equation}
Moreover, it is easy to see that 
\begin{equation}\label{e-gue140731aVI}
\dim \cE^{q}_{0<\mu\leq k^{-N}}(M,L^k)\leq \dim \cE^{q-1}_{k^{-N}}(M,L^k)+\dim \cE^{q+1}_{k^{-N}}(M,L^k).
\end{equation}
From \eqref{e-gue140731aV} and \eqref{e-gue140731aVI}, we have 
\begin{equation}\label{e-gue140802a}
\begin{split}
\int_M\sqrt{{\rm Tr\,}P^{(q)}_{k,0<\mu\leq k^{-N}}(x,x)}dv_M(x)&\leq C_0\sqrt{\int_M{\rm Tr\,}P^{(q)}_{k,0<\mu\leq k^{-N}}(x,x)dv_M(x)}\\
&=C_0\sqrt{\dim \cE^{q}_{0<\mu\leq k^{-N}}(M,L^k)}=o(k^{\frac{n}{2}}),
\end{split}
\end{equation}
where $C_0>0$ is a constant independent of $k$. From \eqref{e-gue140802a} and \eqref{e-gue140731aII}, we conclude that 
\begin{equation}\label{e-gue140802aI}
\mbox{$\lim_{k\To\infty}k^{-n}\abs{T^{(q),f}_k(x,x)-T^{(q),f}_{k,k^{-N}}(x,x)}=0$ in $L^1_{(0,q)}(M)$}.
\end{equation}
In view of Theorem~\ref{localmorse}, we see that 
\begin{equation} \label{e-gue140802aII}
\lim_{k\To\infty}\abs{k^{-n}T^{(q),f}_{k,k^{-N}}(x,x)-
(2\pi)^{-n}\abs{{\rm det\,}\dot{R}^L(x)}f(x)\mathds{1}_{M(q)}(x)I_{\det\ov{W}^{\,*}}(x)}=0
\end{equation} 
in $L^1_{(0,q)}(M)$.
From \eqref{e-gue140802aI} and \eqref{e-gue140802aII}, the theorem follows. 
\end{proof}

\medskip
\noindent
\textbf{\emph{Acknowledgments.}} We are grateful to the referee for several
suggestions which led to the improvement of the presentation.

\end{document}